\documentclass{amsart}
\usepackage[latin1]{inputenc}
\usepackage[english]{babel}
\usepackage{csquotes}

\usepackage{color} 
\usepackage[colorlinks=true,linktoc=all,linkcolor=blue,citecolor=red,filecolor=pink,urlcolor=blue]{hyperref}

\usepackage[hyperref=true,style=alphabetic,citestyle=alphabetic,backend=bibtex,maxnames=100,doi=false,isbn=false,url=false,giveninits=true]{biblatex}

\usepackage{quiver}
\usepackage{enumitem}
\usepackage{graphicx}

\usepackage[mathcal,mathscr]{euscript}

\usepackage{caption}
\usepackage{subcaption}
\usepackage{comment}
\usepackage{import}

\usepackage{amsmath, amssymb, amsfonts,mathabx}
\usepackage{tikz} 
\usetikzlibrary{knots}
\usepackage{pst-knot}
\usepackage{graphicx, overpic}
\usepackage{microtype}
\usepackage{MnSymbol,wasysym}

\usetikzlibrary{decorations.pathreplacing,decorations.markings}
\usetikzlibrary{patterns}
\usetikzlibrary{arrows.meta}
\usepackage{extarrows}
\usetikzlibrary{decorations.markings}

\tikzset{middlearrow/.style={
		decoration={markings,
			mark= at position 0.55 with {\arrow{#1}} ,
		},
		postaction={decorate}
	}
}

\theoremstyle{plain}                    
\newtheorem{theorem}{Theorem}[section]
\newtheorem{lemma}[theorem]{Lemma}
\newtheorem{proposition}[theorem]{Proposition}
\newtheorem{corollary}[theorem]{Corollary}

\newtheorem{problem}[theorem]{Problem}

\newcommand{\theoremnumber}{} 
\newtheorem*{maintheorem}{Theorem \theoremnumber}

\newcommand{\corollarynumber}{} 
\newtheorem*{maincorollary}{Corollary \corollarynumber}

\theoremstyle{definition}
\newtheorem{definition}[theorem]{Definition}
\newtheorem{example}[theorem]{Example}
\newtheorem{remark}[theorem]{Remark}

\numberwithin{equation}{section}

\usepackage{todonotes}

\usepackage[normalem]{ulem}

\newcommand{\nn}{\mathbb N}
\newcommand{\zz}{\mathbb Z}

\newcommand{\ff}{\mathbb F} 

\newcommand{\raag}[1]{A_{#1}} 
\newcommand{\bbg}[1]{BB_{#1}}
\newcommand{\vv}[1]{V(#1)} 
\newcommand{\ee}[1]{E(#1)} 
\newcommand{\lk}[2]{\operatorname{lk}\left( #1,#2 \right)} 
\newcommand{\induced}[2]{#1_{#2}}

\newcommand{\flag}[1]{\Delta_{#1}} 

\newcommand{\suppclique}[3]{K_{#1}(#2,#3)} 
\newcommand{\suppcliqueclique}[2]{K_{#1}(#2)} 
\newcommand{\dist}[3]{\operatorname{d}_{#1}(#2,#3)} 

\newcommand{\dualtree}[1]{{#1}^*}

\newcommand{\order}[1]{\left(\left( #1 \right)\right)}

\newcommand{\bbgm}[1]{O_{#1}}

\newcommand{\shadow}[1]{\Gamma_{#1}}

\makeatletter
\@namedef{subjclassname@2020}{\textup{2020} Mathematics Subject Classification}
\makeatother

\makeatletter
\newcommand{\subsectionnotoc}[1]{%
  \begingroup
  \let\@tocwrite\@gobbletwo
  \subsection*{#1}%
  \leavevmode
  \endgroup
}
\makeatother

\begin{document}

\title[Matroids and isomorphism problems for BBGs]{Matroids and isomorphism problems for Bestvina--Brady groups}

\author{Yu-Chan Chang}
\address{Department of Mathematics, The Ohio State University, 231 W. 18th Ave., Columbus, OH 43210, USA}
\email{chang.2628@osu.edu}

\author{Lorenzo Ruffoni}
\address{Department of Mathematics and Statistics, Binghamton University, Binghamton, NY 13902, USA}
\email{lorenzo.ruffoni2@gmail.com}

\subjclass[2020]{20F36, 20F65, 20F05, 05B35, 05C05, 05C25}
\keywords{Bestvina--Brady group; chordal graph; dually chordal graph; matroid; right-angled Artin group; tree clique-spanner.}

\begin{abstract}
We propose a factorization of the graph isomorphism problem for Bestvina--Brady groups (BBGs) through matroid theory.
In particular, we show that finitely presented BBGs depend on their defining graphs only through their cycle matroids.
On the other hand, we construct graphs of arbitrarily high connectivity such that they have non-isomorphic cycle matroids but their BBGs are isomorphic. 
To do so, we
prove that if a  graph   admits a tree clique-spanner, then the Dicks--Leary presentation of its BBG can be explicitly simplified to a right-angled Artin group presentation.
In particular, we show that BBGs defined by dually chordal graphs are right-angled Artin groups.
\end{abstract}

\maketitle

\tableofcontents

\section{Introduction}
In this paper, all graphs and matroids are finite unless otherwise stated.
Let $\Gamma$ be a   graph with vertex and edge sets denoted by $\vv \Gamma$ and $\ee\Gamma$, respectively.
When $\Gamma$ is simplicial, we denote by $\flag\Gamma$ the flag simplicial complex determined by $\Gamma$.
The \emph{right-angled Artin group (RAAG)} $\raag\Gamma$ associated with a simplicial graph $\Gamma$
is the group defined by the following finite presentation:
$$
\raag \Gamma=\big\langle \vv \Gamma \ \big\vert \ [v,w] \ \text{whenever} \ \{v,w\}\in \ee \Gamma\big\rangle.
$$
RAAGs are central objects of study in geometric group theory because they enjoy both a strikingly simple combinatorial definition and an abundance of interesting subgroups; see, for instance, \cite{CH07,KO22}.

Given a simplicial graph $\Gamma$, there is a canonical homomorphism $\chi\colon\raag \Gamma\to \zz$ sending each generator to $1$.
The \textit{Bestvina--Brady group} (BBG) associated with $\Gamma$ is defined to be the kernel of $\chi$: 
$$\bbg \Gamma = \ker (\chi\colon\raag \Gamma\to \zz).$$ 

BBGs were introduced in \cite{BB1997} and have traditionally been used as a source of examples of groups with exotic finiteness properties. 
When $\Gamma$ is connected, Dicks and Leary showed that a (possibly infinite) presentation of $\bbg \Gamma$ can be written down directly from the graph $\Gamma$ itself~\cite{DicksLeary99}.
If $\flag \Gamma$ is simply connected, then the Dicks--Leary presentation can be simplified to a finite one.
There is a growing body of literature devoted to computing algebraic and geometric invariants of BBGs in terms of the combinatorial properties of their defining graphs; see, for instance, \cite{PapadimaSuciuAlgebraicinvariantsforBBGs,DimacaPapadimaSuciuQuasiKahlerBBGs,PapadimaandSuciuBNSRinvariantsandHomologyJumpingLoci,LearySaadetogluTheCohomologyofBBGs,DavisOkun,lorenzo,kochloukovamendonontheBNSRsigmainvariantsoftheBBGs,ChangJSJofBBGs,DR22,BL25,CRKR25,SplittingBBG25, CR26,DehnFunc26,jialin}.
By \cite[Corollary 3.10]{CR26}, every RAAG is a BBG, so the study of BBGs can be regarded as a generalization of the study of RAAGs.
In the present work, we contribute to the literature by considering the following problems for BBGs.

\begin{problem}[Graph isomorphism problem for BBGs]\label{problem graph}
    Which  simplicial graphs define isomorphic BBGs?
\end{problem}
 
\begin{problem}[RAAG recognition problem for BBGs]\label{problem raag}
    Which  simplicial graphs define BBGs that are isomorphic to RAAGs?
\end{problem}

In this paper, we provide a factorization of Problem~\ref{problem graph} via matroid theory and a solution to Problem~\ref{problem raag} for dually chordal graphs, extending our previous work in \cite{CR26}. 
These two problems are related through a rigidity result of Droms relating RAAGs to their defining graphs~\cite{DromsIsomorphismsofGraphGroups}. 
We now provide some background on Problems~\ref{problem graph} and~\ref{problem raag} and describe our contributions in detail.
Readers interested only in RAAG recognition may safely skip the treatment of matroids in  \S\ref{sec:intro isom} and \S\ref{sec:matroids}.

\subsection{Graph isomorphism problem for BBGs}\label{sec:intro isom}

Droms showed that two RAAGs are isomorphic if and only if their defining graphs are isomorphic~\cite{DromsIsomorphismsofGraphGroups}.
This is perhaps one of the main features of RAAGs, as it establishes a connection between their algebraic properties and the combinatorial properties of their defining graphs.

On the other hand, BBGs are less rigid than RAAGs. For instance, the BBG defined by any tree with $n$ edges is isomorphic to $\ff_n$.
Similarly, the BBGs defined by the graphs in Figure~\ref{fig:noniso_2connected} are both isomorphic to the RAAG defined by the path on five vertices; see also~\cite[Example 3.7]{CR26}. The graphs in Figure~\ref{fig:noniso_2connected} are \textit{biconnected},
(that is, they are connected and have no cut vertices),
and one can be obtained from the other by flipping along the separating edge $\{u,v\}$.
This leads to Problem~\ref{problem graph}, which asks for a combinatorial characterization of the class of  simplicial graphs that define a given BBG.

\begin{figure}[h]
    \centering
    \begin{tikzpicture}[scale=0.6]
\draw [thick] (2,0)--(0,0);
\draw [thick] (2,0)--(2,2);
\draw [thick] (2,2)--(0,2);
\draw [thick] (0,0)--(0,2);
\draw [thick] (2,0)--(0,2);

\draw [thick] (2,0)--(4,0);
\draw [thick] (4,0)--(4,2);
\draw [thick] (4,2)--(2,2);
\draw [thick] (2,0)--(4,2);

\draw [fill] (0,0) circle [radius=0.1];
\draw [fill] (2,0) circle [radius=0.1];
\draw [fill] (2,2) circle [radius=0.1];
\draw [fill] (0,2) circle [radius=0.1];
\draw [fill] (4,0) circle [radius=0.1];
\draw [fill] (4,2) circle [radius=0.1];

\node [above] at (2,2) {$u$};
\node [below] at (2,0) {$v$};

\begin{scope}[shift={(7,0)}]

\draw [thick] (2,0)--(0,0);
\draw [thick] (2,0)--(2,2);
\draw [thick] (2,2)--(0,2);
\draw [thick] (0,0)--(0,2);
\draw [thick] (0,2)--(2,0);

\draw [thick] (2,0)--(4,0);
\draw [thick] (4,0)--(4,2);
\draw [thick] (4,2)--(2,2);
\draw [thick] (2,2)--(2,0);
\draw [thick] (2,2)--(4,0);

\draw [fill] (0,0) circle [radius=0.1];
\draw [fill] (2,0) circle [radius=0.1];
\draw [fill] (2,2) circle [radius=0.1];
\draw [fill] (0,2) circle [radius=0.1];
\draw [fill] (4,0) circle [radius=0.1];
\draw [fill] (4,2) circle [radius=0.1];

\node [above] at (2,2) {$u$};
\node [below] at (2,0) {$v$};
\end{scope}
\end{tikzpicture}
    \caption{Non-isomorphic biconnected graphs whose associated BBGs are isomorphic.}
    \label{fig:noniso_2connected}
\end{figure}

While the BBG defined by a tree is a free group, the presence of cycles in a graph gives rise to more complicated groups.
For example, if $\Gamma=C_n$ is a cycle of length $n\geq 4$, then $\bbg{C_n}$ is finitely generated but not finitely presented.
More generally, the topology of the flag complex $\flag \Gamma$ is reflected in the finiteness properties of  $\bbg \Gamma$; see \cite{BB1997}.

The starting point for this paper was the observation that the presentation for BBGs obtained by Dicks and Leary in \cite{DicksLeary99} seems to only depend on the structure of cycles in $\Gamma$ and not on $\Gamma$ itself.
More precisely, the generators are the oriented edges of $\Gamma$, and each cycle gives infinitely many relators; see  \S\ref{sec: presentation BBG} for details.

The structure of cycles of a graph is conveniently encoded in its so-called \textit{cycle matroid}. 
For example, the graphs in Figure~\ref{fig:noniso_2connected} have isomorphic cycle matroids.
More generally, a \textit{matroid} is a combinatorial structure consisting of a finite set $E$, called the \textit{ground set}, and a collection of subsets of $E$, called \textit{circuits}, 
such that this collection satisfies certain axioms generalizing the structure of cycles in a  graph. We refer the reder to~\cite{Oxley} for an introduction to matroids.
It is tempting to approach Problem~\ref{problem graph} through the corresponding isomorphism problem for matroids.

Since a circuit is an unordered subset of $E$, but a relator is a word in $E$, a choice of a cyclic ordering of the circuit is needed to write down the corresponding relator in a group presentation.
Matroids whose circuits can be consistently ordered are called \textit{orderable}; this notion was introduced by Crenshaw and Oxley~\cite{oxleycrenshaw}.
Since groups have inverses, we also need to equip the matroid with an \textit{orientation}. 
We therefore adapt the notion of orderability from matroids to \textit{oriented matroids}; see \cite{orientedmatroids, Laura2025} for an introduction to oriented matroids.

In \S\ref{sec:matroids}, we introduce the notion of an \textit{orderly matroid} $\mathfrak M$ and associate with it a group called the \textit{orderly matroid group} (OMG), denoted by $\bbgm{\mathfrak M}$; see Definition~\ref{def: orderly matroid} and Definition~\ref{def:OMG}, respectively.
Roughly speaking, an oriented matroid is orderly if it admits an ordering of its circuits that is compatible with its orientation. For example, the cycle matroid of a graph $\Gamma$ is orderly; we call it the \textit{orderly cycle matroid} of $\Gamma$ and denote it by $\mathfrak M_\Gamma$. 
The associated OMG is then defined by a group presentation analogous to the Dicks--Leary presentation for BBGs from \cite{DicksLeary99}:
the generators are the elements of the ground set $E$, and each circuit gives rise to a family of relators.

Some matroids are orderable but not orderly. For example, the uniform matroids $U_{2,n}$ for $n\geq 4$. Notice that $U_{2,n}$ is not \textit{graphic} for every $n\geq 4$, that is, it is not isomorphic to the cycle matroid of any graph.
We are not aware of an example of an orderly matroid that is not a graphic matroid.
Similarly, it is an open question whether a 3-connected orderable binary matroid is graphic (see \cite[Conjecture~4]{oxleycrenshaw}).
It is worth noting that
the notion of orderability does not pass to minors (see \cite[Example 15]{oxleycrenshaw}).

The next statement is our first main result. 
Note that RAAGs and BBGs can be defined for both simplicial and non-simplicial graphs; see Remark~\ref{remark:non simplicial RAAG BBG}.

\begin{theorem}\label{mainthm matroid intro}
Let $\Gamma$ be a connected graph,
$\mathfrak M_\Gamma$ its orderly cycle matroid, and  $\bbg \Gamma$ the BBG of $\Gamma$.
Let  $\bbgm{\mathfrak M}$ be the OMG of an orderly matroid $\mathfrak M$.
Then $\bbgm{\mathfrak M_\Gamma}\cong \bbg \Gamma$. That is, the following diagram commutes.
\vspace{-3em}
\begin{figure}[ht!]
\centering
\begin{tikzpicture}[mapsto/.style={|->},every node/.style={inner sep=2pt}]
\matrix(m)[matrix of math nodes,column sep=2.5em,row sep=1.8em] 
{
& & & & \\
& \left\{
\begin{gathered}
\text{connected}\\
\text{graphs}
\end{gathered}
\right\} & & \left\{
\begin{gathered}
\text{finitely}\\
\text{generated}\\
\text{groups}
\end{gathered}
\right\} & \\
& & & & \\
& & \left\{
\begin{gathered}
\text{orderly matroids}\\
\text{up to orientation}
\end{gathered}
\right\} & & \\
};
\draw[->] (m-2-2) -- node[midway,above] {$\Gamma\mapsto\bbg\Gamma$} (m-2-4);

\draw[->] (m-2-2) -- node[midway,below left] {$\Gamma\mapsto\mathfrak M_\Gamma$} (m-4-3);

\draw[->] (m-4-3) -- node[midway,below right] {$\mathfrak M\mapsto\bbgm{\mathfrak M}$} (m-2-4);
\end{tikzpicture}
\label{commu diag}
\end{figure}
\end{theorem}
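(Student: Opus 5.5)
The plan is to prove the statement by comparing presentations directly: the OMG $\bbgm{\mathfrak M_\Gamma}$ will turn out to be, essentially verbatim, the Dicks--Leary presentation of $\bbg\Gamma$ from \cite{DicksLeary99}.

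First I fix the data. Choose an orientation of every edge of $\Gamma$; this gives the ground set $E=\ee\Gamma$ of the oriented cycle matroid. The circuits are the edge sets of simple cycles of $\Gamma$. The orderly structure on $\mathfrak M_\Gamma$ assigns to each circuit $C$ the cyclic order in which a walk traverses the cycle. It assigns to each element of $C$ the sign $\pm1$ recording whether the fixed orientation of that edge agrees with the traversal direction. By Definition~\ref{def:OMG}, $\bbgm{\mathfrak M_\Gamma}$ then has generator set $E$. For each circuit, listed as $e_1^{\epsilon_1}\cdots e_n^{\epsilon_n}$ in this cyclic order with signs, its relators are the words $e_1^{k\epsilon_1}\cdots e_n^{k\epsilon_n}$ for $k\in\zz\setminus\{0\}$.

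Next I recall the Dicks--Leary presentation from \S\ref{sec: presentation BBG}. The generators are the oriented edges of $\Gamma$, with the reversed edge identified with the inverse. The relators are $e_1^k\cdots e_n^k$ for every directed cycle $(e_1,\dots,e_n)$ of $\Gamma$ and every $k\neq 0$. It is valid for connected $\Gamma$, and also for non-simplicial graphs via Remark~\ref{remark:non simplicial RAAG BBG}. Substituting the fixed orientations, $e_i^k$ becomes $e_i^{\epsilon_i k}$, so the Dicks--Leary relators are exactly the OMG relators. The identity map on $E$ therefore induces an isomorphism $\bbgm{\mathfrak M_\Gamma}\cong\bbg\Gamma$.

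The main obstacle is to check that this identification does not depend on choices, and that the relator families match exactly rather than up to conjugation. Specifically, I must verify three things. First, that Dicks--Leary's relators range over all closed walks, or only simple cycles, in a way matching the matroid circuits. If they use arbitrary closed edge paths, I would show that relators from non-simple closed walks are consequences of those from simple cycles, or else cite that Dicks--Leary already restrict to cycles. Second, that the two cyclic orderings of a circuit, corresponding to the two traversal directions, give relator sets that are inverses of cyclic conjugates of each other, hence define the same normal closure. Third, that changing the orientation of $\mathfrak M$, i.e.\ reorienting an edge, amounts to replacing a generator by its inverse, which is a Tietze transformation. This last point is what justifies the phrase ``up to orientation'' in the diagram. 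Loops and parallel edges, which appear in the non-simplicial case, give circuits of size $1$ and $2$, and I would check these separately. Once these points are settled, commutativity of the diagram follows.
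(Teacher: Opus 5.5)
Your proposal is correct and follows essentially the paper's route. Fix an orientation of $\Gamma$, use the relators $e\bar e$ to halve the Dicks--Leary generating set, and move the orientation into signed exponents; this gives exactly the OMG presentation. Orientation-independence comes from the Tietze move $e\mapsto e^{-1}$, which is the paper's Lemma~\ref{lem:reorientation}.

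The extra points you flag are routine bookkeeping that the paper leaves implicit: cyclic rotations, the two traversal directions giving inverse relators, and loops and parallel edges. The paper also cites \cite{BLV78} for well-definedness of the left arrow. Your observation that reorienting edges of $\Gamma$ is a matroid reorientation already suffices there.
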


In other words, the map  that sends a graph to the associated BBG factors through the orderly matroid of that graph.
This provides a factorization of the graph isomorphism problem for BBGs (Problem~\ref{problem graph}) through the corresponding isomorphism problem for orderly matroids and their OMGs.

One might hope that this factorization is complete, in the sense that, at least for finitely presented BBGs, two graphs define isomorphic BBGs if and only if their cycle matroids are isomorphic.
The following result establishes that this is not the case; that is, the map that sends an orderly matroid to its OMG fails to be injective in a quite structural way. 
A  graph is called \emph{$k$-connected} if it has more than $k$ vertices, and removing fewer than $k$ vertices does not disconnect the graph.

\begin{theorem}\label{introthm non isom}
    For every $k\geq 1$, there are $k$-connected chordal graphs $\Gamma$ and $\Lambda$ such that the following statements hold. 
    
    \begin{enumerate}
        \item $\Gamma\not \cong \Lambda$ (so in particular, $\mathfrak M_\Gamma \not \cong \mathfrak M_\Lambda$ when $k\geq 3$). 
        \item $\bbg \Gamma \cong \bbg \Lambda$ (or equivalently, $\bbgm{\mathfrak M_\Gamma} \cong \bbgm{\mathfrak M_\Lambda}$).
    \end{enumerate}
\end{theorem}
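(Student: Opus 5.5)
We construct $\Gamma$ and $\Lambda$ from a common base graph in which one vertex behaves like a dominating vertex of a large clique. For fixed $k\ge 1$, let $K$ be a clique on $k+1$ vertices. Build $\Gamma$ by gluing to $K$ two further cliques $K'$ and $K''$, each of size $k+1$, along $k$-cliques: $K'$ meets $K$ in a $k$-subclique $S'$ and $K''$ meets $K$ in a $k$-subclique $S''$. Build $\Lambda$ in the same way but with a different configuration of the attaching subcliques. In $\Gamma$, take $S'=S''$, so both extra cliques hang off the same $k$-face and form a ``fan''. In $\Lambda$, take $S'\neq S''$, so they form a ``path'' of three cliques.

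Both graphs are chordal, being built by gluing cliques along cliques, and both are $k$-connected, since every minimal separator is a $k$-clique. They are non-isomorphic. In $\Gamma$ some $k$-clique lies in three maximal cliques, while in $\Lambda$ every $k$-clique lies in at most two. This generalizes the $k=2$ example of Figure~\ref{fig:noniso_2connected}. For $k\ge 3$, Whitney's 2-isomorphism theorem says that a 3-connected graph is determined by its cycle matroid. So $\mathfrak M_\Gamma\not\cong\mathfrak M_\Lambda$, which gives the parenthetical in (1).

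For (2), the plan is to show that both BBGs are RAAGs with isomorphic defining graphs. We use the result announced in the abstract: if a graph admits a tree clique-spanner $T$, then $\bbg\Gamma\cong\raag{\Gamma'}$ for an explicit graph $\Gamma'$ obtained from $T$, essentially the ``edge graph'' of $T$ in which two edges of $T$ commute when they span a triangle of $\Gamma$.

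First, in each of $\Gamma$ and $\Lambda$, pick a vertex $v$ lying in all three maximal cliques. This is possible when all the attaching cliques share a common vertex; in $\Lambda$ we can arrange this by choosing $S'\cap S''\neq\emptyset$, which is automatic for $k\ge 2$. Take $T$ to be the star at $v$, which is a spanning tree because every vertex is adjacent to $v$. This star is a tree clique-spanner, since every clique containing $v$ is spanned by its star. The Dicks--Leary presentation then reduces to $\raag{\lk{v}{\Gamma}}$: the generators are the star edges $e_w$ for $w\neq v$, and $e_w$ commutes with $e_{w'}$ exactly when $w$ and $w'$ are adjacent. Indeed, for a dominating vertex $v$ one has $\bbg\Gamma\cong\raag{\Gamma\setminus v}$ by the standard cone argument, and this is also consistent with every RAAG being a BBG \cite[Corollary 3.10]{CR26}.

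So it suffices to choose the configurations so that $v$ is a cone point of both graphs and the graphs $\Gamma\setminus v$ and $\Lambda\setminus v$ are isomorphic, while $\Gamma\not\cong\Lambda$. Concretely, set $\Gamma=\{v\}*A$ and $\Lambda=\{w\}*B$, where the cone vertex is attached differently but the residual graphs agree. Droms' theorem \cite{DromsIsomorphismsofGraphGroups} then shows this is exactly what is needed.

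The main obstacle is that coning the same graph $A$ always gives isomorphic cones. So the cone-point approach alone cannot separate $\Gamma$ from $\Lambda$, and a genuinely non-star tree clique-spanner is required. My fallback is the Figure~\ref{fig:noniso_2connected} mechanism thickened. Take a $(k-1)$-clique $Q$, and let $\Gamma=Q*P$ and $\Lambda=Q*P'$, where $P$ and $P'$ are the two graphs of Figure~\ref{fig:noniso_2connected}. Joining with a clique raises connectivity by $k-1$, and the joins remain chordal. A join with a clique multiplies the RAAG part predictably: $\bbg{Q*P}$ should be computed from $\bbg P$ by a tree clique-spanner obtained as the union of a tree spanner of $P$ and a star into $Q$. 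This should give $\bbg{Q*P}\cong\raag{Q'*X}$ with $X$ depending only on $\bbg P\cong\raag{P_5}$, and hence $\bbg\Gamma\cong\bbg\Lambda$. Non-isomorphism follows because $Q$ is the set of universal vertices in each join, and removing it recovers $P\not\cong P'$. Verifying that this union is a tree clique-spanner and computing the resulting RAAG graph is the step I expect to require the real work.
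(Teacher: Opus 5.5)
\textbf{There is a genuine gap.} For every $k\ge 2$, both of your constructions give pairs of graphs whose BBGs are \emph{not} isomorphic. The reason is exactly the cone obstruction you flagged yourself.

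In the fallback, $Q\neq\emptyset$, so any $q\in Q$ is a universal vertex of both $Q*P$ and $Q*P'$. The cone formula \cite[Corollary 3.10]{CR26} then gives $\bbg{Q*P}\cong\raag{(Q\setminus q)*P}$ and $\bbg{Q*P'}\cong\raag{(Q\setminus q)*P'}$. The left graph $P$ of Figure~\ref{fig:noniso_2connected} is itself the cone over the path $P_5$, with apex $v$. The right graph $P'$ has no universal vertex. So the first group is $\zz^{k-1}\times\raag{P_5}$, whose center has rank $k-1$. The second is $\zz^{k-2}\times\raag{P'}$, whose center has rank $k-2$. Equivalently, apply Droms' theorem to the non-isomorphic base graphs.

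So your claim that $\bbg{Q*P}$ depends only on $\bbg P$ is false. A join with a nonempty clique is a cone, and the BBG of a cone remembers its base graph. A smaller slip: $Q$ is not the set of universal vertices of $Q*P$; that set is $Q\cup\{v\}$. Non-isomorphism of the graphs still follows by counting universal vertices.

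The three-clique construction fails the same way. For $k\ge 2$ the fan is $K_k*\overline{K_3}$ and the path is $K_{k-1}*P_4$: two cones with non-isomorphic bases. What survives is only $k\le 2$ (take $Q=\emptyset$, since $P$ and $P'$ are already $2$-connected), which is the previously known range. For $k\ge 3$, no Whitney-twist mechanism can help, because a twist needs a $2$-vertex cut and so cannot relate non-isomorphic $3$-connected graphs.

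\textbf{The missing idea.} At most one of the two graphs may be a cone. So you need a $k$-connected chordal graph with no universal vertex whose BBG is nevertheless a RAAG. The paper proceeds as follows:
\begin{enumerate}
\item Take $\Gamma$ obtained by gluing cliques along pairwise disjoint subcliques of size at least $k$, with at least two such separators. Its minimal separators are connected and pairwise disjoint.
\item Lemma~\ref{lem:disjoint_minsep_tcs} then yields a tree clique-spanner $T$, which is genuinely not a star. Theorem~\ref{mainthm:raag recognition} gives $\bbg\Gamma\cong\raag{\dualtree T}$.
\item Take $\Lambda$ to be the cone over $\dualtree T$, so $\bbg\Lambda\cong\raag{\dualtree T}$ as well.
\item The real work is Lemma~\ref{lem: dual graph of chordal is chordal}: $\dualtree T$ is chordal and $(k-1)$-connected, so $\Lambda$ is $k$-connected and chordal.
\item Finally, $\Gamma\not\cong\Lambda$ because two disjoint minimal separators rule out a universal vertex in $\Gamma$, while $\Lambda$ is a cone.
\end{enumerate}
Your chordality and connectivity arguments for joins, and your appeal to Whitney's theorem for the matroid statement, are fine; it is the group-isomorphism step that cannot be rescued within your framework.
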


The key to proving Theorem~\ref{introthm non isom} is that the graphs $\Gamma$ and $\Lambda$ are constructed specifically so that $\bbg \Gamma$ and  $\bbg \Lambda$ are both isomorphic to the same RAAG; see \S\ref{sec:intro raag}.

After defining OMGs and proving that the above diagram commutes, we investigate some of their structural properties. 
A subtle point is that the presentation of the OMG associated to an orderly matroid depends a priori on both the orientation of the matroid and the ordering of the circuits.
We address this in detail.

First of all, the OMG is completely insensitive to the reorientation; see Lemma~\ref{lem:reorientation}.
Its dependence on the ordering is more delicate.
For cycle matroids arising from simplicial graphs with simply connected flag complex, the choice of ordering is irrelevant; see Corollary~\ref{cor: s.c. OMG}. However, this is not true in general; see Remark~\ref{jialin}. This raises the question of how different orderings of the same matroids are related.

In \S\ref{sec: orderly graph}, we show how to use the ordering of an orderly matroid $\mathfrak M$ to define the so-called \textit{orderly graph} $\shadow{\mathfrak M}$, which provides a ``shadow'' of $\mathfrak M$.
For the orderly cycle matroid $\mathfrak M_\Gamma$ of a graph $\Gamma$, this construction recovers $\Gamma$, that is, $\shadow{\mathfrak M}\cong \Gamma$; see Lemma~\ref{lem:orderly graph of a graph}.
More generally, we show that the OMG of an orderly matroid $\mathfrak M$ surjects onto the BBG defined by a graph constructed from $\shadow{\mathfrak M}$; see Proposition~\ref{prop:AOMG=RAAG}.
In particular, the following result states that the OMG of an orderly graphic matroid is isomorphic to the corresponding BBG.

\begin{theorem}\label{thm:faithful}
Let $\mathfrak M=(E,\mathcal{C},\omega)$ be an orderly graphic matroid. 
Let $\mathfrak M_1,\dots,\mathfrak M_n$ be the connected components of $\mathfrak M$, and let $\shadow{\mathfrak M_i}$ be the orderly graph of $\mathfrak M_i$. 
    Then  $$\bbgm{\mathfrak M} \cong  \Asterisk_{i=1}^n \bbg{\shadow{\mathfrak{M}_i}}\cong \bbg{\bigvee_{i=1}^n \shadow{\mathfrak M_i}}.$$
\end{theorem}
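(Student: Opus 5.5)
The proof splits into three reductions. First, the OMG of a matroid splits as a free product over its connected components. Second, each connected orderly graphic matroid has the same OMG as the BBG of its orderly graph. Third, the free product of BBGs is the BBG of the wedge of the defining graphs.

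\textbf{Step 1: splitting over components.} The presentation of $\bbgm{\mathfrak M}$ from Definition~\ref{def:OMG} has the elements of $E$ as generators and relators indexed by circuits. Every circuit of $\mathfrak M$ lies in a single connected component $\mathfrak M_i$. The ordering and orientation of $\mathfrak M$ restrict to orderings and orientations of the $\mathfrak M_i$. So the generating set is the disjoint union of the ground sets $E_i$, and each relator involves the generators of only one $E_i$. This gives $\bbgm{\mathfrak M}\cong \Asterisk_i \bbgm{\mathfrak M_i}$ directly from the presentation.

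\textbf{Step 2: a single component.} Here I assume $\mathfrak M$ is connected and show $\bbgm{\mathfrak M}\cong \bbg{\shadow{\mathfrak M}}$.
\begin{itemize}
\item Since $\mathfrak M$ is graphic and connected, it is the cycle matroid of a biconnected graph $\Lambda$ (after removing isolated vertices; loops and coloops are the degenerate cases).
\item By Lemma~\ref{lem:reorientation}, reorientation does not change the OMG, so I may fix any convenient orientation.
\item Proposition~\ref{prop:AOMG=RAAG} gives a surjection from $\bbgm{\mathfrak M}$ onto the BBG of a graph built from $\shadow{\mathfrak M}$. The task is to show this surjection is injective.
\end{itemize}

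The key claim is that the given ordering on the circuits of $\mathfrak M$ is realized geometrically by some graph. That is, I want the ordered oriented matroid $\mathfrak M$ to coincide with the orderly cycle matroid $\mathfrak M_{\Lambda'}$ of a graph $\Lambda'$ with $\Lambda'\cong\shadow{\mathfrak M}$. Granting this, Lemma~\ref{lem:orderly graph of a graph} gives $\shadow{\mathfrak M}\cong \Lambda'$, and Theorem~\ref{mainthm matroid intro} gives $\bbgm{\mathfrak M}=\bbgm{\mathfrak M_{\Lambda'}}\cong \bbg{\Lambda'}$. This also explains why the answer is phrased through $\shadow{\mathfrak M}$ rather than through $\Lambda$: by Remark~\ref{jialin}, the ordering matters, and different orderings of the same matroid come from Whitney-flipped graphs.

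To prove the key claim I would reconstruct $\Lambda'$ from the ordering. Orderliness says that consecutive elements in the ordering of a circuit correspond to edges sharing a vertex, with the vertex determined consistently by the orientation. The vertices of $\shadow{\mathfrak M}$ are then the equivalence classes of such ``consecutive pair'' incidences. I would check two things:
\begin{itemize}
\item every element of $E$ becomes an edge with exactly two endpoints;
\item the cycles of $\shadow{\mathfrak M}$ are exactly the circuits of $\mathfrak M$, traversed in the given cyclic order.
\end{itemize}
I expect this verification to be the main obstacle. It needs graphicness, since the uniform matroids $U_{2,n}$ show orderability alone is insufficient. I would first try to argue via Whitney's 2-isomorphism theorem that any compatible ordering of a graphic matroid is the one induced by some graph in the 2-isomorphism class of $\Lambda$, by induction on 2-separations.

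\textbf{Step 3: the wedge.} The isomorphism $\Asterisk_i \bbg{\shadow{\mathfrak M_i}}\cong \bbg{\bigvee_i \shadow{\mathfrak M_i}}$ is standard. The Dicks--Leary presentation of a wedge has edge generators and cycle relators. Every cycle of a wedge lies in a single summand, so the presentation splits as a free product exactly as in Step 1. Alternatively, this follows from Step 1 applied to the cycle matroid of the wedge together with Theorem~\ref{mainthm matroid intro}.
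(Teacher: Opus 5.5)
Your Steps 1 and 3 match the paper. Step 2, as you set it up, has a gap: everything rests on the ``key claim'' that $\omega$ is the graphic ordering of some graph, and you do not prove it. You only say you would try an induction on 2-separations via Whitney's theorem.

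Under your reading of the hypothesis (the underlying matroid is graphic and $\omega$ is an arbitrary compatible ordering), this claim is exactly what the paper says is not known: ``it is not clear whether every compatible ordering is necessarily a graphic ordering, even when the underlying matroid is graphic.'' Your second verification bullet is precisely global compatibility (Definition~\ref{def:globally compatible}), which by Lemma~\ref{lem:faithful} is equivalent to $\omega$ being graphic. Compatibility gives no direct handle on it. It only constrains a consecutive pair of elements across circuits that contain both. An oriented cycle of $\shadow{\mathfrak M}$, by contrast, is assembled from consecutive pairs coming from different circuits, and there is no a priori single circuit containing all of them.

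Whitney's theorem does not help either. It classifies the graphs realizing the underlying matroid and says nothing about which abstract orderings are compatible. An induction on 2-separations would need two things you do not supply:
\begin{itemize}
\item a way to induce orderings on the pieces of a 2-sum, whose circuits through the virtual element are not circuits of $\mathfrak M$;
\item a 3-connected base case showing that compatibility forces a graphic ordering.
\end{itemize}
For the same reason, your remark that different orderings ``come from Whitney-flipped graphs'' is only established for globally compatible orderings (Proposition~\ref{prop:defining graphs and orderings}).

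The fix is to read the hypothesis as the paper does. ``Orderly graphic'' means $\mathfrak M\cong\mathfrak M_\Gamma$ \emph{as an orderly matroid} for some graph $\Gamma$, so $\omega=\omega_\Gamma$; see the proof of Lemma~\ref{lem:faithful}. Then your key claim is the hypothesis, and Step 2 closes in a few lines. Connectedness of $\mathfrak M$ lets you take $\Gamma$ biconnected. Lemma~\ref{lem:orderly graph of a graph} gives $\shadow{\mathfrak M}\cong\Gamma$. Theorem~\ref{mainthm matroid intro} then gives $\bbgm{\mathfrak M}\cong\bbg\Gamma\cong\bbg{\shadow{\mathfrak M}}$.

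That is a legitimate route, and shorter than the paper's. The paper instead proves that the canonical epimorphism $\phi$ of Proposition~\ref{prop:AOMG=RAAG} is injective. It does this by rerunning the Dicks--Leary semidirect-product argument inside $\bbgm{\mathfrak M}$, using global compatibility (Lemma~\ref{lem:faithful}) to make the path elements $p(v,w)$ well defined. The paper's route yields an explicit isomorphism, namely $\phi$ itself, and isolates global compatibility as the only property of $\omega$ actually used. Yours yields only an abstract isomorphism, but reduces the theorem to results already proved.
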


A graphic matroid can be isomorphic to the cycle matroid of many non-isomorphic graphs.
The point of Theorem~\ref{thm:faithful}  is that a specific graph is reconstructed canonically just from the ordering $\omega$ of $\mathfrak M$.
Indeed, we show that, for a graphic matroid, there is a bijection between the set of  defining graphs and a certain set of orderings; see Proposition~\ref{prop:defining graphs and orderings}.

\subsection{RAAG recognition problem for BBGs}\label{sec:intro raag}
Now, let $\Gamma$ be a simplicial graph.
As mentioned above, the flag complex $\flag \Gamma$ is simply connected if and only if $\bbg \Gamma$ is finitely presented; see \cite{BB1997}.
In this case, Papadima and Suciu showed that, given a spanning tree $T$ of $\Gamma$, one can obtain a finite presentation for $\bbg \Gamma$ whose generators are the edges of $T$ and whose relators are commutators, not necessarily commutators of generators; see \cite[Corollary 2.3]{PapadimaSuciuAlgebraicinvariantsforBBGs}.
Problem~\ref{problem raag} asks for conditions on a graph ensuring that the Papadima--Suciu presentation of a BBG can be simplified to a RAAG presentation, in which every relator is a commutator between generators.
It is worth noting that Bridson showed in \cite{Bridson2020}  that determining whether a group presented by commutators is a RAAG is an undecidable problem. Moreover, there are BBGs not isomorphic to any RAAG; see \cite[Example 2.8]{PapadimaSuciuAlgebraicinvariantsforBBGs}. 

In \cite[Theorem B]{CR26}, we showed that the Papadima--Suciu presentation associated with a spanning tree $T$ of $\Gamma$ can be simplified to a RAAG presentation whenever $T$ is a \textit{tree 2-spanner}.
This means that the distances in $T$ are at most twice the distances in $\Gamma$: for any $u,v\in \vv \Gamma$, we have $\dist T uv\leq 2 \dist \Gamma u v$; see the left-hand picture in Figure~\ref{fig:TCS} for an example. 

\begin{figure}[ht!]
    \centering
    \begin{tikzpicture}[scale=0.5]

\draw [thick] (0,3)--(3,3)--(1.5,0)--(0,3);
\draw [thick] (1.5,0)--(4.5,0)--(3,3);
\draw [thick] (3,3)--(6,3)--(4.5,0);
\draw [thick] (4.5,0)--(7.5,0)--(6,3);
\draw [thick, red] (0,3)--(1.5,0)--(3,3)--(4.5,0)--(6,3)--(7.5,0);
\draw [fill] (0,3) circle [radius=0.15];
\draw [fill] (1.5,0) circle [radius=0.15];
\draw [fill] (3,3) circle [radius=0.15];
\draw [fill] (4.5,0) circle [radius=0.15];
\draw [fill] (6,3) circle [radius=0.15];
\draw [fill] (7.5,0) circle [radius=0.15];

\begin{scope}[shift={(10,0)}]

\draw [thick] (0,3)--(3,3)--(1.5,0)--(0,3);
\draw [thick] (1.5,0)--(4.5,0)--(3,3);
\draw [thick] (3,3)--(6,3)--(4.5,0);
\draw [thick] (4.5,0)--(7.5,0)--(6,3);
\draw [domain=1.5:3.5, smooth, variable=\x, thick] plot ({\x}, {2*\x/3-1});
\draw [domain=4.05:6, smooth, variable=\x, thick] plot ({\x}, {2*\x/3-1});
\draw [thick, orange] (0,3)--(1.5,0)--(3,3)--(4.5,0)--(6,3)--(7.5,0);
\draw [fill] (0,3) circle [radius=0.15];
\draw [fill] (1.5,0) circle [radius=0.15];
\draw [fill] (3,3) circle [radius=0.15];
\draw [fill] (4.5,0) circle [radius=0.15];
\draw [fill] (6,3) circle [radius=0.15];
\draw [fill] (7.5,0) circle [radius=0.15];
\end{scope}
\end{tikzpicture}
    \caption{The spanning tree (red) in the left-hand picture is a tree $2$-spanner. The spanning tree (orange) in the right-hand picture is a tree clique-spanner but not a tree $2$-spanner.}
    \label{fig:TCS}
\end{figure}

We generalize our previous work~\cite{CR26} to a much wider class of spanning trees, which provides a new criterion for Problem~\ref{problem raag}.

\begin{theorem}\label{mainthm:raag recognition}
Let $\Gamma$ be a  simplicial graph. 
If $\Gamma$ admits a tree clique-spanner $T$, then $\bbg\Gamma$ is a RAAG.
More precisely, we have $\bbg \Gamma\cong\raag{\dualtree T}$, where $\dualtree T$ is the dual graph of $T$ in $\Gamma$. 
\end{theorem}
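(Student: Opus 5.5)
The plan is to start from the Dicks--Leary presentation of $\bbg\Gamma$ and simplify it by Tietze transformations, using $T$ to eliminate generators. I assume the definition of a tree clique-spanner is: a spanning tree $T$ such that for every edge $\{u,v\}\in\ee\Gamma$, the vertices of the $T$-geodesic from $u$ to $v$ span a clique of $\Gamma$. I also assume that in the dual graph $\dualtree T$, two edges of $T$ are adjacent exactly when their endpoints lie in a common clique of $\Gamma$. If the paper's definitions differ slightly, the argument below should be adapted accordingly.

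\emph{Step 1: $\flag\Gamma$ is simply connected.} The fundamental group of $\Gamma$ is generated by the fundamental cycles $c_e$, one for each edge $e=\{u,v\}\notin T$; $c_e$ is $e$ together with the $T$-path from $u$ to $v$. By hypothesis the vertices of $c_e$ span a clique, so $c_e$ lies in a simplex of $\flag\Gamma$ and is null-homotopic. Hence $\flag\Gamma$ is simply connected, and by \cite{DicksLeary99} (as recalled in \S\ref{sec: presentation BBG}) $\bbg\Gamma$ has the finite presentation below. Its generators are the oriented edges, with $\bar e=e^{-1}$. For each directed triangle $(a,b,c)$ it has relators $e_{ab}e_{bc}=e_{ac}=e_{bc}e_{ab}$.

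\emph{Step 2: eliminate non-tree edges.} Take a non-tree edge $e=\{u,v\}$ and let $t_1\cdots t_m$ be the $T$-path from $u$ to $v$. Inside the clique $K$ spanned by this path, repeated use of the triangle relations gives
\[
e_{uv}=t_1t_2\cdots t_m ,
\]
and shows that the $t_i$ pairwise commute. So $e_{uv}$ can be removed by a Tietze move. The remaining generators are the edges of $T$. Every pair of $T$-edges lying in a common clique acquires a commutation relation, and these are exactly the edges of $\dualtree T$.

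\emph{Step 3: show the remaining relators are consequences of these commutations.} This is the main obstacle. Consider any triangle $\{a,b,c\}$ of $\Gamma$. After substitution, its relators become identities among the words for the $T$-paths $a\to b$, $b\to c$, $a\to c$. The union of these three paths is the subtree $S$ of $T$ spanning $a,b,c$, and $S$ is a tripod (or a path). Here is how I would show $S$ lies in a single clique.
\begin{itemize}
\item Each leg of the tripod sits in a clique, because each of the three $T$-paths does.
\item I would then argue that these three cliques together span a clique. Take vertices $x,y$ on different legs and look at the paths from $x$ and $y$ to the centre. One can use the Helly property of subtrees of a tree, or an induction on the length of the $T$-path joining the endpoints of a $\Gamma$-edge, to produce an edge $\{x,y\}$.
\item If this fails in general, the fallback is to show directly that the needed commutations hold between consecutive legs, which is all the relator actually requires.
\end{itemize}
Once $S$ lies in a clique, the triangle relator reduces to a free reduction of the path words, modulo commutations between $T$-edges in a common clique. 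Those commutations are relations of $\raag{\dualtree T}$.

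\emph{Conclusion.} The presentation obtained is exactly the standard presentation of $\raag{\dualtree T}$. Therefore $\bbg\Gamma\cong\raag{\dualtree T}$, and in particular $\bbg\Gamma$ is a RAAG.
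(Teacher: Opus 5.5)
Your strategy is the same as the paper's. You use simple connectivity of $\flag\Gamma$ to pass to the finite Dicks--Leary presentation, add commutators of $T$-edges lying in a common clique, write each non-tree edge as the word along its $T$-path, eliminate the non-tree generators, and show that the substituted triangle relators are redundant. Steps~1 and~2 are fine. For an arbitrary pair of $T$-edges in a common clique, the commutation follows from the triangle relators of that clique exactly as in your $T$-path computation.

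The gap is Step~3, which you flag as the main obstacle and then leave as a plan: the Helly property, an unspecified induction, and a fallback ``if this fails''. Step~3 is the one place where the tree clique-spanner hypothesis is used beyond single edges. Without an argument that the edges of $S=[a,b,c]_T$ pairwise commute in $\raag{\dualtree T}$, you have not shown that the triangle relators follow from the commutators. Your fallback is not an alternative route, since commutation of edges on different legs is the very same fact.

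The missing observation takes one line. Let $o$ be the centre of the (possibly degenerate) tripod $S$. The union of any two legs is exactly one of the three geodesics, for example $[o,a]_T\cup[o,b]_T=[a,b]_T$. So any two vertices $x,y$ of $S$, whether on the same leg or on different legs, lie on a common geodesic among $[a,b]_T$, $[b,c]_T$, $[a,c]_T$. Since $\{a,b\},\{b,c\},\{a,c\}\in\ee\Gamma$, each of these geodesics spans a clique. Hence $x$ and $y$ are adjacent, and $S$ spans a clique; neither Helly nor induction is needed. This is the paper's Lemma~\ref{lem:0simplexinclique} specialized to a triangle. In general, a geodesic inside the convex hull of a clique extends to a geodesic between two leaves, and those leaves are vertices of the clique.

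With this, all edges of $S$ commute in $\raag{\dualtree T}$. Each edge of $S$ occurs once with each orientation in both substituted relators, so they cancel. In fact $e_{ab}e_{bc}e_{ca}$ becomes freely trivial after substitution, being a closed path in a tree. Only $e_{bc}e_{ab}e_{ca}$ genuinely needs the commutations.
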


Here, a \textit{tree clique-spanner} is a spanning tree $T$ of $\Gamma$ such that, for every edge $\{u,v\}\in \ee \Gamma$, the unique path from $u$ to $v$ in $T$ is contained in a clique $K$ of $\Gamma$; see the right-hand picture in Figure~\ref{fig:TCS} for an example.
The \textit{dual graph} $\dualtree T$ is the graph whose vertex set is $\ee T$, and two vertices are adjacent whenever the corresponding edges of $T$ lie in a common clique of $\Gamma$.

By \cite[Theorem B]{CR26} and Theorem~\ref{mainthm:raag recognition}, the BBG defined by each of the graphs in Figure~\ref{fig:TCS} is a RAAG. Moreover,~\cite[Lemma 3.1]{CR26} implies that tree $2$-spanners are tree clique-spanners. Thus, Theorem~\ref{mainthm:raag recognition} recovers \cite[Theorem B]{CR26}. 
However, there are graphs that admit tree clique-spanners but no tree $2$-spanners, such as
the right-hand graph in Figure~\ref{fig:TCS} and the graph $\Gamma$ in Theorem~\ref{introthm non isom}.
Therefore, Theorem~\ref{mainthm:raag recognition} is a genuine generalization of~\cite[Theorem B]{CR26}.

Tree clique-spanners have been studied in the literature under a variety of names. 
A graph is \textit{chordal} if every induced cycle\footnote{In this paper, a \textit{cycle}  in a graph $\Gamma$ is a subgraph $C$ such that every vertex of $C$ has two neighbors in $C$.} of length at least four has a chord. A graph is \textit{dually chordal} if it is the intersection graph of the maximal cliques of a chordal graph.
It was shown in~\cite{SB94} that a graph admits a tree clique-spanner if and only if it is dually chordal.
By \cite[Corollary 1]{BDCV98}, whether a graph $\Gamma$ is dually chordal can be recognized in linear time $O(|\vv\Gamma|+|\ee\Gamma|)$.
Chordality of $\Gamma$ is known to be equivalent to the coherence of $\raag{\Gamma}$ and $\bbg\Gamma$; see \cite{dromsraag3manifolds,BL25}.
Unlike chordal graphs, full subgraphs of dually chordal graphs may not be dually chordal. 
A \textit{strongly chordal graph} is a chordal graph such that every even cycle of length at least six has an odd chord. Equivalently, a graph is strongly chordal if and only if it is a dually chordal graph and every full subgraph is dually chordal; see \cite[Corollary 3]{BDCV98}. 
Theorem~\ref{mainthm:raag recognition} has the following corollary.
In particular, property \eqref{item:intro new BBG minsep} provides an easy and flexible way to construct graphs with tree clique-spanners. We use this property in the proof of Theorem~\ref{introthm non isom}.

\begin{corollary}\label{cor:more bbgs}
    Let $\Gamma$ be a  connected simplicial  graph satisfying one of the following properties:
    \begin{enumerate}
        \item $\Gamma$ is dually chordal.
        \item $\Gamma$ is strongly chordal.
        \item \label{item:intro new BBG minsep} The minimal separators of $\Gamma$ are connected and pairwise disjoint.
    \end{enumerate}
    Then $\Gamma$ admits a tree clique-spanner, and $\bbg \Gamma$ is a RAAG.
\end{corollary}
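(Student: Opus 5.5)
By Theorem~\ref{mainthm:raag recognition}, it suffices to show that in each of the three cases $\Gamma$ admits a tree clique-spanner. The conclusion that $\bbg\Gamma\cong\raag{\dualtree T}$ is a RAAG then follows directly.

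\emph{Cases (1) and (2).} For (1) I would invoke the characterization of \cite{SB94}: a graph admits a tree clique-spanner if and only if it is dually chordal. Case (2) reduces to case (1) by \cite[Corollary 3]{BDCV98}, since strongly chordal graphs are in particular dually chordal.

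\emph{Case (3).} This is the main work. I will construct a tree clique-spanner directly, by induction on the number of vertices.

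\textbf{Base case.} If $\Gamma$ has no minimal separators, then $\Gamma$ is complete. Any spanning tree is then a tree clique-spanner, since the whole graph is a single clique.

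\textbf{Inductive step, splitting.} Otherwise, pick a minimal separator $S$ and a full component $A$ of $\Gamma - S$. Write
\[
\Gamma_1=\Gamma[A\cup S], \qquad \Gamma_2=\Gamma - A,
\]
so that $\Gamma=\Gamma_1\cup\Gamma_2$ with $\Gamma_1\cap\Gamma_2=\Gamma[S]$.

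\textbf{Inductive step, hypotheses pass to the pieces.} I need both pieces to be connected and to satisfy the hypothesis of (3), and I need $S$ to be connected. The latter holds by assumption. The key point, and the one I expect to be the main obstacle, is showing that every minimal separator of $\Gamma_i$ is a minimal separator of $\Gamma$. This is the standard behaviour of separators under clique-sum-like decompositions along a minimal separator with full components. Its proof needs care, because $S$ need not be a clique: one must show that a minimal $u$--$v$ separator in $\Gamma_i$ still separates $u$ from $v$ in $\Gamma$. My approach is to use the full component on the other side of $S$ to reroute paths. If that step fails, I would instead first show that the hypotheses of (3) force $\Gamma$ to be chordal, so that the minimal separators are cliques and the claim becomes the standard one. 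Granting the claim, pairwise disjointness and connectedness are inherited by the separators of each $\Gamma_i$.

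\textbf{Inductive step, gluing.} By induction, $\Gamma_i$ has a tree clique-spanner $T_i$. I would like to arrange that $T_1$ and $T_2$ restrict to the same spanning tree $T_S$ of $\Gamma[S]$, and then set $T=T_1\cup T_2$. To achieve this, I would strengthen the induction: for any connected set $S'$ that is a union of separators, the spanning tree can be chosen to restrict to a prescribed spanning tree of $\Gamma[S']$. Disjointness of separators is what lets these prescriptions not conflict.

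\textbf{Inductive step, verification.} Every edge of $\Gamma$ lies in $\Gamma_1$ or $\Gamma_2$, because $S$ separates $A$ from the rest. Its $T$-path equals its $T_i$-path, since $T_i$ is a subtree of $T$. That path lies in a clique of $\Gamma_i$, hence in a clique of $\Gamma$. So $T$ is a tree clique-spanner of $\Gamma$, completing the induction.
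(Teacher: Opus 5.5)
\textbf{Cases (1)--(2).} Your treatment of cases (1) and (2), and the reduction to Theorem~\ref{mainthm:raag recognition}, agree with the paper.

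\textbf{Case (3), separator inheritance.} Case (3) has two genuine gaps. The first is the claim that minimal separators of $\Gamma_i$ are minimal separators of $\Gamma$. It cannot be proved by rerouting through a full component, because it is false when $S$ is merely a connected minimal separator with full components. Take the wheel $W_4$ with hub $c$ and rim $1,2,3,4$. The set $S=\{2,4,c\}$ is a connected minimal separator with full component $A=\{1\}$. Yet $\Gamma_1$ is $K_4$ minus an edge, and its minimal separator $\{1,c\}$ does not separate $W_4$. So you really need the global disjointness hypothesis, i.e.\ your fallback, which you assert but do not prove. It is true. Suppose a minimal separator $S$ contained non-adjacent $x,y$, and pick any minimal $(x,y)$-separator $S'$. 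A path from $x$ to $y$ inside $\Gamma[S]$ must meet $S'$, so $S\cap S'\neq\varnothing$, while $x\in S\setminus S'$; this contradicts disjointness. Hence all minimal separators are cliques. The inheritance is then the standard clique-sum argument: the part of $\Gamma$ outside $\Gamma_i$ attaches only to the clique $S\setminus S'$, which lies in a single component of $\Gamma_i-S'$.

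\textbf{Case (3), gluing.} The second gap is that the gluing step does not work as formulated. $S$ is not a separator of $\Gamma_1$ at all, since $\Gamma_1-S=\Gamma[A]$ is connected. By inheritance, every minimal separator of $\Gamma_1$ is a minimal separator of $\Gamma$ other than $S$, hence disjoint from $S$. So your strengthened hypothesis never lets you prescribe $T_S$ inside $\Gamma_1$.

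Moreover, as literally stated it is false for unions of several separators. Take the chordal graph with edges $ab,ac,bc,ax,bx,cy$. Its minimal separators are $\{a,b\}$ and $\{c\}$, and their union is connected. But no tree clique-spanner restricts to the path $a$--$c$--$b$ on $\{a,b,c\}$: whichever of $ax,bx$ lies in $T$, the $T$-path of the other edge contains the non-adjacent vertices $c$ and $x$.

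What does work is to prescribe spanning trees on a family of pairwise disjoint connected sets, each disjoint from or equal to every minimal separator, and to check that this survives the inductive step. At that point the induction is unnecessary, and you have the paper's argument (Lemma~\ref{lem:disjoint_minsep_tcs}):
\begin{enumerate}
\item choose a spanning tree $T_S$ of $\Gamma[S]$ for every minimal separator $S$;
\item note that their union is a forest by disjointness, and extend it to a spanning tree $T$;
\item observe that $T\cap\Gamma[S]=T_S$, since any extra edge would close a cycle with $T_S$;
\item conclude with the minimal-separator characterization, Theorem~\ref{thm:dually chordal}\eqref{item:dually chordal minsep}.
\end{enumerate}
A repaired version of your induction does buy independence from that cited characterization, since it verifies the clique-spanner condition directly.
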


Beyond the group-theoretic applications, we prove some structural properties of tree clique-spanners that may be of independent interest.
First, we show that if a graph admits a tree clique-spanner, then its flag complex is contractible; see Corollary~\ref{cor:contractible}.
Second, we show in Proposition~\ref{prop:loc_con} that every tree clique-spanner of a biconnected  graph is \textit{locally connected}: for each vertex of the graph, its link in the tree clique-spanner induces a connected subgraph of the ambient graph. This extends Cai's work on tree 2-spanners of 2-connected graphs; see~\cite[Corollary~1.2]{Cai1997}.
Finally, we show that every dually chordal graph admits a locally connected spanning tree; see Corollary~\ref{cor:loc con dually chordal}. This extends a result on biconnected strongly chordal graphs in \cite[Corollary 5]{LCC07}.

\subsectionnotoc{Outline of the paper.}
The paper is organized as follows. In \S\ref{sec: presentation BBG}, we review the Dicks--Leary presentation for BBGs.
In \S\ref{sec:matroids}, we develop the theory of orderly matroids and their orderly matroid groups (OMGs). We prove Theorem~\ref{mainthm matroid intro}, which provides a factorization of the graph isomorphism problem for BBGs (Problem~\ref{problem graph}) through matroid theory, and Theorem~\ref{thm:faithful}, which shows that the OMG of an orderly graphic matroid is isomorphic to the BBG on a graph that can be constructed from the ordering of the matroid.
In \S\ref{sec:tcs}, we introduce tree-clique spanners and discuss their relation to dually chordal graphs.
Finally, in \S\ref{sec:isoproblems}, we prove Theorem~\ref{mainthm:raag recognition}, which 
addresses the RAAG recognition problem (Problem~\ref{problem raag}), and Theorem~\ref{introthm non isom}, which shows the limitations of the matroid-theoretic approach to Problem~\ref{problem graph}.

\subsectionnotoc{Notations.}
We collect some notations for the reader's convenience.
\begin{itemize}
    \item $\Gamma$ denotes a graph.
    \item $\raag \Gamma$ and $\bbg \Gamma$ denote the RAAG and BBG defined by $\Gamma$, respectively.
    \item $M=(E,\mathcal C)$ denotes a matroid with ground set $E$, and $\mathcal C$ denotes the set of circuits.
    \item $\mathfrak M=(E,\mathcal C,\omega)$ denotes an orderly matroid, and $\omega$ denotes the compatible ordering of $\mathfrak M$.
    \item $\bbgm{\mathfrak M}$ denotes the orderly matroid group (OMG) of the orderly matroid $\mathfrak M$.  
    \item $\shadow{\mathfrak M}$ denotes the orderly graph of the orderly matroid $\mathfrak M$. 
    \item $\dualtree T$ denotes the dual graph of a spanning tree $T$.
\end{itemize}

\subsectionnotoc{Acknowledgments.}
L.R. was partially supported by INDAM-GNSAGA.  
We thank Jialin Lei for sharing his preprint that led to Remark~\ref{jialin} and 
Matt Zaremsky for his helpful suggestions. Part of this work was completed while the first author was a postdoc at Wesleyan University; we thank the Department of Mathematics for its hospitality.

\section{Presentations for BBGs}\label{sec: presentation BBG}
Let $\Gamma$ be a simplicial graph. 
Let $\raag \Gamma$ and $\bbg \Gamma$ be the RAAG and BBG defined by $\Gamma$, respectively; see the Introduction for definition.
We now review the presentation for BBGs obtained by Dicks and Leary in \cite{DicksLeary99}.

Suppose that $\Gamma$ is connected, so $\bbg \Gamma$ is finitely generated; see \cite{BB1997}. 
Fix, once and for all, an arbitrary orientation for the edges of $\Gamma$.
The set of oriented edges is denoted by $\ee \Gamma ^\pm$. 
For each $e\in \ee \Gamma ^\pm$, we denote by $\bar e$ the edge with the opposite orientation.
The Dicks--Leary presentation for $\bbg\Gamma$ is the following infinite presentation: 
\begin{equation}\label{eq:infdicksleary}
    \bbg\Gamma=\langle \ee \Gamma ^\pm \mid I_\Gamma,R_\Gamma \rangle,
\end{equation}
where $I_\Gamma$ and $R_\Gamma$ are the sets of relators defined as follows:

\begin{itemize}
    \item $I_\Gamma$ consists of the relators of the form $e\bar e$ for each $e\in\ee\Gamma^\pm$.

    \item $R_\Gamma$ consists of the relators of the form 
    $e_1^k\cdots e_l^k$ for every $k\in \zz$ and for every oriented cycle $(e_1,\dots,e_l)$ with $e_i\in \ee \Gamma^\pm$.
\end{itemize}

Note that reversing the orientation of some edges results in replacing the corresponding generators with their inverses and hence gives an equivalent presentation.
Moreover, when the flag complex $\flag \Gamma$ is simply connected, the infinite presentation~\eqref{eq:infdicksleary} can be reduced to the following finite presentation; see \cite[Corollary~3]{DicksLeary99}:
\begin{equation}\label{eq:dicksleary}
    \bbg\Gamma=\langle \ee \Gamma ^\pm \mid I_\Gamma,R^\Delta_\Gamma \rangle,
\end{equation}
where $I_\Gamma$ is defined as above, and $R^\Delta_\Gamma$ consists of the two relators $e_1e_2e_3$ and $e_3e_2e_1$ for each oriented triangle $\tau=(e_1,e_2,e_3)$ of $\Gamma$ shown in Figure~\ref{fig: triangle relator}.

\begin{figure}[ht!]
\centering\usetikzlibrary{decorations.markings, arrows.meta}

\tikzset{
    mid arrow/.style={
        postaction={
            decorate,
            decoration={
                markings,
                mark=at position 0.55 with {\arrow[scale=1]{Triangle}}
            }
        }
    }
}

\begin{tikzpicture}[scale=0.5]
\draw[thick, mid arrow] (3,3) -- (0,0);
\draw[thick, mid arrow] (0,0) -- (5,0);
\draw[thick, mid arrow] (5,0) -- (3,3);
\draw [fill] (0,0) circle [radius=0.15];
\draw [fill] (5,0) circle [radius=0.15];
\draw [fill] (3,3) circle [radius=0.15];

\node [above left] at (1.6,1.25) {$e_1$};
\node [below] at (2.5,-0.2) {$e_2$};
\node [above right] at (4,1.25) {$e_3$};
\end{tikzpicture}
    \caption{An oriented triangle.}
    \label{fig: triangle relator}
\end{figure}

From the presentation~\eqref{eq:dicksleary}, one can deduce a solution to a particular case of the graph isomorphism problem for BBGs (Problem~\ref{problem graph}).

\begin{proposition}\label{free}
A graph $\Gamma$ is a tree with $n$ edges if and only if $\bbg \Gamma =\ff_n$.
\end{proposition}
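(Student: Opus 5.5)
Since the statement involves $\bbg\Gamma$ being finitely generated free, we may take $\Gamma$ to be simplicial and connected. This is the standing setting of this section; if $\Gamma$ is disconnected, $\bbg\Gamma$ is not finitely generated by \cite{BB1997}, so it cannot be $\ff_n$. I prove the two directions separately.

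\emph{Forward direction.} Suppose $\Gamma$ is a tree with $n$ edges. Then $\flag\Gamma=\Gamma$ is contractible, hence simply connected, so the finite presentation~\eqref{eq:dicksleary} applies. A tree has no triangles, so $R^\Delta_\Gamma=\emptyset$. The relators $I_\Gamma$ only identify $\bar e$ with $e^{-1}$. After discarding one orientation of each edge, the presentation becomes $\langle \ee\Gamma\mid\ \rangle$, which is free of rank $|\ee\Gamma|=n$. One could equally use the infinite presentation~\eqref{eq:infdicksleary}: a tree has no cycles, so $R_\Gamma$ is empty.

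\emph{Reverse direction.} Suppose $\bbg\Gamma\cong\ff_n$. I would first show that $\Gamma$ has no cycles, and then count edges via abelianization. If $\Gamma$ contains a cycle, it contains an induced cycle: either a triangle, or an induced $C_m$ with $m\ge 4$.

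In the triangle case, the full subgraph on the triangle is $K_3$. Full subgraphs give retracts of RAAGs compatible with $\chi$, so $\bbg{K_3}$ is a subgroup of $\bbg\Gamma$. Here $\bbg{K_3}\cong\zz^2$, being the kernel of a surjection $\zz^3\to\zz$. But $\zz^2$ cannot embed in a free group, a contradiction.

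In the case of an induced $C_m$ with $m\ge4$, two non-adjacent vertices $a,c$ of the cycle, each adjacent to $b$, give elements $ab^{-1}$ and $cb^{-1}$. I would instead take the edge-generators $e=ab^{-1}$ and $f=bc^{-1}$ along the path $a,b,c$. Since $[a,b]=[b,c]=1$, a standard computation shows that $e$ and $f$ generate $\zz^2$ only when $a$ and $c$ commute. So a better route is cleaner: use a cohomological or finiteness invariant. By \cite{BB1997}, if $\flag\Gamma$ is not simply connected then $\bbg\Gamma$ is not finitely presented, contradicting freeness. Hence $\flag\Gamma$ is simply connected. Triangles are excluded as above, so $\flag\Gamma=\Gamma$ is a simply connected graph, i.e.\ a tree. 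This makes the $C_m$ case unnecessary to treat separately.

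\emph{Counting edges.} For the tree, the forward direction gives $\bbg\Gamma\cong\ff_{|\ee\Gamma|}$. Free groups of different ranks have different abelianizations, so $|\ee\Gamma|=n$.

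\emph{Main obstacle.} The delicate step is the subgroup claim $\bbg{K_3}\le\bbg\Gamma$. I would justify it through the retraction $\raag\Gamma\to\raag{K_3}$, which kills the other generators. It restricts to a map on kernels that is left-inverse to the inclusion, because both maps commute with $\chi$. Alternatively, directly from~\eqref{eq:dicksleary}, the triangle relators $e_1e_2e_3=e_3e_2e_1=1$ yield a nontrivial commuting pair $e_1,e_3$. In a free group this forces $e_1,e_3$ into a common cyclic subgroup, and one must rule that out by the $\zz^2$ embedding above.
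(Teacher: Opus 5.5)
Your proof is correct and is essentially the paper's argument: the forward direction comes from the Dicks--Leary presentation. For the converse, the absence of $\zz^2$ subgroups rules out triangles, finite presentability together with \cite{BB1997} forces $\flag\Gamma=\Gamma$ to be simply connected, and so $\Gamma$ is a tree with $n$ edges (you also justify the triangle step, which the paper leaves implicit).

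One small correction to that justification: the retraction $\raag\Gamma\to\raag{K_3}$ that kills the other vertices does not commute with $\chi$, so it does not restrict to the kernels. It does show that the inclusion $\raag{K_3}\hookrightarrow\raag\Gamma$ is injective, and since the inclusion commutes with $\chi$, it restricts to the desired embedding $\zz^2\cong\bbg{K_3}\hookrightarrow\bbg\Gamma$.
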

\begin{proof}
    If $\Gamma$ is a tree with $n$ edges, then it has no cycles. Thus, the Dicks--Leary presentation is a presentation of $\ff_n$.
    Conversely, suppose $\bbg \Gamma =\ff_n$. Then $\bbg \Gamma$ does not contain $\zz^2$ subgroups, so $\Gamma$ is triangle-free. Since $\bbg \Gamma$ is finitely presented, the flag complex $\flag \Gamma$ is simply connected by \cite{BB1997}. Hence, the graph $\Gamma$ is a tree, necessarily with $n$ edges, by the first part of this proof.
\end{proof}

When $\Gamma$ is connected, one sees from the presentation~\eqref{eq:infdicksleary} that $\bbg \Gamma$ is the free product of the BBGs defined by the biconnected components of $\Gamma$.
Therefore, it is natural to restrict our attention to BBGs associated with biconnected graphs.
Similarly, if $e$ is a separating edge of $\Gamma$, then $\bbg \Gamma$ splits over $\zz =\langle e\rangle$; see \cite{lorenzo,ChangJSJofBBGs, SplittingBBG25} for more results about splittings of BBGs.

We now describe the effect of a more subtle graph operation on the associated BBGs.
Let $\Lambda_1$ and $\Lambda_2$ be  simplicial graphs, and choose vertices $u_i,v_i\in \vv{\Lambda_i}$ for $i=1,2$.
Let $\Gamma$ be the  simplicial graph obtained from $\Lambda_1$ and $\Lambda_2$ by identifying $u_1$ with $u_2$ and $v_1$ with $v_2$, and by identifying any resulting parallel edges. We denote by $\{u,v\}$ the set of vertices of $\Gamma$ arising from $\{u_i,v_i\}$.
The graph $\Gamma'$ obtained from $\Lambda_1$ and $\Lambda_2$ by identifying $u_1$ with $v_2$ and $v_1$ with $u_2$, and by identifying any parallel edges if needed. Then $\Gamma'$ is said to be obtained from $\Gamma$ by a \textit{Whitney twist} about $\{u,v\}$.
The graphs $\Lambda_1$ and $\Lambda_2$ are called the \textit{pieces} of the Whitney twist.

Two  biconnected simplicial graphs are called \textit{$2$-isomorphic} if one can be obtained from the other by a sequence of  Whitney twists. For example, the graphs in Figure~\ref{fig:noniso_2connected} are $2$-isomorphic via one Whitney twist about $\{u,v\}$.

\begin{proposition}\label{prop:2isobbg}
   Let $\Gamma$ and $\Gamma'$ be 
    biconnected simplicial graphs such that $\flag\Gamma$ and $\flag{\Gamma'}$ are simply connected. If $\Gamma$ and $\Gamma'$ are $2$-isomorphic, then $\bbg{\Gamma}\cong\bbg{\Gamma'}$.
\end{proposition}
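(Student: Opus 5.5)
By induction on the number of Whitney twists, it suffices to treat the case where $\Gamma'$ is obtained from $\Gamma$ by a single Whitney twist about $\{u,v\}$ with pieces $\Lambda_1,\Lambda_2$. The intermediate graphs in the sequence are only known to be biconnected. I expect, however, to be able to check that simple connectivity of the flag complex is preserved by a twist; alternatively, I will arrange the argument so that it uses only the hypotheses on $\Gamma$ and $\Gamma'$. The plan is to use the finite Dicks--Leary presentation~\eqref{eq:dicksleary}. Its generators are the oriented edges, and its relators are $e\bar e$ together with $e_1e_2e_3$ and $e_3e_2e_1$ for each oriented triangle.

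First I match edges. Every edge of $\Gamma$ lies in $\Lambda_1$ or in $\Lambda_2$, and only the possible edge $\{u,v\}$ can lie in both. I define a bijection $\phi\colon \ee\Gamma^\pm\to\ee{\Gamma'}^\pm$ as follows. On the $\Lambda_1$-part, $\phi$ is the identity. On the $\Lambda_2$-part, $\phi$ sends each oriented edge to the same edge of $\Lambda_2$, now glued with $u_2\leftrightarrow v_2$ swapped; its orientation as an edge of $\Lambda_2$ is unchanged.

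The only delicate edge is $\{u,v\}$ when it is present. If it lies in both pieces, it is identified in $\Gamma$. In $\Gamma'$, the $\Lambda_1$-copy oriented $u\to v$ is identified with the $\Lambda_2$-copy oriented $v_2\to u_2$. So the two oriented identifications differ by a reversal, and the map $\phi$ is not well defined on this edge as stated.

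To fix this, I first apply an automorphism-free relabelling: I replace every generator coming from $\Lambda_2$ by its inverse. Equivalently, I apply the reversal map $e\mapsto \bar e$ on the $\Lambda_2$ side, which sends an oriented edge of $\Lambda_2$ from $x$ to $y$ to the one from $y$ to $x$. Under this reversal, the $\Lambda_2$-copy of $u\to v$ in $\Gamma$ corresponds to the edge $v_2\to u_2$, which in $\Gamma'$ is glued to $u\to v$ of $\Lambda_1$. The definition is therefore consistent on $\{u,v\}$.

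Next I check that relators are carried to relators. Every triangle of $\Gamma$ lies entirely in $\Lambda_1$ or entirely in $\Lambda_2$, since $\{u,v\}$ separates the two pieces and a triangle cannot contain vertices from both sides other than $u,v$. The same holds for $\Gamma'$. Triangles in $\Lambda_1$ are unchanged. For a triangle $(e_1,e_2,e_3)$ in $\Lambda_2$, reversing every edge gives the reversed triangle $(\bar e_3,\bar e_2,\bar e_1)$. Its two relators are $\bar e_3\bar e_2\bar e_1$ and $\bar e_1\bar e_2\bar e_3$, which are exactly the images of the relators $e_1e_2e_3$ and $e_3e_2e_1$. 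This is why the presentation~\eqref{eq:dicksleary} includes both cyclic orders. The relators $e\bar e$ are clearly preserved.

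Hence $\phi$ induces an isomorphism of presentations, so $\bbg\Gamma\cong\bbg{\Gamma'}$. The main obstacle is the consistency on the shared edge $\{u,v\}$, which the reversal trick resolves. A secondary obstacle is justifying simple connectivity of the intermediate graphs in a sequence of twists. I would handle this by a van Kampen argument: $\flag\Gamma$ is the union of $\flag{\Lambda_1}$ and $\flag{\Lambda_2}$ glued along $\{u,v\}$ or along the edge $uv$, and this decomposition is unchanged by a twist.
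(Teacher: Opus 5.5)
Your argument is correct, and it uses the same key trick as the paper: invert every generator that comes from one piece of the twist. What differs is the presentation in which the relators are checked.

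\textbf{Comparison with the paper.} The paper uses simple connectivity only to force $\{u,v\}$ to be an edge. It then passes to the splitting $\bbg\Gamma=\bbg{\Lambda_1}*_{\langle e\rangle}\bbg{\Lambda_2}$ over the separating edge and checks the infinite families $R_{\Lambda_2}$, where inversion simply replaces $k$ by $-k$. That route relies on the cited splitting but otherwise never uses simple connectivity. You instead work in the finite presentation~\eqref{eq:dicksleary} of the whole graph. No amalgam is needed there, because no triangle crosses the separator, and having both relators $e_1e_2e_3$ and $e_3e_2e_1$ absorbs the inversion. The price is that every graph in the sequence of twists must have a simply connected flag complex. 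Your van Kampen argument supplies this, and it fills a point the paper passes over with ``without loss of generality''.

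\textbf{A step that needs repair.} You claim that every triangle of $\Gamma$ lies entirely in $\Lambda_1$ or in $\Lambda_2$, and similarly that $\flag\Gamma=\flag{\Lambda_1}\cup\flag{\Lambda_2}$. Both fail when $\{u,v\}$ is an edge of exactly one piece. Suppose $u_1v_1\in\ee{\Lambda_1}$, $u_2v_2\notin\ee{\Lambda_2}$, and $w$ is a common neighbor of $u_2,v_2$ in $\Lambda_2$. Then the triangle $uvw$ has one edge on which your map is the identity and two on which it is reversal.

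The relators still match. In $\Gamma'$ write $u'=[u_1]=[v_2]$ and $v'=[v_1]=[u_2]$. The relator of $u\to v\to w\to u$ is sent to $(u'\to v')(w\to u')(v'\to w)$. This is the $e_3e_2e_1$-type relator of the oriented triangle $(v'\to w,\ w\to u',\ u'\to v')$.

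The clean fix is already implicit in your consistency check on $\{u,v\}$. Replace each $\Lambda_i$ by the full subgraph of $\Gamma$ on $\vv{\Lambda_i}$; that is, add $uv$ to both pieces, which changes neither $\Gamma$ nor $\Gamma'$. Then:
\begin{itemize}
\item your map is the identity on the first piece and reverses every edge (read in $\Lambda_2$-coordinates) on the second;
\item every triangle has its vertex set in one piece;
\item the van Kampen decomposition becomes literally correct.
\end{itemize}

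Finally, state explicitly that the same recipe, applied to the twist from $\Gamma'$ back to $\Gamma$, gives the inverse map. This shows that every relator of $\Gamma'$ is hit as well.
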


\begin{proof}
    Without loss of generality, suppose that  $\Gamma'$ is obtained from $\Gamma$ by a Whitney twist about a pair of vertices $\{u,v\}$.
    Since $\flag\Gamma$ and $\flag{\Gamma'}$ are simply connected, the vertices $u$ and $v$ are adjacent, and $e=\{u,v\}$ is a separating edge of both $\Gamma$ and $\Gamma'$. 
    
    Let $\Lambda_1$ and $\Lambda_2$ be the pieces of the Whitney twist.
    For $i=1,2$, let $e_i$ be the edge of $\Lambda_i$ corresponding to $e$, and let $f_{i1}, \dots, f_{in_i}$ be the remaining edges of $\Lambda_i$.
    After fixing an orientation of the edges, we have the Dicks--Leary presentation \eqref{eq:infdicksleary} for $\bbg{\Lambda_i}$: 
    $$\bbg{\Lambda_i} = \langle  e_i, \bar e_i , f_{ij}, \bar f_{ij}, j=1,\dots, n_i\mid I_{\Lambda_i},R_{\Lambda_i}\rangle.$$

    Since $e$ is a separating edge of both $\Gamma$ and $\Gamma'$, the groups $\bbg \Gamma$ and $\bbg{\Gamma'}$ split over the cyclic subgroup $\langle e\rangle$ .
    These splittings give the presentations
    $$\bbg \Gamma = \langle  e_i,\bar e_i , f_{ij},\bar f_{ij}, i=1,2, j=1,\dots, n_i \mid e_1=e_2,I_{\Lambda_i},R_{\Lambda_i},i=1,2\rangle $$
    and
    $$\bbg {\Gamma'} = \langle  e_i,\bar e_i , f_{ij},\bar f_{ij}, i=1,2, j=1,\dots, n_i \mid e_1= \bar  e_2,I_{\Lambda_i},R_{\Lambda_i},i=1,2\rangle.
    $$
(These are not the Dicks--Leary presentations of these BBGs, as they do not include relators  associated with cycles that visit both pieces of the Whitney twist.)

Define a map on generators by $e_1\mapsto e_1$, $e_2\mapsto \bar e_2= e^{-1}_2$, $f_{1j}\mapsto f_{1j}$, and $f_{2j}\mapsto \bar f_{2j} = x^{-1}_{2j}$. This map
extends uniquely to a homomorphism $\psi\colon\bbg\Gamma \to \bbg {\Gamma'}$.
We only need to check that $\psi$ preserves the relators in $R_{\Lambda_2}$.
A relator in $R_{\Lambda_2}$ is of the form $y_1^k\cdots y_p^k$, where $k\in \zz$ and $y_s\in \{e_2,\bar e_2, f_{2j},\bar f_{2j} \mid j=1,\dots,n_2\}$ for $s=1,\dots,p$.
Then $\psi(y^k_1\cdots y^k_p)=y^{-k}_1\cdots y^{-k}_p$ is also a relator in $R_{\Lambda_2}$.
The map $\psi$ admits an inverse defined in the analogous way.
Therefore, the map $\psi\colon\bbg\Gamma \to \bbg {\Gamma'}$ is an isomorphism. 
\end{proof}

\begin{remark}\label{jialin}
The assumption in Proposition~\ref{prop:2isobbg} that the flag complexes are simply connected is necessary.
Jialin Lei has constructed an example of two biconnected graphs that are $2$-isomorphic but have non-isomorphic BBGs.
These BBGs are not finitely presented; see \cite{jialin}.
\end{remark}

Whitney twists have been studied in graph theory and matroid theory. Whitney's 2-isomorphism theorem states that two biconnected graphs are 2-isomorphic if and only if their cycle matroids are isomorphic; see \cite[Theorem 5.3.1]{Oxley}.
In~\S\ref{sec:matroids}, we introduce a BBG-like group associated with any oriented matroid whose circuits satisfy a suitable orderability condition.
(The reader only interested in  RAAGs can safely skip \S\ref{sec:matroids}.)
In~\S\ref{sec:isoproblems}, we construct non-isomorphic graphs with non-isomorphic cycle matroids but isomorphic BBGs. This construction relies on a new criterion for a BBG to be a RAAG based on a certain type of spanning trees that we study in \S\ref{sec:tcs}.
 
In the following, it will be convenient to consider RAAGs and BBGs defined by graphs that are not necessarily simplicial, in the sense of the next remark.

\begin{remark}\label{remark:non simplicial RAAG BBG}
    One can define a RAAG on a non-simplicial graph using the usual presentation.
    The resulting group is the same RAAG defined by the simplicial graph obtained from the original graph by removing all loops and identifying parallel edges.
    One can then define the BBG as the kernel of the homomorphism sending all generators of the RAAG to $1$.
    Therefore, allowing non-simplicial graphs does not produce new RAAGs or BBGs.
    This will make it more convenient to state some theorems in this paper.
\end{remark}

\section{Groups and graphs associated to  matroids}\label{sec:matroids}
In this section, we introduce a certain class of matroids and associate a finitely generated group to each such matroid.
A \emph{matroid} $M=(E,\mathcal C)$ is a combinatorial object consisting of a finite set $E$, called the \textit{ground set}, and a collection $\mathcal C \subseteq \mathcal P(E)$ of subsets of $E$, called the \textit{circuits}. These circuits satisfy certain axioms that simultaneously model the features of cycles in a graph and linear dependence in a vector space; see \cite{Oxley} for background.

Roughly speaking, an \textit{orderly matroid} is an oriented matroid whose circuits are cyclically ordered, and the ordering is \textit{compatible} with the orientation.
A motivating example is the \textit{cycle matroid} of a graph $\Gamma$: the ground set is the edge set of $\Gamma$, and the circuits are the edge sets of cycles in $\Gamma$. 
An orientation can be defined by choosing an orientation of each edge, and the circuits are cyclically ordered according to the order in which their elements appear along the corresponding cycles in $\Gamma$; see \S\ref{sec:cycle matroids}.
More generally, a matroid is \textit{graphic} if it is isomorphic to the cycle matroid of some graph.

\subsection{Orderly matroids}
We now extend the notion of orderability introduced by Crenshaw and Oxley~\cite{oxleycrenshaw} from matroids  to oriented matroids.

\subsubsection{Orderable matroids}
Let $X$ be a set with $n$ elements. A \textit{reversible cyclic ordering} of $X$ is a one-to-one assignment of the elements of $X$ to the vertices of an $n$-gon. Two elements are called \textit{adjacent} if the corresponding vertices of the $n$-gon are adjacent.
Let $M=(E,\mathcal{C})$ be a matroid. An \textit{ordering} of $M$ is a choice of a reversible cyclic ordering for each circuit. An ordering is called \textit{consistent} if, for any $C_1,C_2\in\mathcal{C}$ and $e,f\in C_1\cap C_2$, whenever $e$ and $f$ are adjacent in $C_1$, they are also adjacent in $C_2$. A matroid is called \textit{orderable} if it admits a consistent ordering. For example, graphic matroids are orderable; see~\cite[Proposition 1]{oxleycrenshaw}.

\subsubsection{Oriented matroids}
A \textit{signed set} $X$ consists of a set $\underline{X}$ together with a partition $(X^+,X^-)$ of $\underline{X}$.
We call $\underline{X}$ the \textit{support}, $X^+$ the set of \textit{positive} elements, and $X^-$ the set of \textit{negative} elements of $X$.
The \textit{opposite} of $X$, denoted by $-X$, is the signed set with the partition $(-X)^+=X^-$ and $(-X)^-=X^+$.
The \textit{empty signed set} is denoted by $\emptyset=(\emptyset,\emptyset)$. 
Let $X$ and $Y$ be two signed sets. We say that $X$ is a \textit{subset} of $Y$, denoted by $X\subseteq Y$, if and only if $X^+\subseteq Y^+$ and $X^-\subseteq Y^-$. We say that $X$ and $Y$ are equal, denoted by $X=Y$, if and only if $X\subseteq Y$ and $Y\subseteq X$. Let $E$ be a finite set. A \textit{signed subset} of $E$ is signed set $X$ such that $\underline{X}\subseteq E$.

An \textit{oriented matroid} is a pair $\mathcal{M}=(E,\mathcal C)$, where $E$ is a finite set, and $\mathcal{C}$ is a collection of signed subsets of $E$, satisfying the following axioms:
\begin{enumerate}
\item[(C0)] $\emptyset\notin\mathcal{C}$.
\item[(C1)] If $C\in\mathcal{C}$, then $-C\in\mathcal{C}$. 
\item[(C2)] If $C_1,C_2\in\mathcal{C}$ and $\underline{C_1}\subseteq\underline{C_2}$, then $C_1=C_2$ or $C_1=-C_2$.
\item[(C3)] If $C_1,C_2\in\mathcal{C}$, $C_1\neq-C_2$, and $e\in C_1^+\cap C_2^-$, then there exists $C_3\in\mathcal{C}$ such that $C_3^+\subseteq(C_1^+\cup C_2^+)\setminus\{e\}$ and $C_3^-\subseteq(C_1^-\cup C_2^-)\setminus\{e\}$.
\end{enumerate}
The set $E$ is called  the \textit{ground set}, and the elements of $\mathcal{C}$ are called the \textit{signed circuits}.
We refer the reader to~\cite{orientedmatroids,Laura2025} for more details.

\subsubsection{Orderly matroids}
Let $\mathcal M=(E,\mathcal{C})$ be an oriented matroid. 
An \textit{ordering} of $\mathcal{M}$ is a choice of a cyclic ordering on the support of each signed circuit $C\in\mathcal{C}$, such that the cyclic ordering assigned to $-C$ is the reverse of the cyclic ordering assigned to $C$.
We denote by $\omega$ the collection of the cyclic orderings of all signed circuits. Notice that these are genuine cyclic orderings rather than the reversible orderings considered in \cite{oxleycrenshaw}.

To keep track of both the orientation and the ordering, we use the following notation.
For $C\in \mathcal C$ with support $\underline{C}=\{e_1,\dots,e_n\}$, we write  $\omega(C)=\order{e_{i_1}^{\epsilon_{i_1}},\dots,e_{i_n}^{\epsilon_{i_n}}}$
to indicate that $\underline{C}$ is cyclically ordered as $(e_{i_1},\dots,e_{i_n})$
and that $e_{i_j}\in C^\pm$ when  $\epsilon_{i_j}=\pm 1$.
In this case,  we say that $e_{i_{j+1}}^{\epsilon_{i_{j+1}}}$  \textit{follows} $e_{i_j}^{\epsilon_{i_{j}}}$, with indices taken modulo~$n$. 
An ordering $\omega$ of $\mathcal{M}=(E,\mathcal{C})$ is \textit{compatible} if, for all $e_i,e_j\in E$ and all $C_1,C_2 \in \mathcal C$ with $e_i,e_j\in \underline{C_1}\cap\underline{C_2}$, 
whenever $e_j^{\epsilon_{j}}$ follows $e_i^{\epsilon_{i}}$ in  $\omega(C_1)$,
we have that 
$e_j^{\epsilon_{j}}$ follows $e_i^{\epsilon_{i}}$ in either $\omega(C_2)$ or $\omega(-C_2)$.
We refer the reader to Example~\ref{ex: OMG uniform} and Example~\ref{ex: OMG graphic} for the use of this notation.

\begin{definition}[Orderly matroid]\label{def: orderly matroid}
An \textit{orderly matroid} is a triple $\mathfrak M=(E,\mathcal{C},\omega)$, where $\mathcal M=(E,\mathcal{C})$ is an oriented matroid and $\omega$ is a compatible ordering of $\mathcal{M}$.
\end{definition}

Note that if $\mathfrak M$ is an orderly matroid, then the underlying (unoriented) matroid is orderable in the sense of \cite{oxleycrenshaw}.
Also, note that a matroid can be oriented and ordered without being orderly; see Example~\ref{ex: OMG uniform}.

The construction in \S\ref{sec:OMG} below can be carried out for matroids that are oriented and ordered, without assuming that the ordering is compatible. However, it seems more natural to develop the theory in the special case of orderly matroids. Compatibility will be particularly relevant in \S\ref{sec: orderly graph}.

\subsection{Orderly matroid groups}\label{sec:OMG}
We now associate a group to each orderly matroid. This construction is inspired by the Dicks--Leary presentation~\eqref{eq:infdicksleary} for BBGs.

\begin{definition}[Orderly matroid group]\label{def:OMG}
Let $\mathfrak M=(E,\mathcal{C},\omega)$ be an orderly matroid. 
The \textit{orderly matroid group (OMG)}, denoted by $\bbgm{\mathfrak M}$, is the group defined by the presentation 
\begin{equation}\label{eq:OMG}
\bbgm{\mathfrak M} =\langle E  \mid R_{\mathcal C,\omega}\rangle,
\end{equation}
where $R_{\mathcal C,\omega}$ consists of relators of the form 
$e_{i_1}^{\epsilon_{i_1}k}\cdots e_{i_n}^{\epsilon_{i_n}k}$
for each ordered signed circuit  $\omega(C)=\order{e_{i_1}^{\epsilon_{i_1}},\dots,e_{i_n}^{\epsilon_{i_n}}}$ and each $k\in\zz$.
\end{definition}

Some remarks are in order.

\begin{remark}
The presentation~\eqref{eq:OMG} is often redundant. 
For instance, if there is a circuit $C$ with $\omega(C)=\order e$, then $e=1$ in $\bbgm{\mathfrak M}$; if  there is a circuit $C$ with $\omega(C)=\order {e,f}$, then $f=e^{-1}$ in $\bbgm{\mathfrak M}$.
More generally, every generator that appears in a circuit can be written as a word in the other elements of that circuit. Therefore, any \textit{basis} $\mathcal{B}\subseteq E$ of $\mathfrak M$ provides a generating set of $\bbgm {\mathfrak{M}}$. 
\end{remark}

\begin{remark}
    The ordered signed circuits $\omega(C)$ and $\omega(-C)$ give rise to the same set of relators. Indeed, for each $k\in\zz$, the relator associated to $\omega(-C)$ is of the form $e_{i_n}^{-\epsilon_{i_n}k}\cdots e_{i_1}^{-\epsilon_{i_1}k}$, which is the inverse of the relator $e_{i_1}^{\epsilon_{i_1}k}\cdots e_{i_n}^{\epsilon_{i_n}k}$ given by $\omega(C)$. 
\end{remark}

\begin{remark}
    We will see in \S\ref{sec:orientation} that reorienting the matroid $\mathfrak M$ does not change the group $\bbgm{\mathfrak M}$ up to isomorphism.
    On the other hand, the ordering plays a more essential role in Definition~\ref{def:OMG}. 
    Sometimes the ordering can be ignored (see Corollary~\ref{cor: s.c. OMG}), but sometimes it cannot (see Remark~\ref{jialin}).
\end{remark}

We now give examples of OMGs of \textit{orderly uniform matroids}. In~\S\ref{sec:cycle matroids}, we will focus on the OMGs arising from orderly matroids associated with graphs. 

\begin{example}\label{ex: OMG uniform} 
A \textit{uniform matroid} $U_{r,n}$ is a matroid on an $n$-element set $E=\{e_1,\dots,e_n\}$ whose circuits are precisely the $(r+1)$-element subsets of $E$.
It was shown in \cite[Corollary 17]{oxleycrenshaw} that the orderable uniform matroids are exactly $U_{0,n}$, $U_{1,n}$, $U_{2,n}$, $U_{n-1,n}$, and $U_{n,n}$.
Among them, only $U_{2,n}$ is not orderly and not graphic for $n\geq 4$. The OMGs of the others are as follows.
    \begin{enumerate}
        \item \label{item:U0n} $\bbgm{U_{0,n}}\cong 1$. Indeed, every single-element subset of $E$ is a circuit, so the list of relators includes $e_i$ for $i=1,\dots, n$.

        \item $\bbgm{U_{1,n}}\cong\mathbb{Z}$.
        The ordered signed circuits are $\order{e_i^{+1},e_j^{-1}}$ and $\order{e_j^{+1},e_i^{-1}}$ for all $i<j$. The corresponding relators include $e_ie_j^{-1}$. Thus, all the generators are identified, and all the relators involving higher powers are redundant.
        
        \item $\bbgm{U_{n-1,n}}\cong \bbg{C_n}$, where $C_n$ is the cycle of length $n$. This group is $\zz^2$ for $n=3$, and it is not finitely presented for $n\geq4$; see \cite{BB1997}.
        There is only one circuit $C$ with support $\underline{C}=E$. We can orient the matroid and order the signed circuit arbitrarily, say $C=\order{e_1^{+1},\dots,e_n^{+1}}$. The corresponding relators are of the form $e_1^k\cdots e_n^k$ for each $k\in\zz$, which is the set of relators in the Dicks--Leary presentation~\eqref{eq:infdicksleary} of $\bbg{C_{n}}$.
        
        \item $\bbgm{U_{n,n}}\cong \ff_n$. Indeed, there are no signed circuits, so there are no relators.         
    \end{enumerate}
Note that the orderly uniform matroids are exactly the orderly cycle matroids of the graphs in Example~\ref{prop:uniform_raag} below.
\end{example}

\subsubsection{Reorientation}\label{sec:orientation}
Let $\mathcal{M}=(E,\mathcal{C})$ be an oriented matroid, and let $A\subseteq E$. A \textit{reorientation} of $\mathcal{M}$ at $A$, denoted by $\mathcal{M}_{\overline{A}}=(E,\mathcal{C}_{\overline{A}})$, is the oriented matroid obtained from $\mathcal{M}$ by reversing the signs of all elements of $A$ in each signed circuit of $\mathcal{C}$.
For an orderly matroid $\mathfrak{M}=(E,\mathcal{C},\omega)$ and $A\subseteq E$, we define the \textit{reorientation} at $A$ by reorienting the underlying oriented matroid $\mathcal{M}=(E,\mathcal{C})$ at $A$ and keeping the same ordering $\omega$.
This gives another orderly matroid $\mathfrak{M}_{\overline{A}}=(E,\mathcal{C}_{\overline{A}},\omega)$, in the sense that $\omega$ is compatible with the reorientation.
More precisely, if $\omega(C)=\order{e_{i_1}^{\epsilon_{i_1}},\dots,e_{i_n}^{\epsilon_{i_n}}}$
is an ordered signed circuit of $\mathfrak{M}$, then the corresponding ordered signed circuit of $\mathfrak{M}_{\overline A}$ is
$$\omega(C_{\overline A})=\order{e_{i_1}^{a_{i_1}\epsilon_{i_1}},\dots,e_{i_n}^{a_{i_n}\epsilon_{i_n}}},$$
where $a_{i_j}=-1$ if $e_{i_j}\in A$ and $a_{i_j}=+1$ if $e_{i_j}\notin A$.

The next statement shows that the OMG of an orderly matroid is an invariant of the reorientation class.
Note that graphic matroids have a single reorientation class; see \S\ref{subsec:graphic}.

\begin{lemma}\label{lem:reorientation}
    Let $\mathfrak{M}=(E,\mathcal{C},\omega)$ be an orderly matroid. For any $A\subseteq E$,  $\bbgm{\mathfrak{M}}\cong\bbgm{\mathfrak{M}_{\overline{A}}}$.
\end{lemma}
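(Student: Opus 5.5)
The plan is to write down an explicit isomorphism at the level of presentations: replace each generator $e\in A$ by its inverse and leave the other generators unchanged. Both groups have generating set $E$. Let $F(E)$ be the free group on $E$, and define an automorphism $\phi\colon F(E)\to F(E)$ by $\phi(e)=e^{-1}$ if $e\in A$ and $\phi(e)=e$ if $e\notin A$. Since $\phi$ is an involution, it is an automorphism of $F(E)$. It therefore suffices to show that $\phi$ maps the relator set $R_{\mathcal C,\omega}$ of $\bbgm{\mathfrak M}$ bijectively onto the relator set $R_{\mathcal C_{\overline A},\omega}$ of $\bbgm{\mathfrak M_{\overline A}}$. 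Then $\phi$ carries the normal closure of one set onto the normal closure of the other, and so descends to an isomorphism $\bbgm{\mathfrak M}\to\bbgm{\mathfrak M_{\overline A}}$.

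The check is direct. Take an ordered signed circuit $\omega(C)=\order{e_{i_1}^{\epsilon_{i_1}},\dots,e_{i_n}^{\epsilon_{i_n}}}$ and an integer $k\in\zz$, giving the relator $e_{i_1}^{\epsilon_{i_1}k}\cdots e_{i_n}^{\epsilon_{i_n}k}$. Applying $\phi$ yields $e_{i_1}^{a_{i_1}\epsilon_{i_1}k}\cdots e_{i_n}^{a_{i_n}\epsilon_{i_n}k}$, with $a_{i_j}=-1$ exactly when $e_{i_j}\in A$. By the description of reorientation above, this is the relator associated with $\omega(C_{\overline A})$ and the same $k$: the ordering $\omega$ is unchanged and only the signs are multiplied by $a_{i_j}$.

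Bijectivity follows from two observations. First, $C\mapsto C_{\overline A}$ is a bijection $\mathcal C\to\mathcal C_{\overline A}$, and applying it twice gives back $C$. Second, $\phi$ is an involution. Hence the map on relator sets is onto, with inverse given by the same construction applied to $\mathfrak M_{\overline A}$ and the same $A$.

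The only step that needs care, and the one I expect to be the main obstacle, is confirming that the ordering attached to $C_{\overline A}$ is literally the same cyclic order as the one attached to $C$. This holds by the definition of reorientation for orderly matroids, which keeps $\omega$ fixed. One should also check consistency with the convention that $\omega(-C)$ is the reverse of $\omega(C)$. Reorientation commutes with negation, since $(-C)_{\overline A}=-(C_{\overline A})$, so this convention is preserved. In any case, the relators coming from $\omega(C)$ and $\omega(-C)$ are inverses of each other, by the remark preceding Example~\ref{ex: OMG uniform}, so the normal closures agree regardless.
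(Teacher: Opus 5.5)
Your proposal is correct and matches the paper's proof: the paper uses the same involutive automorphism $\phi_A$ of $F(E)$ that inverts the elements of $A$, checks that it sends the relators of $\omega(C)$ to those of $\omega(C_{\overline A})$, and uses $\phi_A=\phi_A^{-1}$ to build the inverse homomorphism. Your argument that $\phi$ maps relator set onto relator set is just a slightly more explicit packaging of that same step.
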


\begin{proof}
    Let $E=\{e_1,\dots,e_n\}$, and let $F(E)$ be the free group generated by $E$. Define an automorphism $\phi_A\colon F(E)\to F(E)$ by 
    $$
    \phi_A(e_i)
    =
    \begin{cases}
    e_i^{-1}, & e_i\in A,
    \\ 
    e_i, & e_i\notin A.
    \end{cases}
    $$
    The map $\phi_A$ sends  each relator in the presentation \eqref{eq:OMG} of $\bbgm{\mathfrak{M}}$ associated to $\omega (C)$ to the corresponding relator in the presentation  of $\bbgm{\mathfrak{M}_{\overline{A}}}$ associated to $\omega (C_{\overline A})$.
    Thus, the map $\phi_A$
    descends to a homomorphism $\bbgm{\mathfrak{M}} \to \bbgm{\mathfrak{M}_{\overline{A}}}$. Since $\phi_A=\phi_A^{-1}$, we can construct a homomorphism $\bbgm{\mathfrak{M}_{\overline{A}}}\to\bbgm{\mathfrak{M}}$ in a similar way. Hence, $\bbgm{\mathfrak{M}}\cong\bbgm{\mathfrak{M}_{\overline{A}}}$.
\end{proof}

\subsubsection{Cycle matroids}\label{sec:cycle matroids}\label{subsec:graphic}
We now give the main motivating example for Definition~\ref{def: orderly matroid}.
The \textit{cycle matroid} $M_\Gamma$ of a graph $\Gamma$ is the matroid with ground set $\ee \Gamma$, and the set of circuits $\mathcal{C}(\Gamma)$ consists of the edge sets of cycles in $\Gamma$.
Here, we do not assume that $\Gamma$ is simplicial.

A \textit{digraph} is a graph with a chosen orientation of each edge.
Given this orientation, the edge set of each cycle can be partitioned into positive and negative parts, according to whether the edges are traversed with or against their chosen orientations.
Thus, every digraph $\Gamma$ defines an oriented matroid $\mathcal M_\Gamma$, called the \textit{oriented cycle matroid} of $\Gamma$. 
Moreover, every oriented cycle comes with a natural cyclic ordering $\omega_\Gamma$, called the \textit{graphic ordering}; this is a compatible ordering.
Hence, every digraph $\Gamma$ defines an orderly matroid $\mathfrak M_\Gamma=(\ee\Gamma, \mathcal{C}(\Gamma),{\omega_\Gamma})$, called the \textit{orderly cycle matroid} of $\Gamma$.

Note that if $\Gamma$ is a digraph, not necessarily simplicial, and $\Gamma'$ is the simplicial digraph obtained from $\Gamma$ by removing all loops and identifying parallel edges, then $\bbgm{\mathfrak M_\Gamma} \cong \bbgm{\mathfrak M_{\Gamma'}}$. This can be shown directly from the relators in the presentation \eqref{eq:OMG}. Also, compare Remark~\ref{remark:non simplicial RAAG BBG}. 

\begin{example}\label{ex:OOM}\label{ex: OMG graphic}
Consider the digraph $\Gamma$ shown in Figure~\ref{fig: two triangles}. 
The orderly cycle matroid $\mathfrak M_\Gamma$ of $\Gamma$ has ground set $\ee\Gamma=\{a,b,c,d,e\}$. 
The signed circuits are cyclically ordered by the graphic ordering $\omega_\Gamma$ as follows:
$$
\begin{aligned}
\omega_\Gamma(C_1)&=\order{a^{+1},d^{+1},b^{+1}},
&\;
\omega_\Gamma(-C_1)&=\order{b^{-1},d^{-1},a^{-1}},\\
\omega_\Gamma(C_2)&=\order{b^{+1},c^{-1},e^{-1}},
&\;
\omega_\Gamma(-C_2)&=\order{e^{+1},c^{+1},b^{-1}},\\
\omega_\Gamma(C_3)&=\order{a^{+1},d^{+1},e^{+1},c^{+1}},
&\;
\omega_\Gamma(-C_3)&=\order{c^{-1},e^{-1},d^{-1},a^{-1}}.
\end{aligned}
$$

Note that $\omega_\Gamma$ is the only compatible ordering on this oriented matroid: the ordering of the square $C_3$ is determined by the ordering of the two triangles $C_1$ and $C_2$.
The group $\bbgm{\mathfrak{M}_\Gamma}$ is generated by $E=\{a,b,c,d,e\}$, and the relators associated to $\omega_\Gamma(C_i)$ are $a^kd^kb^k$, $b^kc^{-k}e^{-k}$ and $a^kd^ke^kc^k$ for each $k\in\zz$.
Using $k=\pm1$ in the first two sets of relators gives $d=a^{-1}b^{-1}=b^{-1}a^{-1}$ and $e=bc^{-1}=c^{-1}b$. Thus, both $\{a,b,d\}$ and $\{b,c,e\}$ generate a $\zz^2$ subgroup. In particular, the generator $b$ is central.
Therefore, the presentation reduces to $\langle a,b,c\mid [a,b], [b,c]\rangle$. Hence,  $\bbgm{\mathfrak{M}_\Gamma}\cong \mathbb{Z}\times \ff_2$.

For completeness, here is a consistent but not compatible ordering $\omega'$ of $\mathfrak M_\Gamma$: 
$$ 
\omega'(\pm C_i)= \omega_\Gamma (\pm C_i) \textrm{ for } i=1,2,$$
$$\omega'(C_3)=\order{a^{+1},d^{+1},c^{+1},e^{+1}}, \quad \omega'(-C_3)=\order{e^{-1},c^{-1},d^{-1},a^{-1}}.
$$
The reader can verify that the presentation with respect to $\omega'$ also gives $\mathbb{Z}\times \ff_2$.
\end{example}

\begin{figure}[ht!]
\centering\usetikzlibrary{decorations.markings, arrows.meta}

\tikzset{
    mid arrow/.style={
        postaction={
            decorate,
            decoration={
                markings,
                mark=at position 0.55 with {\arrow[scale=1]{Triangle}}
            }
        }
    }
}

\begin{tikzpicture}[scale=0.7]
\draw[thick, mid arrow] (3,3) -- (0,1.5);
\draw[thick, mid arrow] (0,1.5) -- (3,0);
\draw[thick, mid arrow] (3,0) -- (6,1.5);
\draw[thick, mid arrow] (6,1.5) -- (3,3);
\draw[thick, mid arrow] (3,0) -- (3,3);
\draw [fill] (0,1.5) circle [radius=0.15];
\draw [fill] (3,0) circle [radius=0.15];
\draw [fill] (3,3) circle [radius=0.15];
\draw [fill] (6,1.5) circle [radius=0.15];
\node [above left] at (1.5,2.25) {$a$};
\node [below left] at (1.5,0.75) {$d$};
\node [below right] at (4.5,0.75) {$e$};
\node [above right] at (4.5,2.25) {$c$};
\node [left] at (3,1.6) {$b$};
\end{tikzpicture}
    \caption{A digraph.}
    \label{fig: two triangles}
\end{figure}

We are ready to prove Theorem~\ref{mainthm matroid intro}, that is, the construction developed so far recovers the BBG of a graph.
Recall from Remark~\ref{remark:non simplicial RAAG BBG} that $\bbg \Gamma$ is defined even when $\Gamma$ is a non-simplicial graph.

\begin{proof}[Proof of Theorem~\ref{mainthm matroid intro}]
We know from Lemma~\ref{lem:reorientation} that 
the assignment $\mathfrak M\mapsto \bbgm{\mathfrak M}$ is insensitive to reorientation. Thus, the bottom-right map in the diagram is well-defined.
Moreover, every oriented cycle matroid has a unique reorientation class (see \cite[Proposition 6.2]{BLV78}), so the bottom-left map in the diagram is also well-defined.

Now, let $\Gamma$ be a connected graph and fix an arbitrary orientation of the edges of $\Gamma$. 
Let $\mathfrak M_\Gamma=(\ee\Gamma,\mathcal C(\Gamma), \omega_\Gamma)$ be the associated orderly cycle matroid, and let $\bbgm{\mathfrak M_\Gamma}$ be its OMG. 
As noted above, the group $\bbgm{\mathfrak M_\Gamma}$ does not depend on the chosen orientation.
As observed in \cite{DicksLeary99}, the Dicks--Leary presentation \eqref{eq:infdicksleary} for the group $\bbg \Gamma$ can be simplified as follows: use the relators in $I_\Gamma$ to halve the number of generators, and then introduce signs into the exponents of the relators in $R_\Gamma$. This yields the presentation \eqref{eq:OMG} of $\bbgm {\mathfrak M_\Gamma}$.
\end{proof}

This means that the BBG of a graph depends only on the orderly cycle matroid of the defining graph, that is, the cycle matroid equipped with the graphic ordering.
A natural question is how much of the orderly structure is actually needed. In the next section, we clarify the role of the ordering and in particular, we show that for a graphic matroid, it corresponds to the choice of a defining graph.
Remark~\ref{jialin} shows that in general this choice is delicate. However, the choice of ordering turns out to be irrelevant for finitely presented BBGs (the ones defined by simply connected flag complexes), as shown in the next result.

\begin{corollary}\label{cor: s.c. OMG}
Let $\Gamma_1$ and $\Gamma_2$ be  simplicial graphs with simply connected flag complexes and isomorphic cycle matroids.
Then, $ \bbgm{\mathfrak M_{\Gamma_1}} \cong \bbgm{\mathfrak M_{\Gamma_2}}$.
\end{corollary}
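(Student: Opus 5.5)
By Theorem~\ref{mainthm matroid intro}, it suffices to show $\bbg{\Gamma_1}\cong\bbg{\Gamma_2}$, since $\bbgm{\mathfrak M_{\Gamma_i}}\cong\bbg{\Gamma_i}$. The plan is to reduce to biconnected pieces and then invoke Whitney's 2-isomorphism theorem together with Proposition~\ref{prop:2isobbg}.

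\textbf{Step 1: reduce to connected graphs.} Isolated vertices do not change the cycle matroid. Every component of a disconnected graph can be wedged at a vertex to give a connected graph with the same cycle matroid. Wedging at a cut vertex preserves simple connectivity of the flag complex, since the flag complex of a wedge is the wedge of the flag complexes. It also yields the free product of BBGs, as noted after Proposition~\ref{free}. So we may assume $\Gamma_1,\Gamma_2$ are connected. (Alternatively, one could use the OMG presentation directly: the circuits live in single components, so $\bbgm{\mathfrak M}$ splits as a free product over matroid components.)

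\textbf{Step 2: match the biconnected components.} For a connected graph, the connected components of the cycle matroid correspond to the biconnected components (blocks) that are not bridges, with each bridge giving a coloop. By the observation after Proposition~\ref{free}, $\bbg{\Gamma_i}$ is the free product of the BBGs of its blocks, with each bridge contributing a $\zz$. An isomorphism $M_{\Gamma_1}\cong M_{\Gamma_2}$ restricts to isomorphisms between the matroid components, and hence gives a bijection between the blocks of $\Gamma_1$ and those of $\Gamma_2$, with isomorphic cycle matroids and matching numbers of bridges. Each block $B$ is a full subgraph, and $\flag B$ is a retract of $\flag{\Gamma_i}$ (collapse along the block tree), so $\flag B$ is simply connected.

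\textbf{Step 3: compare corresponding blocks.} For corresponding biconnected blocks $B_1,B_2$, Whitney's 2-isomorphism theorem \cite[Theorem 5.3.1]{Oxley} shows that $B_1$ and $B_2$ are 2-isomorphic, so they are related by a sequence of Whitney twists. Proposition~\ref{prop:2isobbg} requires simple connectivity of every graph along this sequence, not just of the endpoints. This is the main obstacle. The proof of Proposition~\ref{prop:2isobbg} uses only that the twisting pair $\{u,v\}$ is an edge that separates the graph. I would argue as follows.
\begin{itemize}
\item A Whitney twist about an adjacent pair $\{u,v\}$ glues the same pieces $\Lambda_1,\Lambda_2$ along an edge. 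The flag complex of the glued graph is then the union of $\flag{\Lambda_1}$ and $\flag{\Lambda_2}$ along that edge: any triangle meeting both sides would need a vertex adjacent across the separator, which is impossible. By van Kampen, simple connectivity of the glued graph is therefore equivalent to simple connectivity of both pieces, and this condition is invariant under the twist.
\item If a twist about a non-adjacent pair occurs in the sequence, then the flag complex has a nontrivial loop through $u$ and $v$ and is not simply connected. I would therefore choose the twisting sequence so that it only uses adjacent pairs.
\end{itemize}

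For the second point, the plan is to use the structure of 2-separations in a biconnected graph whose flag complex is simply connected. In such a graph, every 2-vertex cut $\{u,v\}$ must be an edge. Otherwise, take paths from $u$ to $v$ through two different sides of the cut. Together they form a cycle whose class in $H_1$, or more precisely in $\pi_1$, is nontrivial, by the van Kampen decomposition across the cut $\{u,v\}$, in which the two vertices meet no common simplex. Since every cut used by Whitney's theorem is a 2-vertex cut of the current graph, and all intermediate graphs remain simply connected by induction, every twist in the sequence is about an edge. Applying Proposition~\ref{prop:2isobbg} along the sequence then gives $\bbg{B_1}\cong\bbg{B_2}$, and assembling the free products finishes the proof.
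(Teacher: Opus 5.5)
Your proof is correct and takes the same route as the paper: reduce to biconnected blocks, apply Whitney's 2-isomorphism theorem and Proposition~\ref{prop:2isobbg}, then pass back to OMGs via Theorem~\ref{mainthm matroid intro}. Your Step 3 is a useful addition: every nontrivial twist is about a $2$-cut, which must be an edge by the Mayer--Vietoris/van Kampen argument, and twisting about an edge preserves simple connectivity (put the edge $\{u,v\}$ into both pieces, so that triangles on $\{u,v\}$ are not lost); this, together with your check that each block's flag complex is simply connected, makes explicit what the paper leaves inside the ``without loss of generality'' in the proof of Proposition~\ref{prop:2isobbg}.
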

\begin{proof}
If a graph has a cut vertex, then its cycle matroid is the direct sum of the cycle matroids of its biconnected components, and the associated OMG is a free product of the OMGs correspond to the biconnected components. Thus, we may assume that both $\Gamma_1$ and $\Gamma_2$ are biconnected.
Since $M_{\Gamma_1}\cong M_{\Gamma_2}$, Whitney's 2-isomorphism theorem implies that $\Gamma_1$ and $\Gamma_2$ are 2-isomorphic; see \cite[Theorem 5.3.1]{Oxley}. Since both $\flag{\Gamma_1}$ and $\flag{\Gamma_2}$ are simply connected, Proposition~\ref{prop:2isobbg} implies $\bbg{\Gamma_1}\cong\bbg{\Gamma_2}$.
Hence, we have $\bbgm{\mathfrak M_{\Gamma_1}}\cong \bbgm{\mathfrak M_{\Gamma_2}}$ by Theorem~\ref{mainthm matroid intro}.
\end{proof}

\subsection{Orderly graphs}\label{sec: orderly graph}
In this section, we show that given an orderly matroid, one can canonically construct a BBG that is a quotient of the OMG of the given orderly matroid. 
We do this by first defining a graph associated to the orderly matroid; this is where the notion of compatibility becomes relevant.
It turns out that the OMG of an orderly graphic matroid is isomorphic to the BBG obtained from this construction.

\begin{definition}[Orderly graph]\label{def:orderly graph}
Let  $\mathfrak M =(E,\mathcal C, \omega)$ be an orderly matroid.
Let $V=\{h_e,t_e \mid e\in E\}$, where $h_e$ and $t_e$ are formal symbols.
Let $\sim$ be the equivalence relation on $V$ generated by the following:
for each $C\in \mathcal C$ and  $e,f\in\underline{C}$,
\begin{equation}\label{eq:orderly graph relators}
\begin{cases}
h_e\sim t_f, & \text{if } f^{+1} \text{ follows } e^{+1} \text{ in } \omega(C),\\
t_e\sim h_f, & \text{if } f^{-1} \text{ follows }  e^{-1} \text{ in } \omega(C),\\
h_e \sim h_f, & \text{if } f^{-1} \text{ follows } e^{+1} \text{ in } \omega(C),\\
t_e \sim t_f, & \text{if } f^{+1} \text{ follows } e^{-1} \text{ in } \omega(C).
\end{cases}
\end{equation}
(If $C$ consists of a single element $e\in E$, then it is understood that $e^{+1}$ follows $e^{+1}$ in $\omega(C)$. Therefore, we have  $h_e\sim t_e$, that is, the element $e$ gives a loop.)
The \textit{orderly graph} $\shadow{\mathfrak M}$ of $\mathfrak M$ is the graph with vertex set $V/\sim$, and there is an edge between $[h_e]$ and $[t_e]$ for each $e\in E$. Notice that $\ee{\shadow{\mathfrak M}}=E$.
\end{definition}

\begin{remark}\label{rmk:TH orientation}
    We think of $h_e$ and $t_e$ as the ``head'' and ``tail'' of $e$. More formally, we orient every edge of $\shadow{ \mathfrak M}$ from tail to head.
    Then, each oriented edge of $\shadow{ \mathfrak M}$ with distinct endpoints corresponds to a unique generator (or the inverse of a generator) in $\bbgm{\mathfrak M}$. On the other hand, loops correspond to the trivial element in $\bbgm{\mathfrak M}$.
    Therefore, every oriented edge-path  in $\shadow{ \mathfrak M}$ corresponds to a unique  element in $\bbgm{\mathfrak M}$.
\end{remark}

\begin{remark}
The identifications in Definition~\ref{def:orderly graph} depend only on the elements $e$ and $f$ but not on the choice of the ordered signed circuit $C$.
For example, in the first case, the element $f^{+1}$ follows $e^{+1}$ in $\omega(C)$, then $e^{-1}$ follows $f^{-1}$ in $\omega(-C)$ by the definition of ordering. Since the ordering is compatible, if $e,f\in \underline{D}$ for another ordered signed circuit $D\in \mathcal C$, then $f^{+1}$ follows $e^{+1}$ in either $\omega(D)$ or $\omega(-D)$. 
\end{remark}

\begin{example}\label{prop:uniform_raag}
The orderly graphs of orderly uniform matroids can be constructed directly from the definition. (See  Example~\ref{ex: OMG uniform} for the associated OMGs.)
\begin{enumerate}
    \item \label{item:AU0n} 
    $\shadow{U_{0,n}}$ is a disjoint union of $n$ loops. 
        
    \item $\shadow{U_{1,n}}$ consists of two vertices joined by $n$ parallel edges. 
        
    \item $\shadow{U_{n-1,n}}$ is the cycle of length $n$ for $n\geq3$.
        
    \item $\shadow{U_{n,n}}$ consists of $n$ disjoint edges since there are no circuits.
\end{enumerate}
\end{example}

Definition~\ref{def:orderly graph} is inspired by cycle matroids.
We have the following lemma. 

\begin{lemma}\label{lem:orderly graph of a graph}
    Let $\Gamma$ be a  biconnected graph, and let $\mathfrak M_\Gamma$ be the associated orderly cycle matroid. Then, $\Gamma \cong \shadow{\mathfrak M_\Gamma}$.
\end{lemma}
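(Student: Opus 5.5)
We construct the isomorphism through a map $\Phi\colon V/{\sim}\to \vv\Gamma$, taking the identity on edges, since $\ee{\shadow{\mathfrak M_\Gamma}}=E=\ee\Gamma$. Fix the orientation of $\Gamma$ used to define $\mathfrak M_\Gamma$. For each edge $e$ let $\tau(e)$ and $\eta(e)$ be its tail and head in $\Gamma$, and define $\Phi$ on the formal symbols by $t_e\mapsto \tau(e)$ and $h_e\mapsto \eta(e)$. Then $\Phi$ induces a graph isomorphism provided three things hold: $\Phi$ respects $\sim$ (well-definedness), it is surjective on vertices, and it is injective on $V/{\sim}$. Once these hold, the edge $e$ joins $[t_e]$ and $[h_e]$ in $\shadow{\mathfrak M_\Gamma}$ and joins $\tau(e)$ and $\eta(e)$ in $\Gamma$, so incidence is preserved and we get $\Gamma\cong\shadow{\mathfrak M_\Gamma}$.

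\emph{Well-definedness.} It suffices to check the four generating identifications in \eqref{eq:orderly graph relators}. Under the graphic ordering, if $f^{\delta}$ follows $e^{\epsilon}$ in $\omega_\Gamma(C)$, then traversing the cycle $C$ takes us along $e$ (in direction $\epsilon$) and then along $f$ (in direction $\delta$). The vertex where $e$ is left is the vertex where $f$ is entered. This common vertex is $\eta(e)$ if $\epsilon=+1$ and $\tau(e)$ if $\epsilon=-1$; likewise it is $\tau(f)$ if $\delta=+1$ and $\eta(f)$ if $\delta=-1$. These four sign cases are exactly the four lines of \eqref{eq:orderly graph relators}. A loop circuit $\order{e}$ gives $h_e\sim t_e$, which is consistent because $\tau(e)=\eta(e)$. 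Hence $\Phi$ respects $\sim$.

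\emph{Surjectivity.} A biconnected graph has no isolated vertices, unless it is a single vertex, a degenerate case handled trivially. So every vertex is an endpoint of some edge and lies in the image of $\Phi$.

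\emph{Injectivity.} This is the main step, and it is where biconnectivity is used. Suppose $x,y\in V$ satisfy $\Phi(x)=\Phi(y)=w$, so that $x$ is an endpoint-symbol of some edge $e$ at $w$ and $y$ of some edge $f$ at $w$. We must show $x\sim y$.
\begin{itemize}
\item If $e$ and $f$ are distinct edges at $w$, biconnectivity gives a cycle $C$ through both, since any two edges of a biconnected graph lie on a common cycle. If $w$ has degree at least two in the cycle, we may moreover choose $C$ so that $e$ and $f$ are consecutive at $w$. To do this, take a path from the far endpoint of $e$ to the far endpoint of $f$ in $\Gamma-w$, which is connected because $w$ is not a cut vertex.
\item In that cycle, $f^{\pm}$ follows $e^{\pm}$ (or the reverse) at the shared vertex $w$. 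The corresponding line of \eqref{eq:orderly graph relators} then identifies precisely the symbols of $e$ and $f$ sitting at $w$, so $x\sim y$.
\item Special cases need separate handling: parallel edges, which pair up through a $2$-circuit, and loops, where $h_e\sim t_e$ already.
\item If $e=f$ then $x=y$, unless $e$ is a loop, which is covered above.
\end{itemize}

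The delicate point is ensuring that the far endpoints of $e$ and $f$ are distinct from $w$ and joined in $\Gamma-w$. If they coincide, $e$ and $f$ are parallel, and the $2$-cycle $\{e,f\}$ does the job. The degenerate cases, namely a single edge (no circuits, $\Gamma=K_2$) and a single vertex, are checked directly.
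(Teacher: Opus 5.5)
Your proof is correct and is essentially the paper's argument run in the opposite direction. The paper defines $\alpha\colon\vv\Gamma\to\vv{\shadow{\mathfrak M_\Gamma}}$, so your well-definedness check (the identifications in \eqref{eq:orderly graph relators} only join symbols at a common vertex) is the paper's injectivity step, and your injectivity step (biconnectivity makes any two edges at a vertex consecutive on a common cycle) is the paper's well-definedness step, which you justify in more detail. One small correction: the edgeless one-vertex graph is not ``checked directly'' but is a genuine degenerate exception, since its orderly graph has no vertices at all.
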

\begin{proof}
    Fix an arbitrary orientation of the edges of $\Gamma$ and define a map $\alpha\colon\vv \Gamma\to \vv{\shadow{\mathfrak M_\Gamma}}$ as follows.
    Let $v\in \vv \Gamma$, and let $e$ be an edge incident to $v$. If $e$ is outgoing from $v$, then set $\alpha(v)=[t_e]$; if $e$ is incoming to $v$, then set $\alpha(v)=[h_e]$.
    Since $\Gamma$ is biconnected, any two edges meeting at $v$ belong to a common cycle. Therefore, the equivalence relation $\sim$ identifies all the heads and tails based at $v$. Thus, the map $\alpha$ is well-defined and is surjective by construction.
    Since the identification defining $\sim$ occurs only when edges of $\Gamma$ meet at a common vertex, each equivalence class consists of formal heads and tails based at a single vertex of $\Gamma$. It follows that $\alpha$ is injective. 
    Moreover, by  construction, the edge set of $\shadow{\mathfrak M_\Gamma}$ coincides with the ground set of $\mathfrak M_\Gamma$, which is the edge set of $\Gamma$. Therefore, we can extend $\alpha$ to a graph isomorphism $\Gamma\to \shadow{\mathfrak M_\Gamma}$.
\end{proof}

\subsubsection{Mapping to a BBG}
We now consider the RAAGs defined by orderly graphs.
Recall from Remark~\ref{remark:non simplicial RAAG BBG} that RAAGs are defined even if their defining graphs are not simplicial.
We say that an orderly matroid is \textit{connected} if any two elements of the ground set are contained in the support of a common ordered signed circuit.

\begin{lemma}\label{lem:connected}
If $\mathfrak M= (E,\mathcal C, \omega)$ is a connected orderly matroid, then $\shadow{\mathfrak M}$ is connected.
\end{lemma}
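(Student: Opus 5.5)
The plan is to show directly that any two edges of $\shadow{\mathfrak M}$ lie in a common connected subgraph. Every vertex of $\shadow{\mathfrak M}$ is an equivalence class $[h_e]$ or $[t_e]$ for some $e\in E$, and each edge $e$ joins $[t_e]$ to $[h_e]$. It therefore suffices to show that for any $e,f\in E$ there is a path in $\shadow{\mathfrak M}$ from an endpoint of $e$ to an endpoint of $f$. Every vertex is then joined to every other through the edges carrying it. (If $E$ has a single element, $\shadow{\mathfrak M}$ is one edge or one loop and is connected; the case $E=\emptyset$ is trivial.)

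Fix $e,f\in E$. By connectedness of $\mathfrak M$ there is a signed circuit $C$ with $e,f\in\underline{C}$. Write $\omega(C)=\order{e_{i_1}^{\epsilon_{i_1}},\dots,e_{i_n}^{\epsilon_{i_n}}}$. The key step is to check, for each $j$, that the consecutive pair $e_{i_j}^{\epsilon_{i_j}}$, $e_{i_{j+1}}^{\epsilon_{i_{j+1}}}$ forces an endpoint of $e_{i_j}$ to be identified with an endpoint of $e_{i_{j+1}}$. This is a case check against the four rules in \eqref{eq:orderly graph relators}, one rule for each sign pattern $(\epsilon_{i_j},\epsilon_{i_{j+1}})\in\{\pm1\}^2$:
\begin{itemize}
\item for $(+,+)$, rule one gives $h\sim t$;
\item for $(-,-)$, rule two gives $t\sim h$;
\item for $(+,-)$, rule three gives $h\sim h$;
\item for $(-,+)$, rule four gives $t\sim t$.
\end{itemize}
In other words, the "exit" endpoint of $e_{i_j}$ (its head if $\epsilon_{i_j}=+1$, its tail otherwise) coincides with the "entry" endpoint of $e_{i_{j+1}}$.

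Consequently, traversing the edges $e_{i_1},\dots,e_{i_n}$ in order, each in the direction given by its sign, produces a closed walk in $\shadow{\mathfrak M}$. This walk passes through both $e$ and $f$, so their endpoints lie in the same component. Since $e$ and $f$ were arbitrary, all edges, and hence all vertices, lie in one component.

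The only delicate point, which I expect to be the main obstacle though it is minor, is making sure the case analysis exhausts all sign patterns and handles degenerate circuits. For $n=1$, the convention in Definition~\ref{def:orderly graph} gives a loop, which is harmless. For $n=2$, the two consecutive pairs yield both identifications. Compatibility of $\omega$ is not needed for this argument; it only guarantees that the identifications are independent of the chosen circuit, and connectivity does not rely on that.
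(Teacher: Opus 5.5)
Your proof is correct and takes the same route as the paper's: choose a circuit whose support contains both $e$ and $f$ (using connectedness of $\mathfrak M$), and read its ordering as an edge-path in $\shadow{\mathfrak M}$ joining their endpoints. Your sign-pattern case check, showing that the exit endpoint of $e_{i_j}$ is identified with the entry endpoint of $e_{i_{j+1}}$, supplies the detail that the paper compresses into the single sentence ``this provides (at least) one edge-path.''
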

\begin{proof}
Let $v=[t_e]$ and $w=[h_f]$ be two distinct vertices of $\shadow{\mathfrak M}$ for some $e,f\in E$. If $e=f$, then $v$ and $w$ are adjacent in $\shadow{\mathfrak M}$. If $e\neq f$, then there is an ordered signed circuit whose support contains both $e$ and $f$ since $\mathfrak M$ is connected. 
This provides (at least) one  edge-path in $\shadow{\mathfrak M}$ from $v$ to $w$. Thus, the graph $\shadow{\mathfrak M}$ is connected.
\end{proof}

\begin{proposition}\label{prop:AOMG=RAAG}
Let $\mathfrak M = (E,\mathcal C, \omega)$ be an orderly matroid. Consider the map  $\phi\colon E\to \raag{\shadow{\mathfrak M}}$ defined by $\phi(e)=[t_e][h_e]^{-1}$. Then the following statements hold.
\begin{enumerate}
    
    \item \label{item: AOMG map} The map  $\phi\colon E\to \raag{\shadow{\mathfrak M}}$ extends uniquely to a  homomorphism $\phi\colon \bbgm{\mathfrak M}\to \raag{\shadow{\mathfrak M}}$.

    \item \label{item: AOMG to BBG} $\mathrm{im}(\phi) \subseteq \bbg{\shadow{\mathfrak M}}$.

    \item \label{item: AOMG connected} If $\mathfrak M$ is connected, then $\mathrm{im}(\phi) = \bbg{\shadow{\mathfrak M}}$.
    
    \item \label{item: AOMG disconnected}  If $\mathfrak M$  has connected components $\mathfrak M_1,\dots,\mathfrak M_n$, then $$\mathrm{im}(\phi) \cong \Asterisk_{i=1}^n \bbg{\shadow{\mathfrak{M}_i}}\cong \bbg{\bigvee_{i=1}^n \shadow{\mathfrak M_i}}.$$ 
\end{enumerate} 
\end{proposition}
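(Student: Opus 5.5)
We verify each part by direct computation with the presentation \eqref{eq:OMG} and Definition~\ref{def:orderly graph}. Throughout we write $v_e=[t_e]$, $w_e=[h_e]$ for the endpoints of $e$ in $\shadow{\mathfrak M}$, so $\phi(e)=v_ew_e^{-1}$ and $\phi(e)^{-1}=w_ev_e^{-1}$. Note that a loop ($v_e=w_e$) gives $\phi(e)=1$, consistent with Remark~\ref{rmk:TH orientation}.

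For \eqref{item: AOMG map}, let $\omega(C)=\order{e_{1}^{\epsilon_{1}},\dots,e_{n}^{\epsilon_{n}}}$ be an ordered signed circuit. Assign to each letter $e_j^{\epsilon_j}$ a start vertex $s_j$ and an end vertex $r_j$: for $\epsilon_j=+1$ take $(s_j,r_j)=(v_{e_j},w_{e_j})$, and for $\epsilon_j=-1$ take $(s_j,r_j)=(w_{e_j},v_{e_j})$. The four cases in \eqref{eq:orderly graph relators} say exactly that $r_j=s_{j+1}$, indices mod $n$. So the circuit traces a closed edge-path $s_1,s_2,\dots,s_n,s_1$ in $\shadow{\mathfrak M}$, with $\phi(e_j^{\epsilon_j})=s_js_{j+1}^{-1}$.

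Consecutive vertices $s_j,s_{j+1}$ are joined by an edge of $\shadow{\mathfrak M}$, so they commute in $\raag{\shadow{\mathfrak M}}$ (trivially when they are equal). Hence $(s_js_{j+1}^{-1})^k=s_j^ks_{j+1}^{-k}$, and the relator maps to the telescoping product
\[
\prod_{j=1}^n s_j^k s_{j+1}^{-k}=s_1^ks_{n+1}^{-k}=1 .
\]
Thus every relator dies, and $\phi$ extends. Uniqueness is automatic because $E$ generates $\bbgm{\mathfrak M}$.

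For \eqref{item: AOMG to BBG}, observe that $\chi(\phi(e))=1-1=0$. For \eqref{item: AOMG connected}, $\shadow{\mathfrak M}$ is connected by Lemma~\ref{lem:connected}. Dicks--Leary (or \cite{BB1997}) shows that for a connected graph $\bbg{\shadow{\mathfrak M}}$ is generated by the elements $uw^{-1}$ over oriented edges $(u,w)$. Each such element is $\phi(e)^{\pm1}$, since every edge of $\shadow{\mathfrak M}$ is some $e\in E$. This gives equality. The one point to check is that the generator convention $uw^{-1}$ versus $u^{-1}w$ is harmless. It is, because commuting adjacent generators give $u^{-1}w=wu^{-1}$ up to inversion.

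For \eqref{item: AOMG disconnected}, which I expect to be the main obstacle, we decompose the matroid. The circuits of $\mathfrak M$ are the union of the circuits of the components, so $\bbgm{\mathfrak M}=\Asterisk_i\bbgm{\mathfrak M_i}$. Likewise, the relation $\sim$ only identifies symbols within a single component, so $\shadow{\mathfrak M}=\bigsqcup_i\shadow{\mathfrak M_i}$ and $\raag{\shadow{\mathfrak M}}=\Asterisk_i\raag{\shadow{\mathfrak M_i}}$. The map $\phi$ is the free product of the maps $\phi_i$, so its image is the subgroup generated by the images $\bbg{\shadow{\mathfrak M_i}}$ of the factors, by \eqref{item: AOMG connected}.

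Subgroups of distinct free factors generate their free product inside the ambient free product. This follows from the normal form theorem, or from the action on the Bass--Serre tree. Hence $\mathrm{im}(\phi)\cong\Asterisk_i\bbg{\shadow{\mathfrak M_i}}$.

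Finally, the wedge $\bigvee_i\shadow{\mathfrak M_i}$ is glued at cut vertices. As noted in \S\ref{sec: presentation BBG} via the Dicks--Leary presentation, the BBG of a connected graph is the free product of the BBGs of its biconnected pieces. Regrouping the pieces gives the second isomorphism. One caveat: a component with no circuits, a single coloop $e$, gives a single edge, whose BBG is $\zz$. This case needs to be checked against \eqref{item: AOMG connected}, and it holds since $\phi(e)$ generates.
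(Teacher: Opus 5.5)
Your proposal is correct and follows essentially the same route as the paper: a telescoping computation along the closed edge-path traced by each ordered signed circuit for (1), $\chi(\phi(e))=0$ for (2), and the decompositions $\bbgm{\mathfrak M}\cong\Asterisk_i\bbgm{\mathfrak M_i}$ and $\shadow{\mathfrak M}=\bigsqcup_i\shadow{\mathfrak M_i}$ for (4). The differences are cosmetic: in (3) you cite the Dicks--Leary generating set $\{uw^{-1}\}$, whereas the paper reruns their induction on a word $v_1^{n_1}\cdots v_m^{n_m}$; and in (4) you make explicit, via normal forms, that the free product of the inclusions $\bbg{\shadow{\mathfrak M_i}}\hookrightarrow\raag{\shadow{\mathfrak M_i}}$ is injective, a step the paper leaves implicit.
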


\begin{proof}
We need to verify that $\phi$ sends the relators in the presentation of $\bbgm{\mathfrak M}$ from Definition~\ref{def:OMG} to the identity.
For each  ordered signed circuit $\omega(C)=\order{e_1^{\epsilon_1},e_2^{\epsilon_2},\dots,e_n^{\epsilon_n}}$  and  $k\in\zz$, we have the relator $
e_1^{\epsilon_1k}e_2^{\epsilon_2k}\cdots e_n^{\epsilon_nk}$.
    Since $[t_e]$ and $[h_e]$ are either equal or adjacent in $\shadow{\mathfrak M}$, they commute in $\raag{\shadow{\mathfrak M}}$, so for each $i=1,\dots,n$, we have
    $$
    \phi(e_i^{\epsilon_i k})=
    \begin{cases}
    [t_{e_i}]^k[h_{e_i}]^{-k}, & \epsilon_i=1,\\[4pt]
    [h_{e_i}]^k[t_{e_i}]^{-k}, & \epsilon_i=-1.
    \end{cases}
    $$
    In either case, the element  $e_{i+1}^{\epsilon_{i+1}}$ follows $e_i^{\epsilon_i}$ (indices modulo $n$), so the relators in~\eqref{eq:orderly graph relators} in Definition~\ref{def:orderly graph} identify the second half of $\phi(e_i^{\epsilon_ik})$ with the inverse of the first half of $\phi(e_{i+1}^{\epsilon_{i+1}k})$. Therefore, these two adjacent factors cancel, and we obtain $\phi(e_1^{\epsilon_1k}e_2^{\epsilon_2k}\cdots e_n^{\epsilon_nk})=1$.  
    This proves \eqref{item: AOMG map}.

    We now consider the image of $\phi$. 
    Let $\chi\colon \raag{\shadow{\mathfrak M}}\to \zz$ be the homomorphism sending every generator to $1$. 
    Then by definition $\bbg{\shadow{\mathfrak M}} = \ker\chi$. For each $e\in E$, we have $\chi(\phi(e))=\chi([t_e][h_e]^{-1})=1-1=0$. Therefore, $\mathrm{im}(\phi)\subseteq\bbg{\shadow{\mathfrak M}}$.
    This proves \eqref{item: AOMG to BBG}.
    
    To prove \eqref{item: AOMG connected}, we follow the proof of \cite[Theorem 1]{DicksLeary99}.
    Let $h\in \bbg{\shadow{\mathfrak M}}=\ker\chi$.
    Then $h=v_1^{n_1}\cdots v_m^{n_m}$ for some $v_1,\dots,v_m\in\vv{\shadow{\mathfrak M}}$ and $n_1,\dots,n_m\in\zz$ with $n_1+\dots + n_m=0$.
    Since $\shadow{\mathfrak M}$ is connected  by Lemma~\ref{lem:connected}, we can choose an edge-path  from $v_m$ to $v_{m-1}$ in $\shadow{\mathfrak M}$, and let $f_1,\dots,f_r \in E \cup E^{-1}\subseteq \bbgm{\mathfrak M}$ be the elements that correspond to its edges (in the sense of Remark~\ref{rmk:TH orientation}).
    Let $x=f_1^{-n_m}\cdots f_r^{-n_m}  \in \bbgm{\mathfrak M}$. Then $\phi(x)=v_m^{-n_m}v_{m-1}^{n_m}$, and it follows by induction on $m$ that $h\in \textrm{im}(\phi)$.
    This proves \eqref{item: AOMG connected}.

    Finally, assume that $\mathfrak M$ is disconnected, and let $\mathfrak M_1,\dots, \mathfrak M_n$ be its connected components (see \cite[\S 4.2]{Oxley} for the definitions).
    Since no ordered signed circuit contains elements from two different connected components, it follows from the presentation \eqref{eq:OMG} that $\bbgm{\mathfrak M} \cong \Asterisk_{i=1}^n \bbgm{\mathfrak M_i}$ and $\shadow{\mathfrak M} = \bigsqcup_{i=1}^n \shadow{\mathfrak M_i}$.
    By \eqref{item: AOMG connected}, there is a surjection $\phi_i\colon\bbgm{\mathfrak M_i} \to \bbg{\Gamma_{\mathfrak M_i}}$ for each $i=1,\dots,n$, and the free product of $\phi_1,\dots,\phi_n$ gives a surjection $\bbgm{\mathfrak M} \to \Asterisk_{i=1}^n \bbg{\Gamma_{\mathfrak M_i}}$.
    Note that  
    $\Asterisk_{i=1}^n \bbg{\Gamma_{\mathfrak M_i}}$ is the BBG associated to the graph $\bigvee_{i=1}^n \shadow{\mathfrak M_i}$, which is obtained from $\shadow{\mathfrak M} = \bigsqcup_{i=1}^n \shadow{\mathfrak M_i}$ by identifying one  vertex from each $\shadow{\mathfrak M_i}$. This proves \eqref{item: AOMG disconnected}.
\end{proof}

In particular, we obtain the following criteria for estimating the size of the OMG. In general, the cardinality of the ground set of $\mathfrak M$ does not give a lower bound on the rank of $\bbgm{\mathfrak M}$. For example, $\bbgm{U_{0,n}}=1$ for all $n\in \nn$; see Example~\ref{ex: OMG uniform}.

\begin{corollary}
Let $\mathfrak M$ be an orderly matroid. The following statements hold.
\begin{enumerate}
    \item If $\shadow{\mathfrak M}$ retracts to a subclique on $k+1$ vertices, then 
    $\bbgm{\mathfrak M}$ surjects $\zz^k$.

    \item If $\shadow{\mathfrak M}$ retracts to a subtree with $k+1$ vertices, then $\bbgm{\mathfrak M}$ surjects $\ff_k$.
    \end{enumerate}
\end{corollary}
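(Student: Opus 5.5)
The plan is to compose the surjection of Proposition~\ref{prop:AOMG=RAAG} with an explicit surjection from the BBG of $\shadow{\mathfrak M}$ onto $\zz^k$ or $\ff_k$. First I would reduce to the connected case, assuming $\mathfrak M$ connected or, in general, working with a component. By item~\eqref{item: AOMG connected} (or item~\eqref{item: AOMG disconnected} combined with the free-product projection onto one factor), $\phi$ maps $\bbgm{\mathfrak M}$ onto $\bbg{\shadow{\mathfrak M}}$ (or onto $\bbg{\bigvee_i\shadow{\mathfrak M_i}}$). A retraction of a graph onto a subclique or subtree of one component extends to the wedge by collapsing the other components to the wedge point. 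So it suffices to show that $\bbg{\Lambda}$ surjects onto $\zz^k$ (respectively $\ff_k$) whenever a graph $\Lambda$ retracts onto a clique $K$ with $k+1$ vertices (respectively a tree $T$ with $k+1$ vertices).

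Next, a graph retraction $r\colon\Lambda\to\Lambda'$ onto a full subgraph, which sends adjacent vertices to adjacent or equal vertices, induces a homomorphism $r_*\colon\raag\Lambda\to\raag{\Lambda'}$ on generators. Commutation relations are preserved because adjacent vertices go to adjacent or equal vertices, and equal vertices commute trivially. This is the one place where Remark~\ref{remark:non simplicial RAAG BBG} and the allowance of loops and parallel edges are needed, and I expect it to be the only point requiring care. Since $r$ restricts to the identity on $\Lambda'$, the map $r_*$ is a retraction of groups, hence surjective. It also commutes with the maps $\chi$ to $\zz$, because every generator is sent to a generator. Therefore $r_*$ maps $\bbg\Lambda$ into $\bbg{\Lambda'}$, and this restriction is onto, since the inclusion $\raag{\Lambda'}\hookrightarrow\raag\Lambda$ sends $\bbg{\Lambda'}$ into $\bbg\Lambda$ and $r_*$ is the identity on it.

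Finally, I would identify the targets. For a clique on $k+1$ vertices, $\raag K=\zz^{k+1}$, and $\bbg K$ is the kernel of the sum map, which is $\zz^k$. For a tree with $k+1$ vertices, hence $k$ edges, Proposition~\ref{free} gives $\bbg T\cong\ff_k$. Composing the surjections $\bbgm{\mathfrak M}\to\bbg{\shadow{\mathfrak M}}\to\bbg{K}$ or $\bbg{T}$ proves both items. The main obstacle is only the bookkeeping in the disconnected case; if necessary, one can interpret the hypothesis as saying that the retraction lands in a single component and proceed as above.
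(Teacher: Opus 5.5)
Your proof is correct and is essentially the argument the paper has in mind; the paper gives no separate proof and presents the corollary as an immediate consequence of Proposition~\ref{prop:AOMG=RAAG}. The argument is to compose $\phi$ with the surjection onto $\bbg{K}\cong\zz^k$ (resp.\ $\bbg{T}\cong\ff_k$, by Proposition~\ref{free}) induced by the retraction. Your disconnected-case bookkeeping can also be skipped: every edge $e$ of the connected subgraph $K$ (or $T$) is an element of $E$ with $r_*\phi(e)=[t_e][h_e]^{-1}$, so the image of $r_*\circ\phi$ already contains a generating set of $\bbg K$ (resp.\ $\bbg T$), without passing to the wedge or to a single component.
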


\begin{remark}[Compatibility]
One could have developed the theory for matroids that are orderable and oriented but not necessarily orderly.
Indeed, if $\omega$ is not a compatible ordering of such a matroid $\mathfrak M=(E,\mathcal{C},\omega)$, then there are $e,f \in E$ such that the identifications in \eqref{eq:orderly graph relators} associated to two different ordered signed circuits force one of $e$, $f$, $ef$, or $ef^{-1}$ to lie in $\ker \phi$.
Consequently, the map $\phi\colon\bbgm{\mathfrak M}\to \raag{\shadow{\mathfrak M}}$ can fail to be injective. 
This is a good reason to restrict our attention to orderly matroids.
In the next section, we prove injectivity of $\phi$ for graphic matroids.
\end{remark}

\subsubsection{Graphic matroids}
If $\Gamma$ is a graph and $\mathfrak M_\Gamma$ is its orderly cycle matroid, then we know that $\bbgm{\mathfrak M_\Gamma}\cong \bbg \Gamma$ by Theorem~\ref{mainthm matroid intro}.
In this section, we prove Theorem~\ref{thm:faithful} that if $\mathfrak M$ is graphic, then   $\bbgm{\mathfrak M}$  is  isomorphic to a BBG (the one associated with the wedge sum of the  components of $\shadow{\mathfrak M}$), and that
the set of defining graphs for a graphic matroid is in bijection with a certain set of orderings of the  matroid; see Proposition~\ref{prop:defining graphs and orderings}.

Let $\mathfrak M=(E,\mathcal C,\omega)$ be an orderly matroid.
Recall that there is a canonical orientation on $\ee{\shadow{\mathfrak M}}=E$; see Remark~\ref{rmk:TH orientation}. 
In particular, every ordered signed circuit of $\mathfrak M$ induces an oriented cycle in $\shadow{\mathfrak M}$, but in general the converse may not be true.
Indeed, suppose that $(e_1,\dots,e_n)$ is a cycle in $\shadow{ \mathfrak M}$. Then, for each $i=1,\dots,n$, since $e_i$ and $e_{i+1}$ are consecutive in $\shadow{ \mathfrak M}$, there must be a circuit $C_i$ of $\mathfrak M$ in which both $e_i$ and $e_{i+1}$ appear consecutively. However, there is a priori no reason why there should be a circuit $C$ of $\mathfrak M$ in which all $e_1,\dots,e_n$ appear. 
We think of this as a local-to-global problem, and therefore we give the following definition.

\begin{definition}\label{def:globally compatible}
    Let $\mathfrak M =(E,\mathcal C, \omega)$ be an orderly matroid. We say that $\omega$ is \textit{globally compatible} if
    every  oriented cycle in $\shadow{\mathfrak M}$ is induced by an ordered signed circuit of $\mathfrak M$.
\end{definition}

This definition isolates the key property of a graphic matroid that is relevant in  the proofs of \cite[Theorem 1]{DicksLeary99} and Theorem~\ref{thm:faithful} below.
The next result says that a globally compatible ordering is a graphic ordering. However, it is not clear whether every compatible ordering is necessarily a graphic ordering, even when the underlying matroid is graphic.

\begin{lemma}\label{lem:faithful}
Let $\mathfrak M=(E,\mathcal{C},\omega)$ be an orderly matroid.
Then $\mathfrak M$ is graphic if and only if $\omega$ is globally compatible.
In this case, we have $\mathfrak M \cong \mathfrak M_{\shadow{\mathfrak M}}$ and $\omega=\omega_{\shadow{\mathfrak M}}$.
\end{lemma}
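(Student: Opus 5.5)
I read ``$\mathfrak M$ is graphic'' in the orderly sense: $\mathfrak M$ is isomorphic, as an orderly matroid, to some $\mathfrak M_\Lambda$ with its graphic ordering. The remark just before the statement suggests this is intended, since a bare compatible ordering on a graphic matroid is not known to be graphic. I prove the two implications separately. The identification $\mathfrak M\cong\mathfrak M_{\shadow{\mathfrak M}}$, $\omega=\omega_{\shadow{\mathfrak M}}$ comes out of the ``if'' direction.

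\emph{Graphic $\Rightarrow$ globally compatible.} Suppose $\mathfrak M\cong\mathfrak M_\Lambda$ for a digraph $\Lambda$. The equivalence relation~\eqref{eq:orderly graph relators} only identifies heads and tails of consecutive edges of cycles. By the argument of Lemma~\ref{lem:orderly graph of a graph}, applied block by block, $\shadow{\mathfrak M}$ is the disjoint union of the biconnected components of $\Lambda$, with loops and isolated bridges split off. Two points need care here. First, the tail-to-head orientation of Remark~\ref{rmk:TH orientation} agrees with the orientation of $\Lambda$; this holds because the four cases of~\eqref{eq:orderly graph relators} match exactly how consecutive edges of an oriented cycle meet at a vertex. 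Second, every cycle of $\shadow{\mathfrak M}$ lies in one block, so it is a cycle of $\Lambda$. Its edge set is then a circuit of $M_\Lambda$, and the graphic ordering of that circuit is exactly the cyclic order of the cycle, with the correct signs. Hence every oriented cycle of $\shadow{\mathfrak M}$ is induced by an ordered signed circuit.

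\emph{Globally compatible $\Rightarrow$ graphic.} Orient $\shadow{\mathfrak M}$ tail to head. I then show two things.
\begin{enumerate}
\item[(a)] Every ordered signed circuit $\omega(C)$ traces a closed walk in $\shadow{\mathfrak M}$ with pairwise distinct edges, traversed according to the signs. This is immediate from~\eqref{eq:orderly graph relators}: the identification for the pair ``$e_{i+1}^{\epsilon_{i+1}}$ follows $e_i^{\epsilon_i}$'' glues the exit endpoint of $e_i$ to the entry endpoint of $e_{i+1}$. The case $|\underline C|=1$ gives a loop, as in Definition~\ref{def:orderly graph}.
\item[(b)] This closed trail is in fact a cycle. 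Any closed trail contains a cycle $Z$ with $\ee Z\subseteq\underline C$. By global compatibility, $Z$ is induced by some circuit $D$ with $\underline D=\ee Z\subseteq\underline C$. Axiom (C2) then gives $D=\pm C$, so $\ee Z=\underline C$.
\end{enumerate}
Consequently the circuits of $\mathcal M$ are exactly the edge sets of cycles of $\shadow{\mathfrak M}$: (a)--(b) give one inclusion and global compatibility gives the other. The signs agree with the tail-to-head orientation by (a). The cyclic order $\omega(C)$ is the order of traversal, which is the graphic order. Therefore the identity on $E$ is an isomorphism $\mathfrak M\cong\mathfrak M_{\shadow{\mathfrak M}}$ with $\omega=\omega_{\shadow{\mathfrak M}}$, and in particular $\mathfrak M$ is graphic.

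\emph{Main obstacle.} I expect step (b) to be the delicate point, along with the signed bookkeeping in (a). For (a), one must check case by case in~\eqref{eq:orderly graph relators} that ``follows'' really glues head-to-tail consistently with the signs. For (b), one must make sure a circuit cannot wrap around a vertex twice; I handle this by combining the minimality axiom (C2) with global compatibility, as described. In the ``only if'' direction, the only subtlety is non-biconnected $\Lambda$, which I handle by passing to blocks and noting that cycles never cross cut vertices.
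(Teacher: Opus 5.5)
Your proposal is correct and follows essentially the same route as the paper. The forward direction recovers the graph from $\shadow{\mathfrak M}$; the converse uses global compatibility and axiom (C2) to show that each ordered signed circuit traces a genuine oriented cycle, with global compatibility giving the reverse inclusion. Your block-by-block treatment of a non-biconnected $\Lambda$ and your check that the tail-to-head orientation matches that of $\Lambda$ fill in details the paper skips when it simply cites Lemma~\ref{lem:orderly graph of a graph}, which is stated only for biconnected graphs.
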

\begin{proof}
Suppose that $\mathfrak M$ is graphic. Fix a defining graph $\Gamma$ such that $\mathfrak M\cong \mathfrak M_\Gamma$ as an orderly matroid. In particular, this means that $\omega=\omega_\Gamma$. 
By definition, the ordered signed circuits of $\mathfrak M$ correspond exactly to the cycles of $\Gamma$. Moreover, we have $\Gamma\cong \shadow{\mathfrak M}$ by Lemma~\ref{lem:orderly graph of a graph}.

Conversely, suppose that $\omega$ is globally compatible.
We claim that $\mathfrak M$ is isomorphic to the cycle matroid of the orderly graph $\shadow{\mathfrak M}$.
Define a map $\xi\colon E\to\ee{\shadow{\mathfrak M}}$ by $\xi(e)=([t_e],[h_e])$; this is a bijection by construction. It suffices to show that $\xi$ maps ordered signed circuits of $\mathfrak M$ to ordered signed circuits of $\mathfrak M_{\shadow{\mathfrak M}}$.

Let $\omega(C)$ be an ordered signed circuit of $\mathfrak{M}$. Then $\xi$ sends $\omega(C)$ to a closed oriented edge-path $\widetilde{C}$ in $\shadow{\mathfrak M}$. If $\widetilde{C}$ were not an oriented cycle, then it would contain a proper oriented cycle $\widetilde{D}$. Since $\omega$ is globally compatible, the oriented cycle $\widetilde{D}$ is induced by an ordered signed circuit $\omega(D)$ of $\mathfrak{M}$.
Since $\xi$ is a bijection, we have $\underline{D}\subset\underline{C}$, which implies that $D=C$ or $D=-C$ by the definition of oriented matroid. In either case, we have $\underline{D}=\underline{C}$,and therefore $\widetilde{D}=\widetilde{C}$, which is a contradiction. Thus, the closed oriented edge-path $\widetilde{C}$ is an oriented cycle of $\shadow{\mathfrak M}$.

On the other hand, let $\widetilde{C}$ be an oriented cycle of $\shadow{\mathfrak M}$. Since $\omega$ is globally compatible, the oriented cycle $\widetilde{C}$ is induced by an ordered signed circuit $\omega(C)$ of $\mathfrak{M}$. Thus, the map $\xi$ sends $\widetilde{C}$ to $\omega(C)$. This proves $\mathfrak M \cong \mathfrak M_{\shadow{\mathfrak M}}$. Thus, the orderly matroid $\mathfrak M$ is graphic, and $\omega=\omega_{\shadow{\mathfrak M}}$. 
\end{proof}

We are ready to prove Theorem~\ref{thm:faithful}.

\begin{proof}[Proof of Theorem~\ref{thm:faithful}]
We first consider the case where $\mathfrak M$ is connected.
    Recall from Proposition~\ref{prop:AOMG=RAAG} that there is an epimorphism $\phi\colon \bbgm{\mathfrak M}\to \bbg{\shadow{\mathfrak M}}$ given by $\phi(e)=[t_e][h_e]^{-1}$ for $e\in E$.
    It remains to prove that $\phi$ is injective.

    Since $\mathfrak M$ is connected, the graph $\shadow{\mathfrak M}$ is connected by Lemma~\ref{lem:connected}.
    Let $v,w\in\vv{\shadow{\mathfrak M}}$. By Remark~\ref{rmk:TH orientation}, any edge-path from $v$ to $w$ gives a well-defined element of $\bbgm{\mathfrak M}$.    
     Moreover, if there are two oriented edge-paths from $v$ to $w$, then the concatenation of one with the reverse of the other gives an oriented closed edge-path $Q$ in $\shadow{\mathfrak M}$ that decomposes into oriented cycles. 
     Since $\mathfrak M$ is graphic, we know that $\omega$ is globally compatible by Lemma~\ref{lem:faithful}. Thus,  each of these oriented cycles is induced by an ordered signed circuit of $\mathfrak M$ and therefore corresponds to a relator in $\bbgm{\mathfrak M}$. It follows that the element of $\bbgm{\mathfrak M}$ representing $Q$ is trivial.
    Hence, the two edge-paths from $v$ to $w$ represent the same element of $\bbgm{\mathfrak M}$. We denote this element by $p(v,w)$.

    Now, fix $v\in \vv{\shadow{\mathfrak M}}$, and consider the map $\psi_v\colon E\to \bbgm{\mathfrak M}$ defined by 
    $$\psi_v(e)=p(v,[t_e])ep([t_e],v).$$
The computation in the proof of \cite[Theorem 1]{DicksLeary99} shows that  $\psi_v$ extends to an automorphism $\psi_v\colon \bbgm{\mathfrak M}\to \bbgm{\mathfrak M}$.
    Let $\zz =\langle s\rangle$, and consider $\bbgm{\mathfrak M} \rtimes_{\psi_v} \zz$. Then the map $\phi\colon \bbgm{\mathfrak M}\to \bbg{\shadow{\mathfrak M}}$ extends to an epimorphism 
    $$\widetilde \phi:\bbgm{\mathfrak M} \rtimes_{\psi_v} \zz \to \raag{\shadow{\mathfrak M}}$$
    by setting $\widetilde \phi(s)=v$.
Once again, as in the proof of \cite[Theorem 1]{DicksLeary99}, one verifies that the map $\theta\colon\raag{\shadow{\mathfrak M}}\to \bbgm{\mathfrak M}\rtimes_{\psi_v}\zz$
  defined  on generators by $\theta (w)=p(w,v)s$ is an inverse of $\widetilde \phi$.
    In particular, the map $\widetilde \phi$ is injective, and hence its restriction $\phi$ is also injective. This completes the proof of the connected case.

Next, we consider the case where $\mathfrak M$ is disconnected. Let $\mathfrak M_1,\dots,\mathfrak M_n$ be the connected components of $\mathfrak M$.
    Since each $\mathfrak M_i$ is graphic and connected, the map $\phi_i\colon\bbgm{\mathfrak M_i} \to \bbg{\shadow{\mathfrak M_i}}$ is an isomorphism.
Since every ordered signed circuit of $\mathfrak M$ is an ordered signed circuit of $\mathfrak M_i$ for some $i$, it follows from the presentation~\eqref{eq:OMG} that $\bbgm{\mathfrak M}\cong\Asterisk_{i=1}^n \bbgm{\mathfrak{M}_i}\cong\Asterisk_{i=1}^n\bbg{\shadow{\mathfrak{M}_i}}$. 
Finally, identifying one  vertex from each $\shadow{\mathfrak M_i}$ gives $\bigvee_{i=1}^n \shadow{\mathfrak M_i}$, and it follows from the Dicks--Leary presentation~\eqref{eq:infdicksleary} that $\bbg{\bigvee_{i=1}^n \shadow{\mathfrak M_i}}\cong\Asterisk_{i=1}^n\bbg{\shadow{\mathfrak{M}_i}}$. This completes the proof.
\end{proof}

We end this section with a proposition about the role of the ordering for a graphic matroid.
Let $M$ be a connected graphic matroid and consider the following sets:
\begin{itemize}
    \item $\Gamma(M)=\{\Gamma \mid \Gamma$  is a  biconnected graph whose cycle matroid is $M\}$.
    \item $\Omega_{gc}(M)=\{\omega \mid \omega$ is a globally compatible ordering on $M\}.$ 
\end{itemize}
Here, we say that $\omega$ is a globally compatible of $M$ if it is globally compatible with respect to some orientation of $M$. Note that by Lemma~\ref{lem:reorientation}, this depends only on the reorientation class, and graphic matroids  have a unique reorientation class by \cite[Proposition 6.2]{BLV78}.

\begin{proposition}\label{prop:defining graphs and orderings}
Let $M$ be a connected graphic matroid. There is a bijection $\alpha\colon\Gamma(M) \to \Omega_{gc}(M)$ defined by $\alpha(\Gamma)=\omega_\Gamma$.
\end{proposition}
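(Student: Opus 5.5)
We have to check three things: that $\alpha$ is well defined, that it is injective, and that it is surjective. Throughout, graphs in $\Gamma(M)$ are considered up to isomorphism fixing the ground set $E=\ee\Gamma$, so a defining graph is a biconnected graph $\Gamma$ together with the identification $\ee\Gamma=E$ that realizes $M_\Gamma=M$.

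\textbf{Well-definedness.} Let $\Gamma\in\Gamma(M)$ and fix an orientation of $\Gamma$. The graphic ordering $\omega_\Gamma$ is compatible, so $\mathfrak M_\Gamma=(E,\mathcal C(\Gamma),\omega_\Gamma)$ is an orderly matroid whose underlying matroid is $M$. Since $\mathfrak M_\Gamma$ is graphic, Lemma~\ref{lem:faithful} shows that $\omega_\Gamma$ is globally compatible. Changing the orientation of $\Gamma$ only reorients the matroid, and there is a unique reorientation class by \cite[Proposition 6.2]{BLV78}. Hence $\alpha(\Gamma)=\omega_\Gamma\in\Omega_{gc}(M)$ is well defined.

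\textbf{Injectivity.} Suppose $\omega_{\Gamma_1}=\omega_{\Gamma_2}$, after reorienting so that both live on the same oriented matroid. Then the orderly matroids $\mathfrak M_{\Gamma_1}$ and $\mathfrak M_{\Gamma_2}$ coincide, so their orderly graphs coincide, since Definition~\ref{def:orderly graph} depends only on $(E,\mathcal C,\omega)$. Both graphs are biconnected, so Lemma~\ref{lem:orderly graph of a graph} gives
\[
\Gamma_1\cong\shadow{\mathfrak M_{\Gamma_1}}=\shadow{\mathfrak M_{\Gamma_2}}\cong\Gamma_2 .
\]
These isomorphisms are the identity on $E$, so $\Gamma_1=\Gamma_2$ in $\Gamma(M)$.

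\textbf{Surjectivity.} Let $\omega\in\Omega_{gc}(M)$, globally compatible with respect to some orientation, and set $\mathfrak M=(E,\mathcal C,\omega)$. By Lemma~\ref{lem:faithful}, $\mathfrak M\cong\mathfrak M_{\shadow{\mathfrak M}}$ via the identity on $E$, and $\omega=\omega_{\shadow{\mathfrak M}}$. In particular the cycle matroid of $\Gamma:=\shadow{\mathfrak M}$ is $M$, and $\alpha(\Gamma)=\omega$ once we check that $\Gamma\in\Gamma(M)$.

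The main obstacle is showing that $\shadow{\mathfrak M}$ is biconnected.

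1. $\Gamma$ is connected by Lemma~\ref{lem:connected}, because $M$ is connected.
2. $\Gamma$ has no isolated vertices, since every vertex of $\Gamma$ is a class $[h_e]$ or $[t_e]$ and so is an endpoint of an edge.
3. A cut vertex would split $\ee\Gamma$ into two nonempty blocks with no cycle meeting both. Then $M_\Gamma=M$ would be a direct sum, contradicting connectivity of $M$. This is the standard fact that a loopless graph without isolated vertices and with at least two edges is $2$-connected exactly when its cycle matroid is connected.

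The degenerate cases, $|E|\le1$ or $M$ a single loop, are handled by hand or excluded by the convention on biconnected graphs.
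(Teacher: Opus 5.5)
Your proof is correct and follows the paper's argument. Your injectivity step is the paper's $\eta\circ\alpha=\mathrm{id}$ via Lemma~\ref{lem:orderly graph of a graph}, and your surjectivity step is the paper's $\alpha\circ\eta=\mathrm{id}$ via Lemma~\ref{lem:faithful}. Two things you add are points the paper leaves implicit when it asserts that $\eta$ lands in $\Gamma(M)$: your check that $\shadow{\mathfrak M}$ is biconnected (Lemma~\ref{lem:connected} plus the direct-sum argument), and your convention that defining graphs are taken up to isomorphism fixing $E$.
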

\begin{proof}
    The inverse is the map $\eta\colon\Omega_{gc}(M)\to \Gamma(M)$ that associates a globally compatible ordering to the orderly graph $\shadow{\mathfrak M}$ of the orderly matroid $\mathfrak M$ obtained by equipping $M$ with $\omega$ and an arbitrary orientation.
    By Lemma~\ref{lem:orderly graph of a graph}, the composition $\eta\circ\alpha$ is the identity on $\Gamma(M)$.   
    Conversely, 
    if $\omega$ is a globally compatible ordering on $M$ satisfying Definition~\ref{def:globally compatible}, then it follows from Lemma~\ref{lem:faithful} that there is an isomorphism between $\mathfrak M$ and the orderly cycle matroid of $\shadow{ \mathfrak M}$ and $\omega=\omega_{\shadow{ \mathfrak M}}$. This implies that $\alpha \circ \eta$ is the identity on $\Omega_{gc}(M)$.
\end{proof}

\section{Tree clique-spanners}\label{sec:tcs}
In this section, we introduce a certain class of spanning trees.
After studying some examples and basic properties, we review their connections with dually chordal graphs in \S\ref{sec:dually chordal} and with locally connected spanning trees in \S\ref{sec:loc con}.

Let $\Gamma$ be a simplicial graph.
An \textit{$n$-clique} of $\Gamma$ is a complete subgraph $K_n$ on $n$ vertices; it corresponds to a simplex of dimension $n-1$ in $\flag \Gamma$. 
Given a spanning tree $T$ of $\Gamma$ and two vertices $u,v\in \vv \Gamma$, we denote by $[u,v]_T$ the unique edge-path in $T$ from $u$ to $v$, and by $ \dist Tuv$ and $\dist \Gamma uv$ the distances between $u$ and $v$ in $T$ and in $\Gamma$, respectively.

\begin{definition}[Tree clique-spanner]\label{def:TCS}
A spanning tree $T$ of $\Gamma$ is called a \textit{tree clique-spanner} if for every edge $\{u,v\}\in \ee \Gamma$, there exists a clique $K$ of $\Gamma$ such that $[u,v]_T\subseteq K$.
There is a unique minimal such clique, which we denote by $\suppclique Tuv$  and call the \textit{supporting clique} of $\{u,v\}$ with respect to $T$.
\end{definition}

It is known that $\Gamma$ admits a tree clique-spanner if and only if it is \textit{dually chordal}~\cite[Theorem 2.1]{SB94}; see the definition in the Introduction and the discussion in \S\ref{sec:dually chordal} below.
We now give some examples that are more meaningful for our main application in Theorem~\ref{mainthm:raag recognition}.

\begin{example}[Trees, cliques, and cones]\label{ex:TCS of tree & clique}
    A tree is a tree clique-spanner of itself. Any spanning tree of a clique is a tree clique-spanner of that clique. A cone graph admits a tree clique-spanner given by the spoke at a cone vertex. 
\end{example}

\begin{example}[Graphs without tree clique-spanners]\label{ex:no tcs}
A cycle $C_n$  admits a tree clique-spanner if and only if $n=3$. 
The trefoil graph in Figure~\ref{fig:trefoil with T3S} does not admit tree clique-spanners.
\end{example}

\begin{example}[Tree $k$-spanners]\label{ex:kspanners}
A spanning tree $T$ of $\Gamma$ is called a \textit{tree $k$-spanner} if every pair of vertices $u,v\in \vv \Gamma$ satisfies $ \dist Tuv \leq k \dist \Gamma uv.$
It follows from~\cite[Lemma 3.1]{CR26} that a tree $2$-spanner is a tree clique-spanner.
Conversely, any tree clique-spanner is a tree $k$-spanner, where $k+1$ is the number of vertices in the largest clique in $\Gamma$.
On the other hand, see the picture on the right in Figure~\ref{fig:TCS} for a tree clique-spanner that is not a tree $2$-spanner, and see Figure~\ref{fig:trefoil with T3S} for a tree $3$-spanner that is not a tree clique-spanner.
\end{example}

\begin{figure}[h]
    \centering
    \begin{tikzpicture}[scale=0.6]
\draw [thick] (-1,0)--(1,0);
\draw [thick] (0,2)--(1,0);
\draw [thick] (0,2)--(-1,0);
\draw [thick] (-1,0)--(0,-2);
\draw [thick] (-1,0)--(1,0);
\draw [thick] (0,-2)--(1,0);
\draw [thick] (1,0)--(2,-2);
\draw [thick] (2,-2)--(0,-2);
\draw [thick] (1,0)--(0,-2);
\draw [thick] (-1,0)--(-2,-2);
\draw [thick] (-2,-2)--(0,-2);
\draw [thick] (-1,0)--(0,-2);

\draw [thick, red] (0,2)--(1,0);
\draw [thick, red] (2,-2)--(0,-2);
\draw [thick, red] (1,0)--(0,-2);
\draw [thick, red] (-2,-2)--(0,-2);
\draw [thick, red] (-1,0)--(0,-2);

\draw [fill] (0,2) circle [radius=0.15];
\draw [fill] (-1,0) circle [radius=0.15];
\draw [fill] (1,0) circle [radius=0.15];
\draw [fill] (-2,-2) circle [radius=0.15];
\draw [fill] (0,-2) circle [radius=0.15];
\draw [fill] (2,-2) circle [radius=0.15];
\end{tikzpicture}
    \caption{The trefoil graph with a tree $3$-spanner colored in red.}
    \label{fig:trefoil with T3S}
\end{figure}

Our main motivation for defining tree clique-spanners is that they generalize tree 2-spanners, which we studied in \cite{CR26} in relation to the RAAG recognition problem for BBGs.
We now proceed to generalize the results in \cite[\S 3.1]{CR26} from the context of tree $2$-spanners to that of tree clique-spanners.

Let $v_1,\dots, v_n \in \vv\Gamma$. We denote by $[v_1,\dots,v_n]_T$ the \textit{convex hull} of $v_1,\dots, v_n $ in $T$, that is, the union of the paths in $T$ between every pair of these vertices.
In particular, the convex hull $[v_i,v_j]_T$ is the unique path in $T$ between $v_i$ and $v_j$.

\begin{lemma}\label{lem:0simplexinclique}
    Let $\Gamma$ be a  simplicial graph.
    Let $T$ be a tree clique-spanner of $\Gamma$, and
    let $K$ be a clique of $\Gamma$ with vertices $\{v_1,\dots,v_n\}$.
    Then the subgraph induced by the convex hull  $[v_1,\dots,v_n]_T$ is a clique.
    In particular, this clique contains $K$.
\end{lemma}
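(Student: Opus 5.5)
The proof is by induction on $n$. The case $n=1$ is trivial, and the case $n=2$ is exactly Definition~\ref{def:TCS}: since $\{v_1,v_2\}\in\ee\Gamma$, the path $[v_1,v_2]_T$ lies in the clique $\suppclique T{v_1}{v_2}$, so its vertices are pairwise adjacent. The basic tool throughout is the following consequence of the same observation. For any two vertices $v_i,v_j$ of $K$, all vertices of $[v_i,v_j]_T$ are pairwise adjacent (or equal) in $\Gamma$. Hence, to show that two vertices $x,y$ of the convex hull are adjacent, it suffices to find indices $i,j$ with $x,y\in[v_i,v_j]_T$.

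For the inductive step, write $H=[v_1,\dots,v_{n-1}]_T$. By induction, $H$ induces a clique. Let $p$ be the vertex of $H$ closest to $v_n$ in $T$. I plan to use two standard facts about trees.

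First, $[v_1,\dots,v_n]_T=H\cup[v_n,p]_T$.

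Second, for every vertex $w\in H$, the geodesic $[v_n,w]_T$ passes through $p$, so $[v_n,p]_T\subseteq[v_n,v_i]_T$ for every $i<n$.

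I now check pairs of vertices $x,y$ of the hull. If $x,y\in H$, they are adjacent by induction. If $x,y\in[v_n,p]_T$, they both lie on $[v_n,v_1]_T$ and are adjacent by the basic tool.

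The remaining case is $x\in H$ and $y\in[v_n,p]_T$. Since $H$ is the union of the paths $[v_i,v_j]_T$ with $i,j<n$, the vertex $x$ lies on some such path. Let $m$ be the median in $T$ of $v_i,v_j,v_n$. Then $[v_i,v_j]_T=[v_i,m]_T\cup[m,v_j]_T$, and these two pieces are contained in $[v_i,v_n]_T$ and $[v_j,v_n]_T$ respectively. So, say, $x\in[v_i,v_n]_T$. By the second fact, $y\in[v_n,p]_T\subseteq[v_n,v_i]_T$. Therefore $x$ and $y$ lie on the common path $[v_i,v_n]_T$ and are adjacent by the basic tool.

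This shows that the convex hull induces a clique. It contains $K$ because it contains each $v_i$.

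The only real obstacle is the tree-geometric bookkeeping: the decomposition of the hull via the projection $p$, and the median argument placing $x$ on a path with endpoint $v_n$. Both are elementary properties of geodesics in trees, so I expect the argument to go through without difficulty.
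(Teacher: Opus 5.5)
Your proof is correct. Its core is the same as the paper's: two vertices on a common path $[v_i,v_j]_T$ are adjacent, because that path lies in the clique $\suppclique T{v_i}{v_j}$. What differs is how you show that any two hull vertices end up in that situation.

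\textbf{The paper's route.} It argues directly, without induction. The convex hull is a finite subtree whose leaves are all among $v_1,\dots,v_n$. For two hull vertices $x,y$, the geodesic $[x,y]_T$ lies in the hull by convexity. It can therefore be extended at both ends to a leaf-to-leaf geodesic, that is, to some $[v_i,v_j]_T$. A single application of the definition of tree clique-spanner then finishes the proof.

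\textbf{Your route.} You induct on $n$, project $v_n$ onto $[v_1,\dots,v_{n-1}]_T$, and use the median argument for the mixed case. This replaces the paper's one-line extension-to-leaves step with explicit tree bookkeeping.

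\textbf{Comparison.} The paper's argument is shorter and needs no case analysis, but it leaves the extension-to-leaves fact implicit. Yours is longer, but every tree-geometric fact it uses is spelled out. If you strengthened your inductive hypothesis to ``any two vertices of the hull lie on a common $[v_i,v_j]_T$'', each of your three cases would still go through. Your argument would then prove exactly the paper's intermediate claim.
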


\begin{proof}
Let $p$ and $q$ be two vertices of $[v_1,\dots,v_n]_T$.
Since $[v_1,\dots,v_n]_T$ is convex in $T$, the  geodesic $[p,q]_T$ is contained in $[v_1,\dots,v_n]_T$.
Moreover, since $[v_1,\dots,v_n]_T$ is the convex hull of the vertices of $K$ in $T$, every leaf of  $[v_1,\dots,v_n]_T$ is a vertex of $K$.
Thus, the geodesic $[p,q]_T$ is contained in $[v_i,v_j]_T$ for some $1\leq i,j\leq n$.
Since $v_i$ and $v_j$ are adjacent in $\Gamma$ and $T$ is a tree clique-spanner of $\Gamma$, the path $[v_i,v_j]_T$ is contained in a clique of $\Gamma$. Therefore, the vertices $p$ and $q$ are adjacent in $\Gamma$.
\end{proof}

\begin{definition}[Supporting clique]\label{def:supp_clique}
Given a clique $K$ with vertices $\{v_1,\dots,v_n\}$ and a tree clique-spanner $T$ of $\Gamma$, we denote by $\suppcliqueclique TK$ the clique in $\Gamma$ spanned by $[v_1,\dots,v_n]_T$; see Lemma~\ref{lem:0simplexinclique}.
We call it the \textit{supporting clique} of $K$, extending the terminology from Definition~\ref{def:TCS}.
Note that this is the unique minimal clique of $\Gamma$ containing $[v_1,\dots,v_n]_T$.
\end{definition}

For example, let $\Lambda$ be the trefoil graph in Figure~\ref{fig:trefoil with T3S}, and let $\Gamma=\{v\} \ast \Lambda$ be the cone over the trefoil graph. If $K$ is the central triangle of $\Lambda$ and $T$ is the spoke at $v$, then $\suppcliqueclique TK=\{v\} \ast K$.

While it is known that  the graphs that admit tree clique-spanners are exactly the dually chordal graphs, we would like conditions that are more elementary to check than being dually chordal.
The following result generalizes \cite[Lemma 3.4]{CR26} and provides a necessary topological condition, rather than a combinatorial one, for a graph to admit a tree clique-spanner.

\begin{lemma}\label{lem: simply connected}
Let $\Gamma$ be a  simplicial graph.
If $\Gamma$ admits a tree clique-spanner, then the flag complex $\flag\Gamma$ is simply connected.
\end{lemma}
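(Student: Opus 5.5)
Since $T$ is a spanning tree, its geometric realization is a contractible subcomplex of the $1$-skeleton of $\flag\Gamma$ that contains every vertex. Therefore the edges of $\Gamma$ not in $T$ generate $\pi_1(\flag\Gamma)$. More precisely, fix a base vertex $v_0$. For each edge $\{u,v\}\in \ee\Gamma\setminus\ee T$, consider the loop $\gamma_{uv}$ that goes from $v_0$ to $u$ along $T$, crosses the edge $\{u,v\}$, and returns from $v$ to $v_0$ along $T$. Every edge-loop based at $v_0$ is homotopic in $\flag\Gamma$ to a product of these loops and their inverses, and every loop in $\flag\Gamma$ is homotopic to an edge-loop by cellular approximation. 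So it suffices to show that each $\gamma_{uv}$ is null-homotopic in $\flag\Gamma$.

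Up to conjugation by the tree path from $v_0$ to $u$, the loop $\gamma_{uv}$ is the closed edge-path $[u,v]_T$ followed by the edge $\{v,u\}$. I will show that this closed path lies in a simplex of $\flag\Gamma$. By Definition~\ref{def:TCS}, there is a clique $K$ of $\Gamma$ with $[u,v]_T\subseteq K$. The vertices $u,v$ lie in $[u,v]_T$, so the edge $\{u,v\}$ also lies in $K$ because $K$ is complete. Since $\Gamma$ is simplicial and $\flag\Gamma$ is its flag complex, the clique $K$ spans a simplex $\sigma_K$ of $\flag\Gamma$. The whole closed edge-path lies in $\sigma_K$, which is contractible, so it is null-homotopic in $\sigma_K$ and hence in $\flag\Gamma$. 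Therefore $\gamma_{uv}$ is null-homotopic.

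It follows that $\pi_1(\flag\Gamma,v_0)$ is trivial. Connectedness is automatic, since $T$ spans $\Gamma$ and so $\Gamma$ is connected. Hence $\flag\Gamma$ is simply connected.

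There is no serious obstacle. The only step requiring care is the standard fact that, for a connected graph with a spanning tree, the loops through non-tree edges generate the fundamental group of the $1$-skeleton, and that $\pi_1$ of the $2$-skeleton, and hence of $\flag\Gamma$, is a quotient of it. This is the same argument as in \cite[Lemma 3.4]{CR26}, with triangles replaced by arbitrary cliques.
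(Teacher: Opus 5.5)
Your proof is correct and is essentially the paper's argument. In both, the key point is that for each edge $\{u,v\}$ the closed path formed by the edge and $[u,v]_T$ lies in the simplex spanned by a clique of $\Gamma$, so the edge is homotopic rel endpoints to its tree path. The paper applies this edge-by-edge to push an arbitrary cycle into the contractible tree $T$, while you apply it to the fundamental cycles of $T$, which generate $\pi_1$; this is only a difference in bookkeeping.
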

\begin{proof}
Let $C=(e_1,\dots,e_n)$ be a cycle of $\Gamma$, where $e_i=\{v_i,v_{i+1}\}$ with indices taken mod $n$ are the edges of $C$, and
let $T$ be a tree clique-spanner of $\Gamma$.
For each edge $e_i=\{v_i,v_{i+1}\}$, let $K_i = \suppclique{T}{v_i}{v_{i+1}}$ and $\gamma_i=[v_i,v_{i+1}]_T$.
By definition, we have $\gamma_i\subseteq K_i$.
Let $L$ be the closed edge-path obtained by concatenating the paths $\gamma_1,\dots,\gamma_n$.
Since $K_i$ is a clique, we have that $\pi_1(\flag{K_i})=1$. Thus, the edge $e_i$ and the path $\gamma_i$ are homotopic in $\flag{K_i}$ relative to their endpoints.
Therefore, the closed edge-path $L$ is homotopic to $C$ in $\flag \Gamma$.
Since $L$ is contained in the tree clique-spanner $T$ by construction, it is null-homotopic. Thus,  $C$ is also null-homotopic. Hence, the flag complex $\flag\Gamma$ is simply connected.
\end{proof}

\begin{remark}\label{rmk:contractible}
    The proof of Lemma~\ref{lem: simply connected} also proves that if $\Gamma$ has a spanning tree $T$ such that, for every edge $\{u,v\}\in \ee \Gamma$, the  path $[u,v]_T$ is contained in a simply connected full subcomplex of $\flag \Gamma$, then $\flag \Gamma$ is simply connected. 
    When $T$ is a tree clique-spanner of $\Gamma$, we actually obtain, via group-theoretic techniques, that $\flag \Gamma$ must be contractible; see   Corollary~\ref{cor:contractible}.
\end{remark}

\subsection{Dually chordal graphs}\label{sec:dually chordal}
Graphs admitting tree clique-spanners have already been studied in the literature with different names, such as \textit{expanded trees}~\cite{SB94}, \textit{tree-clique graphs}~\cite{GO96}, and \textit{dually chordal graphs}~\cite{BDCV98,duallychordal2012}. For the reader's convenience, we collect various characterizations in Theorem~\ref{thm:dually chordal}.

Let $\Gamma$ be a simplicial graph, and let $u,v \in  \vv \Gamma$. An $(u,v)$-\emph{separator} is a subset $S\subseteq V(\Gamma)$ such that removing $S$ disconnects $u$ and $v$. 
A \emph{minimal $(u,v)$-separator} is a $(u,v)$-separator whose proper subsets are not $(u,v)$-separators. 
A \textit{minimal separator} is a minimal $(u,v)$-separator for some $u,v \in  \vv \Gamma$. 
We denote by $\induced \Gamma S$ the full subgraph of $\Gamma$ induced by $S$.
We say that a minimal separator $S$ is \textit{connected} if $\induced \Gamma S$ is a connected graph.

\begin{theorem}\label{thm:dually chordal}
Let $\Gamma$ be a simplicial graph, and let $T$ be a spanning tree of $\Gamma$. 
The following statements are equivalent.
\begin{enumerate}
    \item \label{item:dually chordal tcs} $T$ is a tree clique-spanner of $\Gamma$.
    \item \label{item:dually chordal max clique} For every maximal clique $K$ of $\Gamma$, the intersection $ T\cap K$ is a tree (equivalently, it is connected).
    \item \label{item:dually chordal minsep} For every minimal separator $S$ of $\Gamma$, the intersection $T\cap \induced \Gamma S$ 
    is a  tree (equivalently, it is connected).
\end{enumerate}
Moreover, a graph $\Gamma$ admits a spanning tree with one of the above properties if and only if it is a dually chordal graph.
\end{theorem}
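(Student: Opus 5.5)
The plan is to prove the cycle of implications \eqref{item:dually chordal tcs}$\Rightarrow$\eqref{item:dually chordal max clique}$\Rightarrow$\eqref{item:dually chordal tcs}, and then \eqref{item:dually chordal max clique}$\Leftrightarrow$\eqref{item:dually chordal minsep}. The final sentence will be deduced from \cite[Theorem 2.1]{SB94}, which says that a graph admits a tree clique-spanner if and only if it is dually chordal. Throughout, I will use that a subgraph of a tree is a tree if and only if it is connected, which accounts for the parenthetical equivalences.

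\emph{\eqref{item:dually chordal tcs}$\Rightarrow$\eqref{item:dually chordal max clique}.}
Let $K$ be a maximal clique with vertices $v_1,\dots,v_n$. By Lemma~\ref{lem:0simplexinclique}, the convex hull $[v_1,\dots,v_n]_T$ spans a clique containing $K$. By maximality, this clique is exactly $K$. Hence every path $[v_i,v_j]_T$ lies in $K$, so $T\cap K$ is connected.

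\emph{\eqref{item:dually chordal max clique}$\Rightarrow$\eqref{item:dually chordal tcs}.}
Given an edge $\{u,v\}$, choose a maximal clique $K$ containing it. Since $T\cap K$ is a connected subtree of $T$ containing $u$ and $v$, the unique tree path $[u,v]_T$ lies in $T\cap K\subseteq K$.

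\emph{\eqref{item:dually chordal tcs}$\Rightarrow$\eqref{item:dually chordal minsep}.}
Let $S$ be a minimal $(a,b)$-separator, and let $A$ and $B$ be the components of $\Gamma\setminus S$ containing $a$ and $b$. By minimality, every $s\in S$ has neighbours in both $A$ and $B$. Take $s,s'\in S$; I want to show that $[s,s']_T\subseteq S$. First I would show that any vertex $x$ on a $T$-path between two vertices of $S$ lies in $S$. The key tool is the clique property: if $x\in A$ lies on $[s,s']_T$, use neighbours $b_1,b_2\in B$ of $s,s'$ together with the tree paths $[b_1,s]_T$ and $[s,b_2]_T$, and the fact that the cliques $\suppclique{T}{s}{b_1}$ avoid $A$ unless they contain a vertex adjacent to $B$. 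That alone does not finish the argument, so I expect to use instead a cleaner fact: $T$ restricted to $A\cup S$ is connected. Indeed, for $x\in A\cup S$ and its tree path to some vertex, each supporting clique of an edge within $A\cup S$ lies in $A\cup S\cup$(vertices adjacent to all of them). A clique meeting $A$ cannot meet $B$, so supporting cliques of edges inside $A\cup S$ stay in $A\cup S$. Hence the components of $T\cap(A\cup S)$ glue, and likewise for $B\cup S$. Then, for $s,s'\in S$, the path $[s,s']_T$ lies in both $T\cap(A\cup S)$ and $T\cap(B\cup S)$ by uniqueness of tree paths, hence in $S$. I expect this step to be the main obstacle: the connectivity of $T\cap(A\cup S)$ needs care, because edges of $\Gamma$ between $A\cup S$ vertices may be $S$–$S$ edges, whose supporting cliques could a priori dip into $B$. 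The fix I would try is to connect $s$ and $s'$ through $A$ using only $A$–$A$ and $A$–$S$ edges, whose supporting cliques contain an $A$-vertex and thus avoid $B$.

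\emph{\eqref{item:dually chordal minsep}$\Rightarrow$\eqref{item:dually chordal max clique}.}
I would argue by contradiction. Suppose $K$ is a maximal clique such that $T\cap K$ is disconnected, and take $u,v\in K$ in different components with $[u,v]_T$ leaving $K$ at a vertex $w\notin K$. By maximality, some $k\in K$ is not adjacent to $w$. Take a minimal $(w,k)$-separator $S$. Then $S\supseteq K\setminus\{k\}$, since the vertices of $K\setminus\{k\}$ adjacent to $w$ must be removed, and chasing this through $u$, $v$ along the tree path contradicts connectivity of $T\cap\induced\Gamma S$ in the way the tree path passes through $w$. This direction I would check against the literature \cite{BDCV98} if the combinatorics becomes messy.
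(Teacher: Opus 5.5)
Your (1)$\Leftrightarrow$(2) is correct. Your direct proof of (2)$\Rightarrow$(1) replaces the paper's citation of \cite[Lemma 3.3]{GO96}. Your (1)$\Rightarrow$(3) also works once you adopt the fix you propose. $A$ is connected, and every $s\in S$ has a neighbour in $A$. The supporting clique of an edge with an endpoint $a\in A$ consists of $a$ and neighbours of $a$, so it lies in $A\cup S$. (It does not merely avoid $B$; there may be other components of $\Gamma\setminus S$.) Hence $T$ restricted to $A\cup S$ is connected, likewise for $B\cup S$, and so $[s,s']_T\subseteq S$. This is a self-contained replacement for the paper's appeal to \cite[Theorem 2]{duallychordal2012}.

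The genuine gap is in (3)$\Rightarrow$(2). The claim that a minimal $(w,k)$-separator $S$ contains $K\setminus\{k\}$ is false. $S$ is only forced to contain the common neighbours of $w$ and $k$, i.e.\ the vertices of $K$ adjacent to $w$. If $w$ is not adjacent to $v$ (nothing prevents this when $[u,v]_T$ has length at least $3$), then $v$ can lie in the component of $k$. So you cannot conclude $u,v\in S$, and the ``chasing'' step has nothing to run on.

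The missing idea is a crossing argument:
\begin{enumerate}
\item Let $A$ and $B$ be the components of $\Gamma\setminus S$ containing $w$ and $k$.
\item Every vertex of $K$ is $k$ or adjacent to $k$, so $K\subseteq B\cup S$. In particular $u,v\notin A$, while $w\in A$ is an interior vertex of $[u,v]_T$.
\item Walk along $[u,v]_T$ from $w$ towards $u$ and towards $v$. The first vertices outside $A$ are two distinct vertices $s_1,s_2$ of $N(A)\subseteq S$.
\item Then $[s_1,s_2]_T$ contains $w\notin S$, contradicting (3).
\end{enumerate}
A similar crossing argument, applied to two non-adjacent vertices on $[x,y]_T$ for an edge $\{x,y\}$, even gives (3)$\Rightarrow$(1) directly.

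With this repair, your route proves the equivalence of (1)--(3) without the citations to \cite{GO96,duallychordal2012}. The final sentence rests on \cite[Theorem 2.1]{SB94} exactly as in the paper.
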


\begin{proof}
First, note that a tree satisfying \eqref{item:dually chordal max clique} is called a \textit{compatible tree} in \cite{GO96,duallychordal2012}.
Let $T$ be a tree clique-spanner of $\Gamma$, and let $K$ be a maximal $n$-clique whose vertices are $v_1,\dots,v_n$. 
By Lemma~\ref{lem:0simplexinclique},
the supporting clique $\suppcliqueclique{T}{K}$ contains the convex hull $[v_1,\dots,v_n]_T$.
Since $K$ is maximal, we have $K=\suppcliqueclique{T}{K}$. Hence, $[v_1,\dots,v_n]_T \subseteq K$.
In particular, intersection $T\cap K= [v_1,\dots,v_n]_T$ is a subtree of $T$ by definition.
This proves  \eqref{item:dually chordal tcs} $\Rightarrow$ \eqref{item:dually chordal max clique}. 
For the implication \eqref{item:dually chordal max clique} $\Rightarrow$ \eqref{item:dually chordal tcs} see \cite[Lemma 3.3]{GO96}.
The implication \eqref{item:dually chordal max clique} $\Rightarrow$ \eqref{item:dually chordal minsep} follows from \cite[Theorem 2]{duallychordal2012}, and \eqref{item:dually chordal minsep} $\Rightarrow$ \eqref{item:dually chordal max clique} follows from the proof of \cite[Theorem 5]{duallychordal2012}. 
    
Finally, by \cite[Theorem 2.1]{SB94}, a graph $\Gamma$ admits a tree clique-spanner if and only if it is a dually chordal graph. In \cite{SB94}, an \textit{expanded tree} is a graph admitting a tree clique-spanner, and a tree clique-spanner is called a \textit{canonical tree}. 
\end{proof}

Next, we use the characterizations in Theorem~\ref{thm:dually chordal} to obtain two easy-to-check conditions for the existence of certain spanning trees.
For instance, both conditions apply to the graph on the right-hand side of Figure~\ref{fig:TCS}.

\begin{lemma}\label{lem:disjoint_minsep_tcs}
    Let $\Gamma$ be a connected simplicial graph whose  minimal separators are connected and pairwise disjoint. 
    Then $\Gamma$ admits a tree clique-spanner.
\end{lemma}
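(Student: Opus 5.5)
We will construct the spanning tree directly and then check condition \eqref{item:dually chordal minsep} of Theorem~\ref{thm:dually chordal}. That condition says a spanning tree $T$ is a tree clique-spanner as soon as $T\cap \induced\Gamma S$ is connected for every minimal separator $S$.

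Let $S_1,\dots,S_m$ be the minimal separators of $\Gamma$. By hypothesis they are pairwise disjoint and each $\induced\Gamma{S_i}$ is connected. For each $i$ choose a spanning tree $T_i$ of $\induced\Gamma{S_i}$. Since the $S_i$ are disjoint, $F_0=\bigcup_i T_i$ is a forest in $\Gamma$: its components are the $T_i$ together with the isolated vertices outside $\bigcup_i S_i$. Since $\Gamma$ is connected, $F_0$ extends to a spanning tree $T$ of $\Gamma$ by adding edges greedily, for instance by contracting each $T_i$ and taking a spanning tree of the quotient.

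It remains to verify $T\cap\induced\Gamma{S_i}$ is a tree for each $i$. It contains $T_i$, which already spans $S_i$. Any further edge of $T$ with both endpoints in $S_i$ would close a cycle with $T_i$ inside $T$, which is impossible. Hence $T\cap\induced\Gamma{S_i}=T_i$ is connected. Theorem~\ref{thm:dually chordal} then yields that $T$ is a tree clique-spanner.

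The only genuine content sits in the cited equivalence \eqref{item:dually chordal tcs}$\Leftrightarrow$\eqref{item:dually chordal minsep}, and here it is used only as stated. The points to watch are the degenerate cases. If $\Gamma$ is complete it has no minimal separators, and then any spanning tree works by Example~\ref{ex:TCS of tree & clique}, consistent with the construction with $m=0$. Disjointness is exactly what ensures $F_0$ is acyclic; without it, overlapping spanning trees $T_i,T_j$ could create cycles. So I expect no serious obstacle beyond confirming that the cited equivalence is for an arbitrary spanning tree, which is how Theorem~\ref{thm:dually chordal} is stated.
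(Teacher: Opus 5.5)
Your proposal is correct and follows the paper's proof essentially step for step. You choose a spanning tree of $\induced\Gamma S$ for each minimal separator $S$, use disjointness to get a forest, extend it to a spanning tree $T$, show $T\cap\induced\Gamma S=T_S$ by the cycle argument, and conclude via Theorem~\ref{thm:dually chordal}~\eqref{item:dually chordal minsep}, exactly as the paper does.
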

\begin{proof}
    Choose a spanning tree $T_S$ of $\induced\Gamma S$ for each minimal separator $S$. 
    Since the minimal separators are pairwise disjoint, the union $F=\bigcup_S T_S$ is a forest in $\Gamma$.
    Extend $F$ to a spanning tree $T$ of $\Gamma$.
    We claim that $T \cap \induced \Gamma S=T_S$. The inclusion $T_S\subseteq T\cap\Gamma_S$ follows from the construction.
    For the converse, let $e\in\ee{T\cap\Gamma_S}$. If $e\notin\ee{T_S}$, then $\{e\}\cup T_S\subseteq\Gamma_S$ contains a cycle in $\Gamma_S$. This is absurd since $\{e\}\cup T_S\subseteq T$. This proves the claim.  
    The lemma now follows from Theorem~\ref{thm:dually chordal}~\eqref{item:dually chordal minsep}.
\end{proof}

\begin{lemma}\label{lem:no t2s}
Let $\Gamma$ be a biconnected simplicial graph. Suppose that there is a maximal clique $K\subseteq \Gamma$ with disjoint minimal separators $S_1,S_2\subseteq \vv K$ of $\Gamma$. Then $\Gamma$ does not admit any tree 2-spanner.
\end{lemma}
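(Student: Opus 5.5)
The plan is to argue by contradiction. Suppose $T$ is a tree $2$-spanner of $\Gamma$. By \cite[Lemma 3.1]{CR26} (see Example~\ref{ex:kspanners}), $T$ is also a tree clique-spanner. So all the characterizations in Theorem~\ref{thm:dually chordal} apply to $T$.

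First, I determine the shape of $T\cap K$. By Theorem~\ref{thm:dually chordal}\eqref{item:dually chordal max clique}, $T\cap K$ is a tree. Its vertex set is $\vv K$, because $T$ is spanning. For any two vertices $u,v\in\vv K$, the path $[u,v]_T$ is the unique path in the subtree $T\cap K$, so $\dist{T\cap K}uv=\dist Tuv$. Since $u,v$ are adjacent in $\Gamma$, the $2$-spanner condition gives $\dist Tuv\le 2$. Hence $T\cap K$ is a spanning tree of $K$ of diameter at most $2$, which means it is a star. Let $c$ be its center, chosen arbitrarily if $|\vv K|\le 2$.

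Next, I use the separators. Since $\Gamma$ is biconnected, it has no cut vertex. A minimal separator consisting of a single vertex would be a cut vertex, so $|S_i|\ge 2$ for $i=1,2$. By Theorem~\ref{thm:dually chordal}\eqref{item:dually chordal minsep}, $T\cap\induced\Gamma{S_i}$ is connected. Because $S_i\subseteq\vv K$ and $K$ is a clique, every edge of $T$ with both ends in $S_i$ is an edge of $T\cap K$. Therefore $T\cap\induced\Gamma{S_i}$ is a connected subgraph of the star $T\cap K$ with at least two vertices. Any such subgraph consists of edges at the center, so it contains $c$. This gives $c\in S_1\cap S_2$, contradicting the disjointness of $S_1$ and $S_2$.

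The main point needing care is the first step: that distances in $T$ between vertices of $K$ are realized inside $T\cap K$. This holds because $T\cap K$ is a subtree, hence convex in $T$. The minor edge case $|\vv K|\le 2$ cannot occur, since $K$ contains two disjoint sets of size at least $2$. If one prefers to avoid invoking Theorem~\ref{thm:dually chordal}\eqref{item:dually chordal minsep}, an alternative is to use Lemma~\ref{lem:0simplexinclique} together with maximality of $K$ to obtain $T\cap K=[\vv K]_T$ directly. But the connectivity of $T\cap\induced\Gamma{S_i}$ is genuinely needed, and I would take it from that theorem.
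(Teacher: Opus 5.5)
Your argument is correct, but it takes a different route from the paper's. The paper never determines the shape of $T\cap K$. It uses Theorem~\ref{thm:dually chordal}\eqref{item:dually chordal minsep} only to pick an edge $e_i=\{u_i,v_i\}$ of $T$ inside each $\induced{\Gamma}{S_i}$, and then works in the $K_4$ spanned by $e_1,e_2$:
\begin{itemize}
\item since $T$ is acyclic, at most one of the four edges joining $\{u_1,v_1\}$ to $\{u_2,v_2\}$ lies in $T$;
\item if one does, say $\{u_1,u_2\}$, then $\dist{T}{v_1}{v_2}=3$, which violates the $2$-spanner bound;
\item if none does, the length-$2$ paths $[v_1,v_2]_T$ and $[u_1,u_2]_T$ avoid $e_1,e_2$, and together with $e_1,e_2$ they produce a cycle in $T$.
\end{itemize}
That argument uses neither item \eqref{item:dually chordal max clique} of Theorem~\ref{thm:dually chordal} nor the maximality of $K$; it only needs $S_1\cup S_2$ to span a clique.

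Your proof spends item \eqref{item:dually chordal max clique} to get the structural fact that $T\cap K$ is a star for every maximal clique $K$, after which the contradiction is immediate. In exchange it proves more. In a biconnected graph with a tree $2$-spanner, every minimal separator contained in a maximal clique $K$ contains the center $c$ of the star $T\cap K$. So all such separators share a common vertex, rather than merely intersecting pairwise.

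One small slip in your last paragraph: the alternative via Lemma~\ref{lem:0simplexinclique} and maximality of $K$ replaces item \eqref{item:dually chordal max clique}, not item \eqref{item:dually chordal minsep}. As you say, item \eqref{item:dually chordal minsep} is genuinely needed, and the paper uses it too.
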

\begin{proof}
Suppose by contradiction that $\Gamma$ admits a tree 2-spanner $T$.
Note that $T$ is a tree clique-spanner; see Example~\ref{ex:kspanners}. Thus, by Theorem~\ref{thm:dually chordal}~\eqref{item:dually chordal minsep}, the intersection $T \cap \induced{\Gamma}{S_i}$ is a tree. 
Since $\Gamma$ is biconnected, both $S_1$ and $S_2$ contain at least one edge.
Therefore, we can choose an edge $e_i=\{u_i,v_i\}\in \ee{T \cap \induced{\Gamma}{S_i}}$.

Since $S_1\cap S_2=\varnothing$ and $K$ is a clique, the edges $e_1$ and $e_2$ span a $K_4$ inside $K$. Notice that at most one edge joining an endpoint of $e_1$ to an endpoint of $e_2$ is in $\ee T$. If such an edge exists, say $\{u_1,u_2\}\in\ee T$, then $\dist{T}{v_1}{v_2}=3>2=2\dist{\Gamma}{v_1}{v_2}$, contradicting the assumption that $T$ is a tree 2-spanner. 
If no edge joining an endpoint of $e_1$ to an endpoint of $e_2$ belongs to $\ee T$, then $\dist{T}{v_1}{v_2}=2$, since $T$ is a tree 2-spanner and $\{v_1,v_2\}\in\ee K$. Let $P$ be the edge-path in $T$ between $v_1$ and $v_2$. Then $P$ does not contain $e_1$ and $e_2$. Similarly, the edge-path $Q$ in $T$ between $u_1$ and $u_2$ also avoids $e_1$ and $e_2$. Now, the union $P\cup Q\cup\{e_1,e_2\}$ contains a cycle in $T$, which contradicts the fact that $T$ is a spanning tree. Therefore, the graph $\Gamma$ does not admit any tree 2-spanner.
\end{proof}

\subsection{Locally connected spanning trees}\label{sec:loc con}
In this section, we explore an additional property of tree clique-spanners. Although this property is not needed for the group-theoretic applications developed below, it may be of independent interest.

We say that a spanning tree $T$ of $\Gamma$ is \textit{locally connected} if, for each vertex $v\in V(T)$, the subgraph of $\Gamma$ induced by $\lk{v}{T}$ is connected. 
Notice that if $\Gamma$ has a cut vertex, then no spanning tree of $\Gamma$ is locally connected.
Cai proved in \cite[Corollary 1.2]{Cai1997} that every tree $2$-spanner of a $2$-connected graph is locally connected. 
In the next proposition, we generalize Cai's result to tree clique-spanners of biconnected graphs.

\begin{proposition}\label{prop:loc_con}
Let $\Gamma$ be a biconnected simplicial graph, and let $T$ be a tree clique-spanner of $\Gamma$.
Then $T$ is locally connected.
\end{proposition}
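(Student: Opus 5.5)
Fix a vertex $v$ and write $N=\lk{v}{T}$ for its set of neighbours in $T$. The goal is to show that $\induced{\Gamma}{N}$ is connected. If $|N|\le 1$ there is nothing to prove, so assume $|N|\ge 2$ and take $a,b\in N$. Removing $v$ from $T$ splits it into subtrees $T_a$ and $T_b$ containing $a$ and $b$. Since $\Gamma$ is biconnected, $\Gamma-v$ is connected. Hence there is a path in $\Gamma-v$ from a vertex of $T_a$ to a vertex of $T_b$. More usefully, there is an edge $\{x,y\}\in\ee\Gamma$ with $x\in T_a$ and $y\in T_c$ for some component $T_c\neq T_a$ of $T-v$, where $c\in N$.

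The key step is to apply the clique-spanner property to such a crossing edge. The path $[x,y]_T$ passes through $v$, and so through $a$ and $c$, since it must leave $T_a$ via the edge $\{a,v\}$ and enter $T_c$ via $\{v,c\}$. By Definition~\ref{def:TCS}, $[x,y]_T$ lies in a clique of $\Gamma$. Therefore $a$ and $c$ are adjacent in $\Gamma$.

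Now form an auxiliary graph on the components of $T-v$, joining two components whenever some edge of $\Gamma$ runs between them. This auxiliary graph is connected, because $\Gamma-v$ is connected and every vertex other than $v$ lies in some component. By the key step, each auxiliary edge between $T_a$ and $T_c$ gives a genuine edge $\{a,c\}$ of $\induced{\Gamma}{N}$. A path in the auxiliary graph from $T_a$ to $T_b$ therefore yields a path from $a$ to $b$ in $\induced{\Gamma}{N}$, which proves local connectivity at $v$.

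I expect no serious obstacle. The only points needing care are, first, that biconnectivity is used exactly to guarantee that $\Gamma-v$ is connected. Second, one should note that $|V(\Gamma)|\ge 3$ when $v$ has at least two $T$-neighbours, so the hypothesis is meaningful.
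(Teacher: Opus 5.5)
Your proof is correct. Its key step is the same as the paper's: an edge of $\Gamma$ joining two different components of $T-v$ has its $T$-path running through the two $T$-neighbours of $v$ that root those components. That path lies in a clique, so those two neighbours are adjacent in $\Gamma$.

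Where you differ is in the source of connectivity. The paper first shows that the full subgraph of $\Gamma$ on $\lk{v}{\Gamma}$ is connected. For this it uses Lemma~\ref{lem: simply connected} to get $\flag\Gamma$ simply connected, and then \cite[Lemma 4.17(2)]{CR26}, which says links of non-cut vertices are connected in that setting. It then walks along a path in $\lk{v}{\Gamma}$ and applies the key step to consecutive vertices lying in different components. You use only that $\Gamma - v$ is connected, and pass to the auxiliary graph on the components of $T-v$.

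Your route is more elementary and self-contained. It needs neither simple connectivity of the flag complex nor the external lemma on links. It also makes plain that the clique-spanner property is used only for edges crossing between components of $T-v$. The paper's route keeps every intermediate vertex adjacent to $v$, but the conclusion does not need this.
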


\begin{proof}
Let $T$ be a tree clique-spanner of $\Gamma$, and let $v\in \vv\Gamma$.
Since $v$ is not a cut vertex and $\flag \Gamma$ is simply connected by Lemma~\ref{lem: simply connected}, the induced subgraph $\induced{\Gamma}{\lk v\Gamma}$ is connected; see \cite[Lemma 4.17 (2)]{CR26}.

For each connected component $C$ of $T\setminus\{v\}$, there is a unique vertex $r_C\in\vv C$ adjacent to $v$ in $\Gamma$. Let $C$ and $D$ be connected components of $T\setminus\{v\}$. Since $\induced{\Gamma}{\lk v\Gamma}$ is connected, there is a veretx-path $(r_C=p_0,p_1,\dots,p_n=r_D)$ in $\induced{\Gamma}{\lk v\Gamma}$, with $p_{i-1}$ adjacent to $p_i$. 
For each $0\leq i\leq n$, let $C_i$ be the connected component of $T\setminus\{v\}$ containing $p_i$. Then $C=C_0$ and $D=C_n$.

Consider $p_{i-1}$ and $p_i$. If $C_{i-1}=C_i$, then $r_{C_{i-1}}=r_{C_i}$. If $C_{i-1}\neq C_i$, then the path $[p_{i-1},p_i]_T$ must contain $r_{C_{i-1}}$ and $r_{C_i}$. 
Since $p_{i-1}$ and $p_i$ are adjacent in $\Gamma$ and $T$ is a tree clique-spanner of $\Gamma$, the path $[p_{i-1},p_i]_T$ is contained in a clique of $\Gamma$. Thus, the vertices $r_{C_{i-1}}$ and $r_{C_i}$ are adjacent in $\Gamma$.

We now obtain a sequence of vertices $r_C=r_{C_0},r_{C_1},\dots,r_{C_n}=r_D$, where two consecutive vertices are either identical or adjacent in $\Gamma$. If two consecutive vertices are identical, delete one of them. If a vertex occurs more than once but not consecutively, say $r_{C_j}=r_{C_k}$ for some $j<k$, then delete $r_{C_{j+1}},\dots,r_{C_k}$. Repeating this process produces a vertex-path from $r_C=r_{C_0}$ to $r_D=r_{C_n}$ in $\Gamma$. Since $r_{C_i}\in\lk vT$ for all $0\leq i\leq n$, this path is in $\induced{\Gamma}{\lk vT}$. Thus, the induced subgraph $\induced{\Gamma}{\lk vT}$ is connected. Hence, the tree clique-spanner $T$ is locally connected. 
\end{proof}

The following example shows that the converse of Proposition~\ref{prop:loc_con} does not hold.

\begin{example}\label{ex:mainex}
    The spanning tree $T$ (red) of the graph $\Gamma$ shown in
    Figure~\ref{fig: loc_con_no_clique} is locally connected.
    It is not a tree clique-spanner because the path $[u,v]_T$ is not contained in any clique of $\Gamma$.
    Moreover, the group $\bbg \Gamma$ is not a RAAG by \cite[Example 5.17]{CR26}, and it follows from Theorem~\ref{mainthm:raag recognition} that $\Gamma$ does not admit any tree clique-spanner. 
    This shows that \cite[Proposition 3.2]{Cai1997} does not generalize directly to the setting of higher-dimensional $k$-trees and tree clique-spanners.
\end{example}

\begin{figure}[ht!]
    \centering    \begin{tikzpicture}[scale=0.5]
\draw [thick] (3,3)--(6,2); 
\draw [thick] (6,2)--(7,5);
\draw [thick] (7,5)--(3,3);

\draw [thick] (3,3)--(2,6);
\draw [thick] (2,6)--(7,5);

\draw [thick] (6,2)--(10,6);
\draw [thick] (10,6)--(7,5);

\draw [thick] (3,3)--(3,0)--(6,2);

\draw [thick] (6,2)--(10,2);
\draw [thick] (10,2)--(10,6);
\draw [thick] (10,2)--(7,5);

\draw [thick] (6,8)--(2,6);
\draw [thick] (6,8)--(3,3);
\draw [thick] (6,8)--(3,0);
\draw [thick, red] (6,8)--(6,2);
\draw [thick, red] (6,8)--(7,5);
\draw [thick] (6,8)--(10,6);

\node [below] at (10,2) {$u$};
\node [above] at (7.16,5.1) {$v$};

\draw [thick, red] (6,8)--(2,6);
\draw [thick, red] (6,8)--(3,3);
\draw [thick, red] (6,8)--(3,0);
\draw [thick, red] (6,2)--(10,6)--(10,2);

\draw [fill] (6,8) circle [radius=0.15];
\draw [fill] (2,6) circle [radius=0.15];
\draw [fill] (3,3) circle [radius=0.15];
\draw [fill] (3,0) circle [radius=0.15];
\draw [fill] (6,2) circle [radius=0.15];
\draw [fill] (7,5) circle [radius=0.15];
\draw [fill] (10,6) circle [radius=0.15];
\draw [fill] (10,2) circle [radius=0.15];

\end{tikzpicture}
    \caption{A graph with a locally connected spanning tree and without tree clique-spanners.}
    \label{fig: loc_con_no_clique}
\end{figure}

Proposition~\ref{prop:loc_con} has two corollaries.
The first one generalizes a result of Lin--Chang--Chen for strongly chordal graphs; see \cite[Corollary 5]{LCC07}.

\begin{corollary}\label{cor:loc con dually chordal}
    Biconnected dually chordal graphs admit locally connected spanning trees.
\end{corollary}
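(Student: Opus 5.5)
The corollary should follow by combining two results already available: the characterization of dually chordal graphs in Theorem~\ref{thm:dually chordal}, and Proposition~\ref{prop:loc_con}. The plan has two steps, each a direct citation.

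First, let $\Gamma$ be a biconnected dually chordal graph. By the last assertion of Theorem~\ref{thm:dually chordal} (which rests on \cite[Theorem 2.1]{SB94}), $\Gamma$ admits a spanning tree $T$ satisfying the equivalent conditions \eqref{item:dually chordal tcs}--\eqref{item:dually chordal minsep}. In particular, $T$ is a tree clique-spanner of $\Gamma$ in the sense of Definition~\ref{def:TCS}. The only point to check is that dually chordal graphs are simplicial in the sense used in this section. They are, being intersection graphs of cliques and taken to be simple, so Definition~\ref{def:TCS} applies.

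Second, since $\Gamma$ is biconnected and $T$ is a tree clique-spanner, Proposition~\ref{prop:loc_con} applies directly. It shows that for every vertex $v$ the subgraph $\induced{\Gamma}{\lk vT}$ is connected, so $T$ is locally connected. This proves the corollary.

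I do not expect a genuine obstacle: all the substantive work is in Proposition~\ref{prop:loc_con}, namely the use of Lemma~\ref{lem: simply connected} together with \cite[Lemma 4.17 (2)]{CR26}, and in the cited equivalence from \cite{SB94}. The one thing to be careful about is the hypothesis of biconnectedness. It is genuinely necessary, since a graph with a cut vertex has no locally connected spanning tree, as noted before Proposition~\ref{prop:loc_con}. This is exactly why the statement is restricted to biconnected graphs.

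As a remark, I would also point out that this recovers \cite[Corollary 5]{LCC07}. By \cite[Corollary 3]{BDCV98}, strongly chordal graphs are dually chordal, so biconnected strongly chordal graphs are a special case.
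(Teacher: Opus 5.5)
Your proof is correct and follows the paper's argument exactly. Theorem~\ref{thm:dually chordal} gives a tree clique-spanner $T$ of the biconnected dually chordal graph $\Gamma$, and Proposition~\ref{prop:loc_con} then shows that $T$ is locally connected.
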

\begin{proof}
    Let $\Gamma$ be a biconnected dually chordal graph. Then $\Gamma$ admits a tree clique-spanner $T$ by Theorem~\ref{thm:dually chordal}, and $T$ is locally connected by Proposition~\ref{prop:loc_con}.
\end{proof}

\begin{corollary}
    Let $\Gamma$ be biconnected  and dually chordal.
    Then $\Gamma$ admits a trefoil-free spanning 2-tree.
\end{corollary}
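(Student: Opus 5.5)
The plan is to go through locally connected spanning trees, using Corollary~\ref{cor:loc con dually chordal}, and then turn such a tree into a $2$-tree along the lines of Cai's construction in \cite{Cai1997}. Here a $2$-tree is a graph built from a triangle by repeatedly adding a new vertex adjacent to both endpoints of an existing edge. \emph{Trefoil-free} means there is no subgraph isomorphic to the trefoil graph of Figure~\ref{fig:trefoil with T3S}, that is, no triangle all three of whose edges lie in further triangles.

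First, since $\Gamma$ is biconnected and dually chordal, Theorem~\ref{thm:dually chordal} gives a tree clique-spanner $T$. By Proposition~\ref{prop:loc_con}, $T$ is locally connected: for each $v$, the graph $\induced{\Gamma}{\lk vT}$ is connected.

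Second, I build the spanning $2$-tree $\Theta \supseteq T$ inductively. Root $T$ at a vertex $r$ and add vertices in breadth-first order. At the start, pick a spanning tree $S_r$ of $\induced{\Gamma}{\lk rT}$; the cone of $r$ over $S_r$ is a $2$-tree, because $S_r$ is a tree and each new neighbour of $r$ is attached to an existing edge $\{r,a\}$. When we later process a child $w$ of $r$, its children in $T$ together with its parent $r$ form $\lk wT$, which is connected in $\Gamma$. I take a spanning tree $S_w$ of $\induced{\Gamma}{\lk wT}$ and attach the children of $w$ in an order in which each new child $c$ is $\Gamma$-adjacent to an already present vertex $a$ of $\lk wT$. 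Adding $c$ with neighbours $w$ and $a$ glues a triangle along the existing edge $\{w,a\}$. Repeating this at every vertex covers all of $\vv\Gamma$, and the result is a spanning $2$-tree of $\Gamma$. Every edge $\{a,c\}$ used has $[a,c]_T=(a,w,c)$, so each triangle $\{w,a,c\}$ lies in the supporting clique $\suppclique Tac$.

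Third, and this is the main obstacle, I must show $\Theta$ is trefoil-free, which requires choosing the spanning trees $S_w$ carefully. Every triangle of $\Theta$ has the form $\{w,a,c\}$ with $\{w,a\},\{w,c\}\in\ee T$, and it contains exactly one non-tree edge $\{a,c\}$. A trefoil would consist of a central triangle $\{w,a,c\}$ together with three further triangles, one on each of its edges. The two tree edges $\{w,a\}$ and $\{w,c\}$ may each carry other triangles of the fan at $w$. The delicate case is therefore the non-tree edge $\{a,c\}$, which can carry another triangle only if $a$ and $c$ are $T$-adjacent. That would contradict $[a,c]_T=(a,w,c)$, since $T$ is a tree. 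So the remaining danger is configurations in which a fan at $w$ meets a fan at a neighbour of $w$.

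My first attempt is to choose each $S_w$ to be a \emph{star} in $\induced{\Gamma}{\lk wT}$ whenever possible, and otherwise a path. The tree clique-spanner property helps here: any two children of $w$ whose $T$-path passes through $w$ and which are adjacent lie in a common clique with $w$. Then $\lk wT$ meets each maximal clique in a clique, by Theorem~\ref{thm:dually chordal}\eqref{item:dually chordal max clique}. This lets me reorganise the fan so that no triangle has all three of its edges shared. Carrying out this case analysis, following Cai's argument for tree $2$-spanners, is where I expect most of the work to lie.
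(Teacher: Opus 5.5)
Your first two steps follow the paper's route. Theorem~\ref{thm:dually chordal} gives a tree clique-spanner $T$, Proposition~\ref{prop:loc_con} makes it locally connected, and your breadth-first gluing of triangles along $T$ is the standard way to turn a locally connected spanning tree into a spanning $2$-tree. You should root $T$ at a vertex of $T$-degree at least $2$, so that the initial triangle exists. The paper stops at local connectivity and invokes \cite[Theorem 1.1]{Cai1997}, which turns a locally connected spanning tree into a trefoil-free spanning $2$-tree. You are in effect reproving that direction of Cai's result.

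As written, however, your third step is not a proof. You defer trefoil-freeness to an unspecified case analysis with a vague rule for choosing stars, and you assert a ``remaining danger'' from fans at adjacent vertices. That danger does not exist, and any choice of the spanning trees $S_w$ works. In your construction every new vertex is glued along a tree edge $\{w,a\}$, never along a non-tree edge. Every triangle of a $2$-tree is one of its construction triangles. Hence each non-tree edge $\{a,c\}\notin\ee T$ of $\Theta$ lies in exactly one triangle of $\Theta$, namely $\{w,a,c\}$. Equivalently, a second triangle $\{a,c,p\}$ would have its two tree edges at $p$, making $p$ a second common $T$-neighbour of $a$ and $c$, which is impossible in a tree. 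Your stated reason, that $a$ and $c$ would have to be $T$-adjacent, misses this case, though it too is excluded. Every triangle of $\Theta$ therefore contains a non-tree edge shared with no other triangle. So no triangle of $\Theta$ has all three edges in further triangles, and no triangle can be the central triangle of a trefoil. Adding this observation closes the gap and makes your argument a self-contained replacement for the citation of Cai's theorem.
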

\begin{proof}
    This follows from Proposition~\ref{prop:loc_con} and \cite[Theorem 1.1]{Cai1997}.
\end{proof}

\section{Isomorphism problems for BBGs}\label{sec:isoproblems}
We now apply the theory of tree clique-spanners developed above to the two isomorphism problems for BBGs mentioned in the Introduction. 

\subsection{BBGs on dually chordal graphs are RAAGs}
In this section, we deal with Problem~\ref{problem raag}.

\begin{definition}[Dual graph]\label{def:dual_graph}
Let $\Gamma$ be a simplicial graph, and let $T$ be a tree clique-spanner of $\Gamma$.
The \textit{dual graph} of $T$ is 
the graph $\dualtree T$ whose vertices are the edges of $T$, and two vertices are adjacent if and only if the corresponding edges of $T$ are contained in the same clique of $\Gamma$.
\end{definition}

Note that $\dualtree T$ is not defined as a subgraph of $\Gamma$.
We will prove in Corollary~\ref{cor:combinatorial applications} that any two tree clique-spanners in $\Gamma$ have isomorphic dual graphs.

We now show prove Theorem~\ref{mainthm:raag recognition}, by showing that a tree clique-spanner $T$ of $\Gamma$ can be used to simplify  the Dicks--Leary presentation \eqref{eq:dicksleary} of $\bbg \Gamma$  to the RAAG presentation of the RAAG on $\dualtree T$; see  \eqref{eq:bbg tcs raag} below.

\begin{proof}[Proof of Theorem~\ref{mainthm:raag recognition}]
    By Lemma~\ref{lem: simply connected}, the flag complex $\flag\Gamma$ is simply connected, so $\bbg \Gamma$ is finitely presented by \cite{BB1997}. Fix an orientation of the edges of $\Gamma$,  and consider the finite presentation \eqref{eq:dicksleary} from \cite{DicksLeary99}.
    
    Let $T$ be a tree clique-spanner of $\Gamma$.
    We partition the set of unoriented edges $\ee \Gamma$ into $\ee T$ and its complement $\ee\Gamma \setminus \ee T$, and we partition the generating set $\ee \Gamma ^\pm$ accordingly into 
    $$\ee T^\pm=\{t,\bar t\in \ee\Gamma^\pm \mid t\in \ee T\} \textrm{\quad and \quad} \ee\Gamma^\pm \setminus \ee T^\pm.$$

    Our strategy is to use $T$ to reduce the generating set from $\ee \Gamma ^\pm$ to $\ee T ^\pm$ by expressing the edges not in $T$ as paths in $T$.
    Consider the following (infinite) collections of words in $\ee \Gamma^\pm$:
    \begin{enumerate}
        \item $C_T$ consists of commutators of the form $[t_i,t_j]$, where $t_i$ and $t_j$ are edges in $T$ that are contained in the same clique of $\Gamma$.
        \item $L_T$ consists of \textit{long} relators of the form $e=t_{1}^{\epsilon_1}\cdots t_{{d}}^{\epsilon_{d}}$, where $e \not \in \ee T^\pm$, $d\in \nn$, $t_{i}\in \ee T^\pm$ for $i=1,\dots,d$,  $(\bar e,t_{1},\dots,t_{{d}})$ is an oriented cycle in $\Gamma$, and $\epsilon_i = \pm 1$  is chosen according to the fixed orientation of $\Gamma$. 
    \end{enumerate}
    Since the BBG defined by a clique is abelian, two edges commute if they appear in the same clique.
    Moreover, $L_T\subsetneq R_\Gamma$. 
    Thus, all the  words in $C_T$ and $L_T$ represent the trivial element in $\bbg \Gamma$. Therefore, they can be added as redundant relators to the Dicks--Leary presentation \eqref{eq:dicksleary}, giving the following alternative presentation:
    \begin{equation}\label{eq:dicksleary2}
    \bbg \Gamma =  \langle \ee\Gamma^\pm \mid I_\Gamma,  R^\Delta_\Gamma,C_T,L_T  \rangle.
    \end{equation}

    Let $\tau = (v_1,v_2,v_3)$ be an oriented triangle with vertices $v_i$, and denote its edges by $f_1=(v_2,v_3)$, $f_2=(v_3,v_1)$, and $f_3=(v_1,v_2)$.
    Let $\suppcliqueclique T\tau$ be the supporting clique of $\tau$ from Definition~\ref{def:supp_clique}.    
    The triangle $\tau$ gives rise to two relators $f_1f_2f_3$ and $f_3f_2f_1$ in $R^\Delta_\Gamma$.
    We claim that both relators follow from the relators in $C_T$ and $L_T$. Note that for an oriented edge $f_k=(v_i,v_j)$, we have the following equality in $\bbg\Gamma$: 
    $$f_k= t_1^{\epsilon_1}\cdots t_p^{\epsilon_p},
    $$ 
    where $p\in \nn$, $t_1,\dots,t_p$ are the edges along $[v_i,v_j]_T$, and $\epsilon_i=\pm 1$ is chosen according to the orientation of $\Gamma$.
    This is clear if $f_k\in \ee T ^\pm$ (just take $p=1$), while for $f_k\in \ee\Gamma^\pm \setminus \ee T^\pm$, we use the long relators from $L_T$.
    
    It follows that $f_1f_2f_3$ can be written as a product of elements of $\ee T ^\pm$ corresponding to the edges of $T$ along the edge-loop $\gamma=[v_2,v_3]_T [v_3,v_1]_T[v_1,v_2]_T$.
    Note that $\gamma $ is contained in $[v_1,v_2,v_3]_T$ and, therefore, in the supporting clique $\suppcliqueclique T\tau$. In particular, all the edges that appear in $\gamma$ commute.
    Moreover, since $[v_1,v_2,v_3]_T$ is a subdivision of a (possibly degenerate) tripod, all the edges along $\gamma$ appear in pairs with opposite orientations.
    Thus, everything cancels out in the expression of $f_1f_2f_3$ as a product of elements of $\ee T ^\pm$.
    The same argument applies to the other relator $f_3f_2f_1$ associated with $\tau$. This proves the claim.
  
    This shows that we can remove $R^\Delta_\Gamma$ from the presentation \eqref{eq:dicksleary2}. Thus, we obtain the following presentation:
\begin{equation}\label{eq:dicksleary3}
    \bbg \Gamma =  \langle \ee\Gamma^\pm \mid I_\Gamma,C_T,L_T  \rangle.
    \end{equation}
    
    Finally, 
    since every generator from $\ee\Gamma^\pm \setminus \ee T^\pm$ appears as a single letter in a relator from $L_T$,
    we can simultaneously drop $\ee\Gamma^\pm \setminus \ee T^\pm$ and $L_T$.
    We also drop all relators $I_\Gamma$ and all the generators of the form $\bar e \in \ee T^-$.
    What is left is the presentation
    \begin{equation}\label{eq:bbg tcs raag}
    \bbg \Gamma =\langle \ee T \mid C_T\rangle,
    \end{equation}
    which is a RAAG presentation. 
    Moreover, it follows directly from the definitions of $\dualtree T$ and $C_T$ that this is a presentation of the RAAG $\raag{\dualtree T}$, where $\dualtree T$ is the dual graph of $T$.
\end{proof}

As an application, we can recognize many graphs whose associated BBGs are RAAGs.

\begin{proof}[Proof of Corollary~\ref{cor:more bbgs}]
For each of the  cases, we check that $\Gamma$ admits a tree clique-spanner, and then apply Theorem~\ref{mainthm:raag recognition} to conclude that $\bbg \Gamma$ is a RAAG.

If $\Gamma$ is dually chordal, then it admits a tree clique-spanner by  Theorem~\ref{thm:dually chordal}.
If $\Gamma$ is strongly chordal, then it is dually chordal by \cite[Corollary 3]{BDCV98}. 
Finally, if $\Gamma$ has connected and pairwise disjoint minimal separators, then $\Gamma$  admits a tree clique-spanner by Lemma~\ref{lem:disjoint_minsep_tcs}.
\end{proof}

\begin{remark}\label{rmk:new}
It is not difficult to construct graphs that satisfy one of the conditions in Corollary~\ref{cor:more bbgs} and the hypotheses of Lemma~\ref{lem:no t2s}; see the right picture in Figure~\ref{fig:TCS}. Such graphs admit tree clique-spanners but do not admit tree 2-spanners.
Thus, Corollary~\ref{cor:more bbgs} provides infinitely many new examples of BBGs that were not previously known to be RAAGs.
\end{remark}

We now present some combinatorial applications of Theorem~\ref{mainthm:raag recognition} that generalize results on tree 2-spanners from \cite[\S 3]{CR26}.
Recall that non-isomorphic graphs can have isomorphic BBGs; see Proposition~\ref{free} and Proposition~\ref{prop:2isobbg}.
Since a graph is dually chordal if and only if it admits a tree clique-spanner (Theorem~\ref{thm:dually chordal}), the following result provides a solution to the graph isomorphism problem (Problem~\ref{problem graph}) for BBGs on dually chordal graphs.

\begin{corollary}\label{cor:combinatorial applications}
The following statements hold.
    \begin{enumerate}
    \item \label{item:isobbg} Let $\Gamma$ and $\Lambda$ be two simplicial graphs admitting tree clique-spanners $T_\Gamma$ and $T_\Lambda$,
    respectively. Then $\bbg \Gamma \cong \bbg \Lambda$  if and only if $\dualtree{T_\Gamma} \cong \dualtree{T_\Lambda}$.

    \item \label{item:isodual} If $T_1$ and $T_2$ are tree clique-spanners of $\Gamma$, then  $T_1^\ast\cong T_2^\ast$. 
    \end{enumerate}
\end{corollary}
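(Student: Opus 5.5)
The plan is to deduce both items from Theorem~\ref{mainthm:raag recognition} combined with Droms' rigidity theorem for RAAGs \cite{DromsIsomorphismsofGraphGroups}, which says that $\raag{\Lambda_1}\cong\raag{\Lambda_2}$ if and only if $\Lambda_1\cong\Lambda_2$ for simplicial graphs $\Lambda_1,\Lambda_2$.

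For item~\eqref{item:isobbg}, Theorem~\ref{mainthm:raag recognition} gives isomorphisms $\bbg\Gamma\cong\raag{\dualtree{T_\Gamma}}$ and $\bbg\Lambda\cong\raag{\dualtree{T_\Lambda}}$. Hence $\bbg\Gamma\cong\bbg\Lambda$ holds if and only if $\raag{\dualtree{T_\Gamma}}\cong\raag{\dualtree{T_\Lambda}}$, and Droms' theorem converts this into $\dualtree{T_\Gamma}\cong\dualtree{T_\Lambda}$.

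The only point to check before invoking Droms is that the dual graphs are simplicial. By Definition~\ref{def:dual_graph}, $\dualtree T$ has vertex set $\ee T$, and adjacency is a symmetric relation between distinct edges of $T$. So $\dualtree T$ has no loops and no multiple edges, and is simplicial. This matters because, for non-simplicial graphs, the RAAG only sees the underlying simplicial graph (Remark~\ref{remark:non simplicial RAAG BBG}), and Droms' theorem would then fail to recover the graph itself.

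For item~\eqref{item:isodual}, apply item~\eqref{item:isobbg} with $\Lambda=\Gamma$, $T_\Gamma=T_1$ and $T_\Lambda=T_2$. The identity gives $\bbg\Gamma\cong\bbg\Gamma$, so $T_1^\ast\cong T_2^\ast$.

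There is no real obstacle here. The substantive work is already contained in Theorem~\ref{mainthm:raag recognition} and in Droms' theorem, and what remains is the simpliciality check above.
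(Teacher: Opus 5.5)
Your proposal is correct and matches the paper's proof: item (1) combines Theorem~\ref{mainthm:raag recognition} with Droms' rigidity theorem, and item (2) is item (1) with $\Lambda=\Gamma$. Your explicit check that $\dualtree T$ is simplicial is a harmless addition that the paper leaves implicit.
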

\begin{proof}
    By Theorem~\ref{mainthm:raag recognition}, we have 
    $\bbg \Gamma \cong \raag{T_\Gamma^*}$ and $ \bbg \Lambda \cong \raag{T_\Lambda^*}$.
    Then \eqref{item:isobbg} follows from the fact that two RAAGs are isomorphic if and only if their defining graphs are isomorphic \cite{DromsIsomorphismsofGraphGroups}. 
    Taking $\Lambda=\Gamma$ in \eqref{item:isobbg} proves \eqref{item:isodual}.
\end{proof}

The following may be of independent interest, and it is stated for a possibly infinite flag simplicial complex $\Delta$. Of course, it includes the case that $\Delta$ itself is compact and admits a tree clique-spanner.

\begin{corollary}\label{cor:contractible}
    If $\Delta$ is a  flag simplicial complex such that every compact subcomplex is contained in a compact subcomplex whose 1-skeleton admits a tree clique-spanner, then $\Delta$ is contractible.
\end{corollary}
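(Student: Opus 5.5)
The plan is to reduce to the compact case, and in the compact case to combine Theorem~\ref{mainthm:raag recognition} with the cohomological characterization of finiteness properties of BBGs due to Bestvina--Brady~\cite{BB1997}.

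\textbf{Compact case.} Suppose first that $\Delta=\flag\Gamma$ is finite and $\Gamma$ admits a tree clique-spanner $T$.
\begin{enumerate}
\item By Theorem~\ref{mainthm:raag recognition}, $\bbg\Gamma\cong\raag{\dualtree T}$.
\item RAAGs are of type $F$, since the Salvetti complex is a finite $K(\pi,1)$. Hence $\bbg\Gamma$ is of type $F$, and in particular of type $FP$ over $\zz$.
\item By the main theorem of \cite{BB1997}, $\bbg\Gamma$ is of type $FP$ over $\zz$ (equivalently $FP_\infty$, since $\bbg\Gamma$ is torsion-free of finite cohomological dimension) if and only if $\flag\Gamma$ is acyclic over $\zz$. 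So $\flag\Gamma$ is acyclic.
\item By Lemma~\ref{lem: simply connected}, $\flag\Gamma$ is simply connected.
\item A simply connected acyclic CW complex is contractible, by the Hurewicz and Whitehead theorems.
\end{enumerate}

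If I prefer to avoid the $FP$ criterion, I can instead use that $\bbg\Gamma$ is finitely presented with cohomological dimension at most $\dim\flag\Gamma$. There is also the Bestvina--Brady computation that $H_n(\bbg\Gamma)$ contains copies of $\tilde H_{n-1}(\flag\Gamma)$, so a finite-type group forces these reduced homology groups to vanish. Either route is a citation rather than a computation.

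\textbf{General case.} Let $\Delta$ be possibly infinite. By Whitehead's theorem it suffices to show that every homotopy group $\pi_n(\Delta)$ vanishes.
\begin{enumerate}
\item Any map $S^n\to\Delta$ is homotopic to a simplicial, hence cellular, map with compact image, contained in some compact subcomplex $L$.
\item By hypothesis, $L\subseteq L'$ for a compact subcomplex $L'$ whose 1-skeleton admits a tree clique-spanner.
\item Since $\Delta$ is flag, I may replace $L'$ by the full subcomplex spanned by its vertices, which is $\flag{\Gamma'}$ with $\Gamma'$ the 1-skeleton. I read the hypothesis as meaning that the 1-skeleton is the graph $\Gamma'$ with $L'=\flag{\Gamma'}$.
\item By the compact case, $L'$ is contractible, so the sphere is null-homotopic in $L'\subseteq\Delta$.
\end{enumerate}

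\textbf{Main obstacle.} The delicate step is item 3 of the compact case: invoking the correct form of the Bestvina--Brady theorem. Their theorem is stated for finiteness properties over a ring $R$ in terms of $R$-acyclicity of $\flag\Gamma$, and I need the integral version to get genuine acyclicity. I also need to confirm that type $F$ implies $FP$ over $\zz$, which is standard. A minor subtlety is checking that the subcomplexes in the hypothesis are full, so that they really are flag complexes of their 1-skeleta.
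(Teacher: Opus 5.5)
Your proposal is correct and follows the paper's proof essentially step for step. The compact case uses Theorem~\ref{mainthm:raag recognition}, finiteness of RAAGs, the Bestvina--Brady acyclicity criterion, Lemma~\ref{lem: simply connected} and Hurewicz; the general case pushes an arbitrary sphere into a compact subcomplex and then applies Whitehead's theorem.

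One small correction: no fullness is needed. Flagness of $\Delta$ alone gives $L'\subseteq\flag{\Gamma'}\subseteq\Delta$, and this is exactly what the paper uses. Note that the full subcomplex on the vertices of $L'$ can be strictly larger than $\flag{\Gamma'}$, so it is not the right replacement.
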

\begin{proof}
    Let $f\colon S^n\to \Delta$ be a continuous map for some $n\geq 0$.
    Since $S^n$ is compact, the image of $f$ is contained in a compact subcomplex $K$ of $\Delta$. By assumption, the compact subcomplex $K$ is contained in a compact subcomplex $K'$ of $\Delta$ whose 1-skeleton $\Gamma$ admits a tree clique-spanner. 
    Since $\Delta$ is a flag simplicial complex, the flag complex $\flag\Gamma$ is contained in $\Delta$ and contains $K'$. In particular, it contains $f(S^n)$. 
    Note that the case where $n=0$ implies that $\Delta$ is path-connected, since $\Gamma$ is connected.

    By Theorem~\ref{mainthm:raag recognition}, the group $\bbg  \Gamma$ is a RAAG, and hence a group of finite type. It follows from \cite{BB1997} that $\flag \Gamma$ is acyclic. 
    Since $\flag \Gamma$ is also simply connected by Lemma~\ref{lem: simply connected}, it follows from the  Hurewicz theorem that $\pi_n(\flag\Gamma)=0$ for $n\geq 1$.
    Thus, the map $f$ is nullhomotopic. Hence, we have $\pi_n(\Delta)=0$ for $n\geq1$.
    By the Whitehead theorem, we can conclude that $\Delta$ is contractible.
\end{proof}

\subsection{Isomorphic BBGs on graphs with non-isomorphic matroids}\label{sec:non isom bbgs}
In this section, we deal with Problem~\ref{problem graph}.
Recall that a graph is \emph{$k$-connected}  if it has more than $k$ vertices, and removing fewer than $k$ vertices does not disconnect the graph.
For instance, being $1$-connected is equivalent to being connected, and a $2$-connected graph is also biconnected; that is, it is connected and has no cut vertices. 

\begin{lemma}\label{lem: dual graph of chordal is chordal}
    Let $\Gamma$ be a $k$-connected chordal graph with a tree clique-spanner $T$. Then $\dualtree T$ is a $(k-1)$-connected chordal graph.
\end{lemma}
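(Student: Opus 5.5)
The plan is to prove the two properties of $\dualtree T$ separately, and to work throughout with maximal cliques of $\Gamma$ rather than arbitrary cliques. Two edges $t_1,t_2\in \ee T$ lie in a common clique if and only if they lie in a common maximal clique. Hence $\dualtree T$ is the union, over all maximal cliques $K$ of $\Gamma$, of the complete graphs on $\ee{T\cap K}$. By Theorem~\ref{thm:dually chordal}\eqref{item:dually chordal max clique}, each $T\cap K$ is a subtree of $T$.

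\emph{Chordality.} Since $\Gamma$ is chordal, it has a clique tree $\mathcal T$. This is a tree whose nodes are the maximal cliques of $\Gamma$, and for each vertex $v$ the set $\mathcal T_v$ of maximal cliques containing $v$ spans a subtree of $\mathcal T$. For an edge $t=\{u,v\}\in\ee T$, the set of maximal cliques containing $t$ is $\mathcal T_u\cap\mathcal T_v$. This is an intersection of subtrees of $\mathcal T$, so it is again a nonempty subtree $\mathcal T_t$. By the description above, $t_1$ and $t_2$ are adjacent in $\dualtree T$ exactly when $\mathcal T_{t_1}\cap\mathcal T_{t_2}\neq\varnothing$. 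So $\dualtree T$ is the intersection graph of a family of subtrees of a tree, and Gavril's theorem gives that it is chordal.

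\emph{Connectivity.} $\dualtree T$ has $|\vv\Gamma|-1>k-1$ vertices, so the vertex count is fine. Let $X\subseteq\ee T$ with $|X|\le k-2$, and let $t,t'\in\ee T\setminus X$. I would first choose a set $W$ of at most $k-2$ vertices of $\Gamma$ by picking one endpoint of each $x\in X$, avoiding the endpoints of $t$ and $t'$ whenever possible. Since $\Gamma$ is $k$-connected, $\Gamma\setminus W$ is connected. So there is a path $a_0,a_1,\dots,a_m$ in $\Gamma\setminus W$ from an endpoint of $t$ to an endpoint of $t'$.

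For each step, let $K_i$ be a maximal clique containing $[a_{i-1},a_i]_T$. The set $\ee{T\cap K_i}\setminus X$ spans a complete subgraph of $\dualtree T-X$. The goal is to chain these complete subgraphs together, using the fact that consecutive cliques $K_i$ and $K_{i+1}$ both contain $a_i$. I would look for a $T$-edge not in $X$ that is incident to $a_i$ and lies in both $T\cap K_i$ and $T\cap K_{i+1}$. Such an edge should exist because both subtrees contain $a_i$, and the only $T$-edges at $a_i$ that lie in $X$ have their other endpoint in $W$ (since $a_i\notin W$). Finally, I need to attach $t$ to $K_1$ and $t'$ to $K_m$ in the same way.

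I expect the main obstacle to be this chaining step. Two subtrees of $T$ containing $a_i$ need not share an edge, and $T\cap K_i$ may consist only of edges of $X$. If the naive argument fails, the first fix I would try is a Menger-type argument. By $k$-connectivity there are $k$ internally disjoint paths between suitable endpoints, while $X$ touches at most $k-2$ of them, which leaves slack to reroute past any clique whose $T$-edges are all in $X$. To merge two subtrees of $T$ meeting only at $a_i$, I would use that $a_i$ is not a cut vertex of $\Gamma$, together with the local connectivity of $T$ from Proposition~\ref{prop:loc_con}.
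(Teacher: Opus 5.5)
Your chordality argument is correct, and it is a cleaner route than the paper's. The paper asserts that $\dualtree T$ is obtained by gluing the cliques $\dualtree{(T\cap K)}$ along the cliques $\dualtree{(T\cap \induced\Gamma S)}$. You instead exhibit $\dualtree T$ directly as the intersection graph of the subtrees $\mathcal T_t=\mathcal T_u\cap\mathcal T_v$ of a clique tree and apply Gavril's theorem. Your argument in fact shows that the graph defined by the same rule is chordal for \emph{every} spanning tree of a chordal graph, so the clique-spanner hypothesis is only needed for connectivity.

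The connectivity half has a genuine gap, at exactly the step you flag. The claim that $T\cap K_i$ and $T\cap K_{i+1}$ share a $T$-edge at $a_i$ outside $X$ is false even when $X=\varnothing$. Let $\Gamma$ be the cone with apex $v$ over the path $p_0p_1p_2p_3$, which is a $2$-connected chordal graph. Let $T$ be the star at $v$, which is a tree clique-spanner, and take the path $p_0,v,p_3$ in $\Gamma$. The only maximal clique containing $[p_0,v]_T$ is $\{v,p_0,p_1\}$, and the only one containing $[v,p_3]_T$ is $\{v,p_2,p_3\}$; they have no edge of $T$ in common. Your proposed repairs are not carried out. Moreover, Proposition~\ref{prop:loc_con} does not supply what is needed: it gives connectivity of the subgraph induced by $\lk{a_i}{T}$. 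To route around $X$ at $a_i$ you would need something like connectivity after deleting $W$, and you would also have to ensure that the $T$-edges at $a_i$ on $[a_{i-1},a_i]_T$ and $[a_i,a_{i+1}]_T$ are not themselves in $X$.

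The missing idea is to reuse the clique tree $\mathcal T$ you already built, together with Theorem~\ref{thm:dually chordal}\eqref{item:dually chordal minsep}; the path in $\Gamma\setminus W$ is unnecessary.

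\emph{Shared edges across a clique-tree edge.} Let $K,K'$ be adjacent nodes of $\mathcal T$. Then $S=\vv K\cap\vv{K'}$ is a minimal separator of $\Gamma$ (a standard property of clique trees), so $|S|\geq k$. Also $T\cap\induced\Gamma S$ is a tree containing all of $S$, that is, a spanning tree of the clique $\induced\Gamma S$. Thus $K$ and $K'$ share $|S|-1\geq k-1>|X|$ edges of $T$, and at least one of them avoids $X$.

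\emph{Chaining.} Given $t,t'\notin X$, choose $K_0\in\mathcal T_t$ and $K_m\in\mathcal T_{t'}$, and walk along the $\mathcal T$-path $K_0,\dots,K_m$. At each step pick $s_j\in\ee{T\cap\induced\Gamma{S_j}}\setminus X$, where $S_j=\vv{K_{j-1}}\cap\vv{K_j}$. Consecutive terms of $t,s_1,\dots,s_m,t'$ lie in a common maximal clique, so this is a walk in $\dualtree T-X$.

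This is the paper's argument: gluing along cliques with $|S|-1\geq k-1$ vertices. It is also where the clique-spanner hypothesis really enters. For an arbitrary spanning tree, $T\cap\induced\Gamma S$ can be disconnected, and then $\dualtree T$ itself can be disconnected.
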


\begin{proof}
Since $\Gamma$ is a $k$-connected chordal graph, every minimal separator of $\Gamma$ is a clique with at least $k$ vertices; see \cite{Dirac1961}. 

Let $K$ be a maximal clique of $\Gamma$. By Theorem~\ref{thm:dually chordal}~\eqref{item:dually chordal max clique}, the intersection $T\cap K$ is a spanning tree of $K$. Thus, the dual graph $\dualtree{(T\cap K)}$ is a clique with $|\vv K|-1$ vertices. 

Let $K_1$ and $K_2$ be maximal cliques of $\Gamma$ such that $S=\vv{K_1}\cap\vv{K_2}$ is a minimal separator of $\Gamma$. Then $\dualtree{(T\cap K_1)}\cap\dualtree{(T\cap K_2)}=\dualtree{(T \cap \induced{\Gamma}{S})}$. Since $T$ is a tree clique-spanner, it follows from Theorem~\ref{thm:dually chordal}~\eqref{item:dually chordal minsep} that $T \cap \induced{\Gamma}{S}$ is a spanning tree of $\induced{\Gamma}{S}$. Since $\induced{\Gamma}{S}$ is a clique, the dual graph $\dualtree{(T \cap \induced{\Gamma}{S})}$ is a clique with $|S|-1$ vertices.

It follows that $\dualtree T$ is obtained by gluing the cliques $\dualtree{(T\cap K_i)}$ as  $K_i$  ranges over the maximal cliques of $\Gamma$, along the cliques $\dualtree{(T \cap \induced{\Gamma}{S})}$ as $S$ ranges over the minimal separators of $\Gamma$. 
Therefore, the dual graph $\dualtree T$ is also chordal. 
Since $|S|\geq k$ for each minimal separator $S$ of $\Gamma$, the minimal separator $\dualtree{(T \cap \induced{\Gamma}{S})}$ has at least $|S|-1\geq k-1$ vertices. Hence, the graph $\dualtree T$ is $(k-1)$-connected. This completes the proof. 
\end{proof}

We are now ready to prove Theorem~\ref{introthm non isom}, which  gives a family of examples of isomorphic BBGs defined by non-isomorphic $k$-connected graphs.
This was previously known for $k=1$ (see Proposition~\ref{free}) and $k=2$ (see \cite[Example 3.7]{CR26} and Figure~\ref{fig:noniso_2connected}).

\begin{proof}[Proof of Theorem~\ref{introthm non isom}]
Let $\Gamma$ be a $k$-connected chordal graph with at least two minimal separators such that all the minimal separators are pairwise disjoint.
For concreteness, such $\Gamma$ can be obtained by gluing cliques along disjoint subcliques with at least $k$ vertices; see the right-hand side of Figure~\ref{fig:TCS} for $k=2$.

Since $\Gamma$ is chordal, its minimal separators are cliques, and therefore connected.
Then $\Gamma$ admits a tree clique-spanner $T$ by Lemma~\ref{lem:disjoint_minsep_tcs}, and it follows from Theorem~\ref{mainthm:raag recognition} that $\bbg\Gamma\cong\raag{\dualtree T}$, where $\dualtree T$ is the dual graph of $T$.
Let $\Lambda$ be the cone over $\dualtree T$. Then $\bbg\Lambda\cong\raag{\dualtree T}$; see \cite[Corollary 3.10]{CR26}. Therefore, $\bbg\Gamma\cong\bbg\Lambda$. By Theorem~\ref{mainthm matroid intro}, this is equivalent to saying that $\bbgm{\mathfrak M_\Gamma} \cong \bbgm{\mathfrak M_\Lambda}$.

It follows from Lemma~\ref{lem: dual graph of chordal is chordal} that $\dualtree T$ is $(k-1)$-connected. Thus, the graph $\Lambda$ is $k$-connected.
However, these two graphs $\Gamma$ and $\Lambda$ are not isomorphic because $\Lambda$ is a cone graph whereas $\Gamma$ is not, since $\Gamma$ has at least two disjoint minimal separators. 

Finally, note that two $3$-connected graphs are isomorphic if and only if they have isomorphic cycle matroids (see \cite[Theorem 5.3.1]{Oxley}). Therefore, we have $\mathfrak M_\Gamma\not \cong \mathfrak M_\Lambda$ for $k\geq 3$.
\end{proof}

\begin{remark}\label{rmk:final}
Some remarks about the proof of Theorem~\ref{introthm non isom} are in order.
\begin{enumerate}

    \item A key ingredient in the proof of Theorem~\ref{introthm non isom} is that the BBGs involved in the construction are all isomorphic to RAAGs.
    Note that for $k\geq2$, the graph $\Gamma$ in the proof satisfies the assumptions of Lemma~\ref{lem:no t2s}, so it does not admit a tree 2-spanner.
    Thus, the theory of tree clique-spanners developed above is really needed here.   

    \item The $k$-connectedness of a graph $\Gamma$ is not related to the finiteness properties of $\bbg \Gamma$. Indeed, the graphs used in Theorem~\ref{introthm non isom} are chordal, and in particular, their flag complexes are contractible.
    Thus, it follows from \cite{BB1997} that their BBGs are finitely presented and of type FP($R$) for any ring $R$ with $0\neq 1$. Moreover, they are coherent by \cite{BL25}.
\end{enumerate}
\end{remark}

\printbibliography

@article {CR26,
    AUTHOR = {Chang, Yu-Chan and Ruffoni, Lorenzo},
     TITLE = {A graphical description of the {BNS}-invariants of
              {B}estvina-{B}rady groups and the {RAAG} recognition problem},
   JOURNAL = {Groups Geom. Dyn.},
  FJOURNAL = {Groups, Geometry, and Dynamics},
    VOLUME = {20},
      YEAR = {2026},
    NUMBER = {2},
     PAGES = {487--539},
      ISSN = {1661-7207,1661-7215},
   MRCLASS = {20F36 (20F05 20F65 20J05)},
  MRNUMBER = {5047111},
       DOI = {10.4171/ggd/823},
       URL = {https://doi-org.proxy.lib.ohio-state.edu/10.4171/ggd/823},
}

@article {CH07,
    AUTHOR = {Charney, Ruth},
     TITLE = {An introduction to right-angled {A}rtin groups},
   JOURNAL = {Geom. Dedicata},
  FJOURNAL = {Geometriae Dedicata},
    VOLUME = {125},
      YEAR = {2007},
     PAGES = {141--158},
      ISSN = {0046-5755,1572-9168},
   MRCLASS = {20F36 (20F65)},
  MRNUMBER = {2322545},
MRREVIEWER = {Noelle\ C.\ Antony},
       DOI = {10.1007/s10711-007-9148-6},
       URL = {https://doi-org.proxy.binghamton.edu/10.1007/s10711-007-9148-6},
}

@incollection {KO22,
    AUTHOR = {Koberda, Thomas},
     TITLE = {Geometry and combinatorics via right-angled {A}rtin groups},
 BOOKTITLE = {In the tradition of {T}hurston {II}. {G}eometry and groups},
     PAGES = {475--518},
 PUBLISHER = {Springer, Cham},
      YEAR = {[2022] \copyright 2022},
      ISBN = {978-3-030-97559-3; 978-3-030-97560-9},
   MRCLASS = {20F36 (05C45 05C48 05C50 05C60 20F65)},
  MRNUMBER = {4472060},
MRREVIEWER = {Valeriy\ G.\ Bardakov},
       DOI = {10.1007/978-3-030-97560-9\_15},
       URL = {https://doi-org.proxy.binghamton.edu/10.1007/978-3-030-97560-9_15},
}

@article {BLV78,
    AUTHOR = {Bland, Robert G. and Las Vergnas, Michel},
     TITLE = {Orientability of matroids},
   JOURNAL = {J. Combinatorial Theory Ser. B},
  FJOURNAL = {Journal of Combinatorial Theory. Series B},
    VOLUME = {24},
      YEAR = {1978},
    NUMBER = {1},
     PAGES = {94--123},
      ISSN = {0095-8956},
   MRCLASS = {05B35},
  MRNUMBER = {485461},
MRREVIEWER = {Thomas\ Brylawski},
       DOI = {10.1016/0095-8956(78)90080-1},
       URL = {https://doi-org.proxy.binghamton.edu/10.1016/0095-8956(78)90080-1},
}

@article {BB1997,
    AUTHOR = {Bestvina, Mladen and Brady, Noel},
     TITLE = {Morse theory and finiteness properties of groups},
   JOURNAL = {Invent. Math.},
  FJOURNAL = {Inventiones Mathematicae},
    VOLUME = {129},
      YEAR = {1997},
    NUMBER = {3},
     PAGES = {445--470},
      ISSN = {0020-9910},
   MRCLASS = {20F36 (20J05 57M07)},
  MRNUMBER = {1465330},
MRREVIEWER = {John Meier},
       DOI = {10.1007/s002220050168},
       URL = {https://doi-org.ezproxy.library.tufts.edu/10.1007/s002220050168},
}

@article {Cai1997,
    AUTHOR = {Cai, Leizhen},
     TITLE = {On spanning {$2$}-trees in a graph},
   JOURNAL = {Discrete Appl. Math.},
  FJOURNAL = {Discrete Applied Mathematics. The Journal of Combinatorial
              Algorithms, Informatics and Computational Sciences},
    VOLUME = {74},
      YEAR = {1997},
    NUMBER = {3},
     PAGES = {203--216},
      ISSN = {0166-218X},
   MRCLASS = {05C05 (05C85)},
  MRNUMBER = {1444941},
MRREVIEWER = {T. T. Raghunathan},
       DOI = {10.1016/S0166-218X(96)00045-5},
       URL = {https://doi-org.ezproxy.library.tufts.edu/10.1016/S0166-218X(96)00045-5},
}

@article {CRKR25,
    AUTHOR = {Casals-Ruiz, Montserrat and Kazachkov, Ilya and Roy, Mallika},
     TITLE = {Presentation of kernels of rational characters of right-angled
              {A}rtin groups},
   JOURNAL = {Bull. Lond. Math. Soc.},
  FJOURNAL = {Bulletin of the London Mathematical Society},
    VOLUME = {57},
      YEAR = {2025},
    NUMBER = {7},
     PAGES = {2219--2234},
      ISSN = {0024-6093,1469-2120},
   MRCLASS = {20F05 (20E05 20F36 20K15)},
  MRNUMBER = {4936732},
       DOI = {10.1112/blms.70090},
       URL = {https://doi-org.proxy.binghamton.edu/10.1112/blms.70090},
}

@article {DR22,
    AUTHOR = {Deshpande, Priyavrat and Roy, Mallika},
     TITLE = {On the structure of finitely presented {B}estvina-{B}rady
              groups},
   JOURNAL = {Internat. J. Algebra Comput.},
  FJOURNAL = {International Journal of Algebra and Computation},
    VOLUME = {34},
      YEAR = {2024},
    NUMBER = {1},
     PAGES = {69--85},
      ISSN = {0218-1967,1793-6500},
   MRCLASS = {20F36 (08B25 20F65)},
  MRNUMBER = {4716440},
MRREVIEWER = {Valeriy\ G.\ Bardakov},
       DOI = {10.1142/s0218196724500012},
       URL = {https://doi-org.proxy.binghamton.edu/10.1142/s0218196724500012},
}

@article {dromsraag3manifolds,
    AUTHOR = {Droms, Carl},
     TITLE = {Graph groups, coherence, and three-manifolds},
   JOURNAL = {J. Algebra},
  FJOURNAL = {Journal of Algebra},
    VOLUME = {106},
      YEAR = {1987},
    NUMBER = {2},
     PAGES = {484--489},
      ISSN = {0021-8693},
   MRCLASS = {57M15 (05C25 20F05 20F32 57N10)},
  MRNUMBER = {880971},
MRREVIEWER = {R. Z. Goldstein},
       DOI = {10.1016/0021-8693(87)90010-X},
       URL = {https://doi-org.ezproxy.library.tufts.edu/10.1016/0021-8693(87)90010-X},
}

@article {kochloukovamendonontheBNSRsigmainvariantsoftheBBGs,
    AUTHOR = {Kochloukova, Dessislava Hristova and Mendon\c{c}a, Luis},
     TITLE = {On the {B}ieri--{N}eumann--{S}trebel--{R}enz
              {$\Sigma$}-invariants of the {B}estvina--{B}rady groups},
   JOURNAL = {Forum Math.},
  FJOURNAL = {Forum Mathematicum},
    VOLUME = {34},
      YEAR = {2022},
    NUMBER = {3},
     PAGES = {605--626},
      ISSN = {0933-7741},
   MRCLASS = {20J05},
  MRNUMBER = {4415959},
       DOI = {10.1515/forum-2021-0059},
       URL = {https://doi.org/10.1515/forum-2021-0059},
}

@article {DicksLeary99,
    AUTHOR = {Dicks, Warren and Leary, Ian J.},
     TITLE = {Presentations for subgroups of {A}rtin groups},
   JOURNAL = {Proc. Amer. Math. Soc.},
  FJOURNAL = {Proceedings of the American Mathematical Society},
    VOLUME = {127},
      YEAR = {1999},
    NUMBER = {2},
     PAGES = {343--348},
      ISSN = {0002-9939},
   MRCLASS = {20F36 (20F05 57M07)},
  MRNUMBER = {1605948},
MRREVIEWER = {John Meier},
       DOI = {10.1090/S0002-9939-99-04873-X},
       URL = {https://doi.org/10.1090/S0002-9939-99-04873-X},
}

@article {PapadimaSuciuAlgebraicinvariantsforBBGs,
    AUTHOR = {Papadima, Stefan and Suciu, Alexander},
     TITLE = {Algebraic invariants for {B}estvina-{B}rady groups},
   JOURNAL = {J. Lond. Math. Soc. (2)},
  FJOURNAL = {Journal of the London Mathematical Society. Second Series},
    VOLUME = {76},
      YEAR = {2007},
    NUMBER = {2},
     PAGES = {273--292},
      ISSN = {0024-6107},
   MRCLASS = {20F36 (20F14 57M07 57M27)},
  MRNUMBER = {2363416},
       DOI = {10.1112/jlms/jdm045},
       URL = {https://doi.org/10.1112/jlms/jdm045},
}

@article {PapadimaandSuciuBNSRinvariantsandHomologyJumpingLoci,
    AUTHOR = {Papadima, Stefan and Suciu, Alexander I.},
     TITLE = {Bieri-{N}eumann-{S}trebel-{R}enz invariants and homology
              jumping loci},
   JOURNAL = {Proc. Lond. Math. Soc. (3)},
  FJOURNAL = {Proceedings of the London Mathematical Society. Third Series},
    VOLUME = {100},
      YEAR = {2010},
    NUMBER = {3},
     PAGES = {795--834},
      ISSN = {0024-6115},
   MRCLASS = {55N25 (20F65 20J05)},
  MRNUMBER = {2640291},
MRREVIEWER = {Brita E. A. Nucinkis},
       DOI = {10.1112/plms/pdp045},
       URL = {https://doi.org/10.1112/plms/pdp045},






}

@article {LearySaadetogluTheCohomologyofBBGs,
    AUTHOR = {Leary, Ian J. and Saadeto\u{g}lu, M\"{u}ge},
     TITLE = {The cohomology of {B}estvina-{B}rady groups},
   JOURNAL = {Groups Geom. Dyn.},
  FJOURNAL = {Groups, Geometry, and Dynamics},
    VOLUME = {5},
      YEAR = {2011},
    NUMBER = {1},
     PAGES = {121--138},
      ISSN = {1661-7207},
   MRCLASS = {57M07 (20F36)},
  MRNUMBER = {2763781},
MRREVIEWER = {Nicholas A. Koban},
       DOI = {10.4171/GGD/118},
       URL = {https://doi.org/10.4171/GGD/118},
}

@article {lorenzo,
    AUTHOR = {Barquinero, Enrique Miguel and Ruffoni, Lorenzo and Ye, Kaidi},
     TITLE = {Graphical splittings of {A}rtin kernels},
   JOURNAL = {J. Group Theory},
  FJOURNAL = {Journal of Group Theory},
    VOLUME = {24},
      YEAR = {2021},
    NUMBER = {4},
     PAGES = {711--735},
      ISSN = {1433-5883},
   MRCLASS = {20F36 (20F65)},
  MRNUMBER = {4279130},
       DOI = {10.1515/jgth-2020-0124},
       URL = {https://doi-org.ezproxy.library.tufts.edu/10.1515/jgth-2020-0124},
}

@article {DromsIsomorphismsofGraphGroups,
    AUTHOR = {Droms, Carl},
     TITLE = {Isomorphisms of graph groups},
   JOURNAL = {Proc. Amer. Math. Soc.},
  FJOURNAL = {Proceedings of the American Mathematical Society},
    VOLUME = {100},
      YEAR = {1987},
    NUMBER = {3},
     PAGES = {407--408},
      ISSN = {0002-9939},
   MRCLASS = {20F05 (05C25 20F12)},
  MRNUMBER = {891135},
MRREVIEWER = {R. St\"{o}hr},
       DOI = {10.2307/2046419},
       URL = {https://doi-org.ezproxy.wesleyan.edu/10.2307/2046419},
}

@article {ChangJSJofBBGs,
    AUTHOR = {Chang, Yu-Chan},
     TITLE = {Abelian splittings and {JSJ}-decompositions of finitely
              presented {B}estvina-{B}rady groups},
   JOURNAL = {J. Group Theory},
  FJOURNAL = {Journal of Group Theory},
    VOLUME = {26},
      YEAR = {2023},
    NUMBER = {4},
     PAGES = {677--692},
      ISSN = {1433-5883,1435-4446},
   MRCLASS = {20F65 (20E06 20F36)},
  MRNUMBER = {4609850},
MRREVIEWER = {Sam\ Shepherd},
       DOI = {10.1515/jgth-2021-0231},
       URL = {https://doi-org.proxy.lib.ohio-state.edu/10.1515/jgth-2021-0231},
}

@article {DimacaPapadimaSuciuQuasiKahlerBBGs,
    AUTHOR = {Dimca, Alexandru and Papadima, Stefan and Suciu, Alexander I.},
     TITLE = {Quasi-{K}\"{a}hler {B}estvina-{B}rady groups},
   JOURNAL = {J. Algebraic Geom.},
  FJOURNAL = {Journal of Algebraic Geometry},
    VOLUME = {17},
      YEAR = {2008},
    NUMBER = {1},
     PAGES = {185--197},
      ISSN = {1056-3911},
   MRCLASS = {20F65},
  MRNUMBER = {2357684},
MRREVIEWER = {Eddy Godelle},
       DOI = {10.1090/S1056-3911-07-00463-8},
       URL = {https://doi.org/10.1090/S1056-3911-07-00463-8},
}

@article {Dirac1961,
    AUTHOR = {Dirac, G. A.},
     TITLE = {On rigid circuit graphs},
   JOURNAL = {Abh. Math. Sem. Univ. Hamburg},
  FJOURNAL = {Abhandlungen aus dem Mathematischen Seminar der Universit\"at
              Hamburg},
    VOLUME = {25},
      YEAR = {1961},
     PAGES = {71--76},
      ISSN = {0025-5858,1865-8784},
   MRCLASS = {05.40},
  MRNUMBER = {130190},
MRREVIEWER = {F.\ Harary},
       DOI = {10.1007/BF02992776},
       URL = {https://doi.org/10.1007/BF02992776},
}

@article {Bridson2020,
    AUTHOR = {Bridson, Martin R.},
     TITLE = {On the recognition of right-angled {A}rtin groups},
   JOURNAL = {Glasg. Math. J.},
  FJOURNAL = {Glasgow Mathematical Journal},
    VOLUME = {62},
      YEAR = {2020},
    NUMBER = {2},
     PAGES = {473--475},
      ISSN = {0017-0895,1469-509X},
   MRCLASS = {20F36 (20F10)},
  MRNUMBER = {4085052},
MRREVIEWER = {Mohammad\ N.\ Abdulrahim},
       DOI = {10.1017/s0017089519000235},
       URL = {https://doi.org/10.1017/s0017089519000235},
}

@article {GO96,
    AUTHOR = {Gutierrez, M. and Oubi\~na, L.},
     TITLE = {Metric characterizations of proper interval graphs and
              tree-clique graphs},
   JOURNAL = {J. Graph Theory},
  FJOURNAL = {Journal of Graph Theory},
    VOLUME = {21},
      YEAR = {1996},
    NUMBER = {2},
     PAGES = {199--205},
      ISSN = {0364-9024,1097-0118},
   MRCLASS = {05C75 (05C05)},
  MRNUMBER = {1368745},
       DOI = {10.1002/(SICI)1097-0118(199602)21:2<199::AID-JGT9>3.0.CO;2-M},
       URL =
              {https://doi-org.proxy.binghamton.edu/10.1002/(SICI)1097-0118(199602)21:2<199::AID-JGT9>3.0.CO;2-M},
}

@article {BDCV98,
    AUTHOR = {Brandst\"adt, Andreas and Dragan, Feodor and Chepoi, Victor
              and Voloshin, Vitaly},
     TITLE = {Dually chordal graphs},
   JOURNAL = {SIAM J. Discrete Math.},
  FJOURNAL = {SIAM Journal on Discrete Mathematics},
    VOLUME = {11},
      YEAR = {1998},
    NUMBER = {3},
     PAGES = {437--455},
      ISSN = {0895-4801,1095-7146},
   MRCLASS = {05C75 (05C65 68R10)},
  MRNUMBER = {1628114},
MRREVIEWER = {Valentin\ E.\ Brimkov},
       DOI = {10.1137/S0895480193253415},
       URL = {https://doi-org.proxy.binghamton.edu/10.1137/S0895480193253415},
}

@article {LCC07,
    AUTHOR = {Lin, Ching-Chi and Chang, Gerard J. and Chen, Gen-Huey},
     TITLE = {Locally connected spanning trees in strongly chordal graphs
              and proper circular-arc graphs},
   JOURNAL = {Discrete Math.},
  FJOURNAL = {Discrete Mathematics},
    VOLUME = {307},
      YEAR = {2007},
    NUMBER = {2},
     PAGES = {208--215},
      ISSN = {0012-365X,1872-681X},
   MRCLASS = {05C62 (05C05 05C85 68Q25)},
  MRNUMBER = {2285191},
MRREVIEWER = {Terry\ A.\ McKee},
       DOI = {10.1016/j.disc.2006.06.026},
       URL = {https://doi-org.proxy.binghamton.edu/10.1016/j.disc.2006.06.026},
}

@article {SB94,
    AUTHOR = {Szwarcfiter, Jayme L. and Bornstein, Claudson F.},
     TITLE = {Clique graphs of chordal and path graphs},
   JOURNAL = {SIAM J. Discrete Math.},
  FJOURNAL = {SIAM Journal on Discrete Mathematics},
    VOLUME = {7},
      YEAR = {1994},
    NUMBER = {2},
     PAGES = {331--336},
      ISSN = {0895-4801},
   MRCLASS = {05C05 (05C12 05C85)},
  MRNUMBER = {1272006},
MRREVIEWER = {Ko-Wei\ Lih},
       DOI = {10.1137/S0895480191223191},
       URL = {https://doi-org.proxy.binghamton.edu/10.1137/S0895480191223191},
}

@article {BL25,
    AUTHOR = {Blumer, S.},
     TITLE = {Subgroups of {B}estvina-{B}rady groups},
   JOURNAL = {J. Pure Appl. Algebra},
  FJOURNAL = {Journal of Pure and Applied Algebra},
    VOLUME = {229},
      YEAR = {2025},
    NUMBER = {10},
     PAGES = {Paper No. 108080, 18},
      ISSN = {0022-4049,1873-1376},
   MRCLASS = {20F65 (12F12 17B70 20F36 20J06)},
  MRNUMBER = {4954383},
       DOI = {10.1016/j.jpaa.2025.108080},
       URL = {https://doi.org/10.1016/j.jpaa.2025.108080},
}

@article {duallychordal2012,
    AUTHOR = {De Caria, Pablo and Gutierrez, Marisa},
     TITLE = {On minimal vertex separators of dually chordal graphs:
              properties and characterizations},
   JOURNAL = {Discrete Appl. Math.},
  FJOURNAL = {Discrete Applied Mathematics. The Journal of Combinatorial
              Algorithms, Informatics and Computational Sciences},
    VOLUME = {160},
      YEAR = {2012},
    NUMBER = {18},
     PAGES = {2627--2635},
      ISSN = {0166-218X,1872-6771},
   MRCLASS = {05C40 (05C75)},
  MRNUMBER = {2971345},
       DOI = {10.1016/j.dam.2012.02.022},
       URL = {https://doi-org.proxy.lib.ohio-state.edu/10.1016/j.dam.2012.02.022},
}

@book {Oxley,
    AUTHOR = {Oxley, James},
     TITLE = {Matroid theory},
    SERIES = {Oxford Graduate Texts in Mathematics},
    VOLUME = {21},
   EDITION = {Second},
 PUBLISHER = {Oxford University Press, Oxford},
      YEAR = {2011},
     PAGES = {xiv+684},
      ISBN = {978-0-19-960339-8},
   MRCLASS = {05-01 (05B35 90C27)},
  MRNUMBER = {2849819},
MRREVIEWER = {Maruti\ M.\ Shikare},
       DOI = {10.1093/acprof:oso/9780198566946.001.0001},
       URL = {https://doi-org.proxy.lib.ohio-state.edu/10.1093/acprof:oso/9780198566946.001.0001},
}

@book {orientedmatroids,
    AUTHOR = {Bj\"orner, Anders and Las Vergnas, Michel and Sturmfels, Bernd
              and White, Neil and Ziegler, G\"unter M.},
     TITLE = {Oriented matroids},
    SERIES = {Encyclopedia of Mathematics and its Applications},
    VOLUME = {46},
   EDITION = {Second},
 PUBLISHER = {Cambridge University Press, Cambridge},
      YEAR = {1999},
     PAGES = {xii+548},
      ISBN = {0-521-77750-X},
   MRCLASS = {52B40 (05B35 52C35)},
  MRNUMBER = {1744046},
       DOI = {10.1017/CBO9780511586507},
       URL = {https://doi-org.proxy.lib.ohio-state.edu/10.1017/CBO9780511586507},
}

@article {oxleycrenshaw,
    AUTHOR = {Crenshaw, Cameron and Oxley, James},
     TITLE = {Ordering circuits of matroids},
   JOURNAL = {Electron. J. Combin.},
  FJOURNAL = {Electronic Journal of Combinatorics},
    VOLUME = {29},
      YEAR = {2022},
    NUMBER = {4},
     PAGES = {Paper No. 4.31, 28},
      ISSN = {1077-8926},
   MRCLASS = {05B35},
  MRNUMBER = {4511331},
MRREVIEWER = {Laura\ Bertani},
       DOI = {10.37236/11117},
       URL = {https://doi-org.proxy.lib.ohio-state.edu/10.37236/11117},
}

@book {Laura2025,
    AUTHOR = {Anderson, Laura},
     TITLE = {Oriented matroids},
    SERIES = {Cambridge Studies in Advanced Mathematics},
    VOLUME = {216},
 PUBLISHER = {Cambridge University Press, Cambridge},
      YEAR = {2025},
     PAGES = {xii+321},
      ISBN = {9-781-009-49411-3; [9781009494076]},
   MRCLASS = {52C40 (05B35)},
  MRNUMBER = {4880415},
}

@article {DehnFunc26,
    AUTHOR = {Chang, Yu-Chan and Garc\'ia-Mej\'ia, Jer\'onimo and
              Migliorini, Matteo},
     TITLE = {Complete classification of the {D}ehn functions of
              {B}estvina-{B}rady groups},
   JOURNAL = {Geom. Funct. Anal.},
  FJOURNAL = {Geometric and Functional Analysis},
    VOLUME = {36},
      YEAR = {2026},
    NUMBER = {1},
     PAGES = {1--58},
      ISSN = {1016-443X,1420-8970},
   MRCLASS = {20F69 (20F05 20F38 20F65 51F30)},
  MRNUMBER = {5024899},
       DOI = {10.1007/s00039-026-00731-7},
       URL = {https://doi-org.proxy.lib.ohio-state.edu/10.1007/s00039-026-00731-7},
}

@article {SplittingBBG25,
    AUTHOR = {Chang, Yu-Chan},
     TITLE = {A note on the splittings of finitely presented
              {B}estvina-{B}rady groups},
   JOURNAL = {Glasg. Math. J.},
  FJOURNAL = {Glasgow Mathematical Journal},
    VOLUME = {67},
      YEAR = {2025},
    NUMBER = {2},
     PAGES = {224--227},
      ISSN = {0017-0895,1469-509X},
   MRCLASS = {20F65 (20E06 20E08)},
  MRNUMBER = {4883972},
MRREVIEWER = {Jens\ Harlander},
       DOI = {10.1017/S0017089524000338},
       URL = {https://doi-org.proxy.lib.ohio-state.edu/10.1017/S0017089524000338},
}

@misc{jialin,
      title={Automorphisms of Bestvina-Brady Groups: IA Rigidity, Arithmetic Commensurability, and Finiteness}, 
      author={Jialin Lei},
      year={2026},
      eprint={2607.23380},
      archivePrefix={arXiv},
      primaryClass={math.GR},
      url={https://arxiv.org/abs/2607.23380}, 
      note={(Revised version in preparation)}
}

@article {DavisOkun,
    AUTHOR = {Davis, Michael W. and Okun, Boris},
     TITLE = {Cohomology computations for {A}rtin groups, {B}estvina-{B}rady
              groups, and graph products},
   JOURNAL = {Groups Geom. Dyn.},
  FJOURNAL = {Groups, Geometry, and Dynamics},
    VOLUME = {6},
      YEAR = {2012},
    NUMBER = {3},
     PAGES = {485--531},
      ISSN = {1661-7207,1661-7215},
   MRCLASS = {20J06 (20F36 20F55)},
  MRNUMBER = {2961283},
MRREVIEWER = {Aditi\ Kar},
       DOI = {10.4171/GGD/164},
       URL = {https://doi-org.proxy.lib.ohio-state.edu/10.4171/GGD/164},
}

\end{document}